\def\R{{\Bbb R}}
\def\Om{\Omega}
\def\ep{\epsilon}
\def\s{\sigma}
\def\f{\frac}
\def\p{\partial}
\def\na{\nabla}
\def\n{\mathbf{n}}
\def\r{\mathbf{r}}
\def\x{\mathbf{x}}
\def\y{\mathbf{y}}
\def\z{\mathbf{z}}
\def\B{\mathbf{B}}
\def\E{\mathbf{E}}
\def\J{\mathbf{J}}
\def\mE{{\mathcal E}}
\def\naxy{\na_{\mbox{\tiny xy}}}
\def\Deltaxy{\Delta_{\mbox{\tiny xy}}}
\def\nax1x2{\na_{x_1x_2}}
\def\nay1y2{\na_{y_1y_2}}
\def\nax{\na_{\tiny\x}}
\def\nay{\na_{\tiny\y}}
\def\eref#1{(\ref{#1})}
\newtheorem{remark}{Remark}[section]
\title{Convergence analysis of the harmonic $B_z$ algorithm with single injection current in MREIT
\thanks{Y Song was supported by Shandong Provincial Outstanding Youth Fund (No. ZR2018JL002), NSFC(No.11501336) and the China Postdoctoral Science Foundation (2019T120604, 2018M630795). R Sadleir was supported by the National Institute of Mental Health under grant RF1-114290. J Liu was supported by NSFC (No.11971104). }
}
\author{Yizhuang Song\thanks{School of Mathematics and Statistics, Center for Post-doctoral Studies of Management Science and Engineering, Shandong Normal University, Jinan, 250014, P.R.China.}
\and Rosalind Sadleir\thanks{School of Biological and Health Systems Engineering, Arizona State University, Tempe, AZ, 85287-9709, USA.}
\and Jijun Liu\thanks{Corresponding author. School of Mathematics, Southeast University, Nanjing, 210096, P.R.China. (e-mail: jjliu@seu.edu.cn).\quad Nanjing Center for Applied Mathematics, Nanjing, 211135, P.R.China.}
}
\begin{document}
\maketitle

\begin{abstract}
Magnetic resonance electrical impedance tomography (MREIT) aims to recover the electrical conductivity distribution of an object using  partial information of magnetic flux densities inside the tissue which can be measured using an MRI scanner, with the advantage that a higher spatial resolution of conductivity image can be provided than existing EIT techniques involving surface measurements. Traditional MREIT reconstruction algorithms use two data sets obtained with two linearly independent injected currents. However, injection of two currents is often not possible in applications such as transcranial electrical stimulation. Recently, we proposed an iterative conductivity reconstruction algorithm called the single current harmonic $B_z$ algorithm that demonstrated satisfactory performance in numerical and phantom tests. In this paper, we provide a rigorous mathematical analysis of the convergence of the iterative sequence for realizing this algorithm. We prove that, applying some mild conditions on the exact conductivity, the iterative sequence converges to the true solution within an explicit error bound. Such theoretical results substantiate the reasonability and efficiency of the proposed algorithm. We also provide more numerical evidence to validate these theoretical results.

\end{abstract}
\begin{keywords}
Inverse problems, biomedical imaging, MREIT, single injection current, harmonic $B_z$ algorithm, iteration, convergence, numerics.
\end{keywords}

\begin{AMS}
35R30, 35J61, 35Q61
\end{AMS}

\pagestyle{myheadings}
\thispagestyle{plain}
\markboth{Y. Song, R. Sadleir and J. Liu}{ Convergence of the single current MREIT}

\section{Introduction}

The electrical conductivity $\sigma$ and permittivity $\epsilon$ of biological tissues are fundamental indices of tissue state, being influenced by molecular composition, intra- and extra-cellular fluid balance, ionic composition and frequency, amongst other factors. Tissue electrical properties are significantly different in different pathological and physiological states including ischemia, hemorrhage, edema, inflammation, cancer and neural activity \cite{Liu2017,SeoWoo2011,SeoWoo2014,Widlak2012,WooSeo2008}.  Therefore, abnormal distributions of conductivity and permittivity can reveal early pathological changes in biological tissue that are of potential importance for medical diagnoses. Electrical property imaging aims to extract the tomographic conductivity and permittivity distributions of biological tissue by measuring magnetic fields resulting from an external current field applied to it. Detailed images of conductivity distributions may be obtained and can be examined against standard MRI images.

Depending on the method used to generating the external electrical field $\E$ and measuring the corresponding magnetic responses, there are several modalities to visualize the electrical tissue properties \cite{SeoWoo2011,Song2013a}.
Magnetic resonance electrical impedance tomography (MREIT) is a recently developed imaging technique which is capable of providing us a higher spatial resolution conductivity image at low frequency. In MREIT, pairs of electrodes are typically attached to the surface of the imaged object and the object with electrodes are placed into the bore of an MRI scanner. To reconstruct the conductivity distribution, we assume a sinusoidal current $I\sin(\omega t)$  ($\omega/2\pi \leq 1$ kHz) is injected into the object through the electrodes. The injected current will induce a current density $\J=(J_x,J_y,J_z)$ and a magnetic flux density $\B=(B_x,B_y,B_z)$ inside the object. If the direction of the main magnetic field is parallel to the $z$-axis, $B_z$, the $z$-component of $\B$, can be measured from MRI phase data \cite{SeoWoo2011,WooSeo2008}. The inverse problem for MREIT is to reconstruct the conductivity distribution from the measured $B_z$ data.

This inverse problem is ill-posed in the sense of non-uniqueness of the reconstruction and non-stability of the reconstruction process, if no restriction on the configuration is specified. More precisely, two different distributions of tissue conductivity could produce the same $B_z$ if there are no restrictions on the conductivity. To handle this non-uniqueness, existing reconstruction methods involve obtaining two sets of magnetic field data, using two independent injected currents delivered through two pairs of surface electrodes. These algorithms include the harmonic \cite{OhLeeWooetal2003,Seo2003} and non-iterative harmonic $B_z$ algorithms \cite{JeonLeeWoo2017,SeoJeonLeeWoo2011}, and algorithms involving an intermediate step approximating current densities from the $B_z$ data before reconstruction \cite{GaoHe2008,JeonLeeWoo2017,NamParkKwon2008,OranIder2012,ParkLeeKwon2007a}. The readers are refereed to \cite{SeoWoo2011,SeoWoo2014} for a review of existing reconstruction algorithms using two injected currents.

Since it takes a long time to measure data for two magnetic fields, the temporal resolution of biological tissue imaging using this configuration is severely affected by using two-current methods. In addition, it is cumbersome and impractical to attach two pairs of electrodes in some clinical applications including transcranial electrical stimulation \cite{Bikson2019}. For these reasons, although we could accelerate the data acquisition through sub-sampling the time-consuming phase encoding process to improve the temporal resolution \cite{Song2016,SongAmmariSeo2017,SongSadleir2018}, the most efficient reconstruction algorithm is to exploit the $B_z$ set produced by only one injection current. This approach has been used in \cite{Lee2010,SongSadleir2020}, to develop MREIT reconstruction methods that may be more suitable for practical clinical implementation.

One of the crucial issues in MREIT imaging algorithm using single injection currents is to handle the non-uniquess of the conductivity imaging model. Fortunately, the uniqueness for two-dimensional simplified model can still be ensured under an {\it a-priori} assumption  that the conductivity in the object boundary is known \cite{ParkLeeKwon2007a}. The proof is based on the uniqueness of a linear boundary value problem with respect to $\ln\sigma$ for the hyperbolic partial differential equation (PDE)
\begin{equation}\label{linear hyperbolic0}
  \widetilde \J^\bot \cdot \naxy \ln \sigma = - \f{1}{\mu_0}\Deltaxy B_z
\end{equation}
in two-dimensional imaging object for known internal current $\widetilde \J=(J_x,J_y)$, which can be determined directly from $B_z$.
Here, $\naxy=(\partial_x,\partial_y)$ and $\Deltaxy = \f{\p^2}{\p x^2}+\f{\p^2}{\p y^2}$ represent the two-dimensional gradient and Laplacian operators, respectively and $\cdot^\bot$ represents the anticlockwise right-angle rotation of a 2-dimensional vector, i.e., $\widetilde \J^\bot = (J_y,-J_x)$.

However, a stable reconstruction algorithm based on solving this first order PDE remains to be determined, due to the numerical instability of computing  the Laplacian operator on $B_z$, given that our practical inversion input data are $B_z$ instead of $\Deltaxy B_z$. The treatments of this instability for MREIT models using two injection currents can be found in \cite{XuLiu2010,XuLiu2012}.
A plausible numerical way to the solution of \eref{linear hyperbolic0} could be via the finite element or finite difference methods. Values of $B_z$ must be determined on an extremely fine mesh to obtain accurate estimates of $\widetilde\J^\bot$ and $\Deltaxy B_z$ using differential computations. However, the measured $B_z$ input data  are only available at a relatively coarse resolution. Without using adaptive refining meshes, once there exist some discontinuities in $\ln \sigma$ or there is a mismatch with the boundary values, the numerical solution for $\sigma$  from \eref{linear hyperbolic0} will be severely degraded because of the Gibbs phenomenon \cite{Lax2006,OranIder2012}. The other implementable way to the solution of \eref{linear hyperbolic0} is the method of characteristic lines for PDEs. However, noise and numerical errors will propagate along characteristic lines, and severe artifacts could occur near their ends \cite{Nachman2007}, which prevents us from recovering accurate $\sigma$ inside the object using $B_z$ and known values of $\sigma$ in the boundary.


In \cite{SongSadleir2020}, we proposed a reconstruction algorithm, called the single current harmonic $B_z$ algorithm, to solve the two-dimensional first order linear hyperbolic PDE \eref{linear hyperbolic0} with respect to $\ln\sigma$ to determine the conductivity using a single $B_z$ data set. In this novel algorithm, we take advantage of the forward model $\naxy\cdot\widetilde\J=0$ to track the change of conductivity along the direction of the current density. The numerical iteration scheme implemented and phantom experiments showed this algorithm was successful.  However, a strict mathematical theory regarding the convergence property as well as the convergence rate of the iterative process for approximating $\sigma$ has not yet been given. This is essential to develop efficient implementations and better quantitative evaluations for this novel algorithm.

In this paper, we provide a rigorous mathematical analysis for the convergence of the harmonic $B_z$ algorithm with single injection current for a simplified two-dimensional model, together with the error estimates on the iteration process. To this end, we firstly show that a cylindrical three-dimensional object with infinite length under some physical configurations can be transformed into a two-dimensional model for which we will consider the uniqueness of the inverse problem and the convergence of iterative scheme for the conductivity reconstruction. It should be emphasized that the corresponding results for the MREIT model with one injection current and a general three-dimensional object remain to be found. We prove that, under some mild condition on the true conductivity, the iterative sequence for our single injection current imaging model converges to the true conductivity in the space of $C^1(\Om)$, where $\Om$ represents the 2-dimensional imaging object. Numerical simulations are also presented to validate the convergence findings.

We arrange this paper as follows. In section 2, we briefly introduce the inverse problem model for MREIT with one-current injection, and then derive a 2-dimensional model for which we state the uniqueness of the conductivity reconstruction. In section 3, we review the single current harmonic $B_z$ algorithm proposed in \cite{SongSadleir2020} and establish our main result, the convergence analysis of this algorithm together with the error estimate, which gives a quantitative description of the inversion algorithm. In section 4, we validate the proposed theory for a two-dimensional toy model, the Shepp-Logan model and a more practical CT model. Some conclusions and  possible future research topics are finally stated in section 5.

\section{Two-dimensional MREIT model from one injected current}

In this section, we introduce the mathematical model of MREIT using $B_z$ data resulting from injection of one external current from the object boundary, and then state the recently developed iterative algorithm for a two-dimensional MREIT model, for which we will prove the convergence property in the next section.

\subsection{Problem formulation: from a three-dimensional to a two-dimensional model}
Assume that the imaged conductive object occupies a bounded three-dimensional domain $\Om\subset \R^3$ with a smooth boundary $\partial\Omega$. The parameter of the object to be reconstructed is the conductivity $\sigma$. To this end, MREIT technique excites the conductive object using an externally injected current, and then measures the corresponding response $B_z$ inside the medium \cite{SeoWoo2014}.

In MREIT, we attach a pair of surface electrodes $\mE^\pm$ to $\p\Om$ and place the object and electrodes into the bore of an MRI scanner as shown in Figure \ref{Fig:attach_electrode}.
We inject a sinusoidal current $i(t) = I\sin \omega t$ through the electrodes, where the angular frequency $\omega$ satisfies $0 \leq \frac{\omega}{2\pi}\leq $a few kilohertz \cite{SeoWoo2011,WooSeo2008}.
\begin{figure}[h]
\onecolumn
\centering
\subfigure[]{
\includegraphics[width=4.2cm,height=3.7cm]{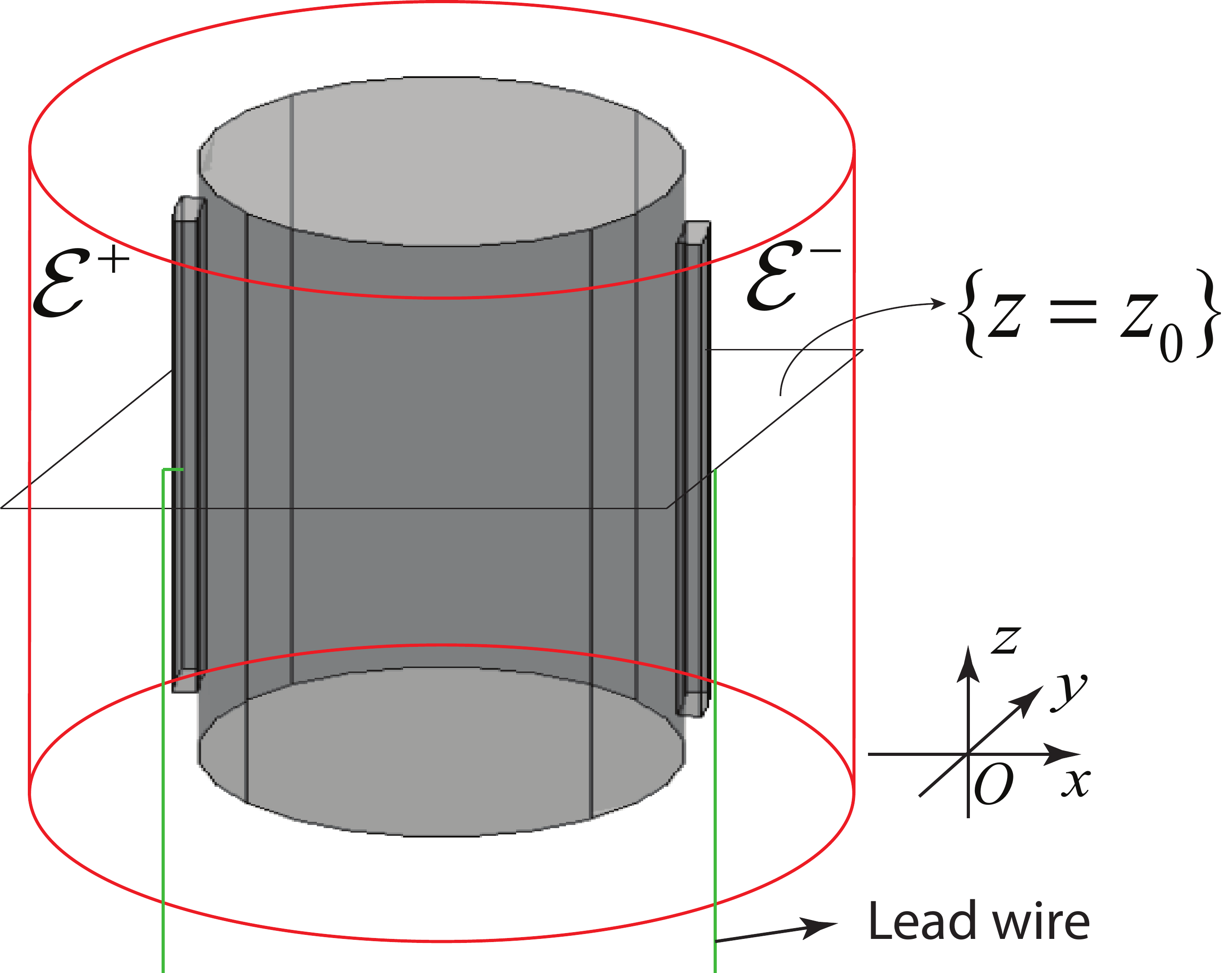}}
~\subfigure[]{
\includegraphics[width=6.5cm,height=3cm]{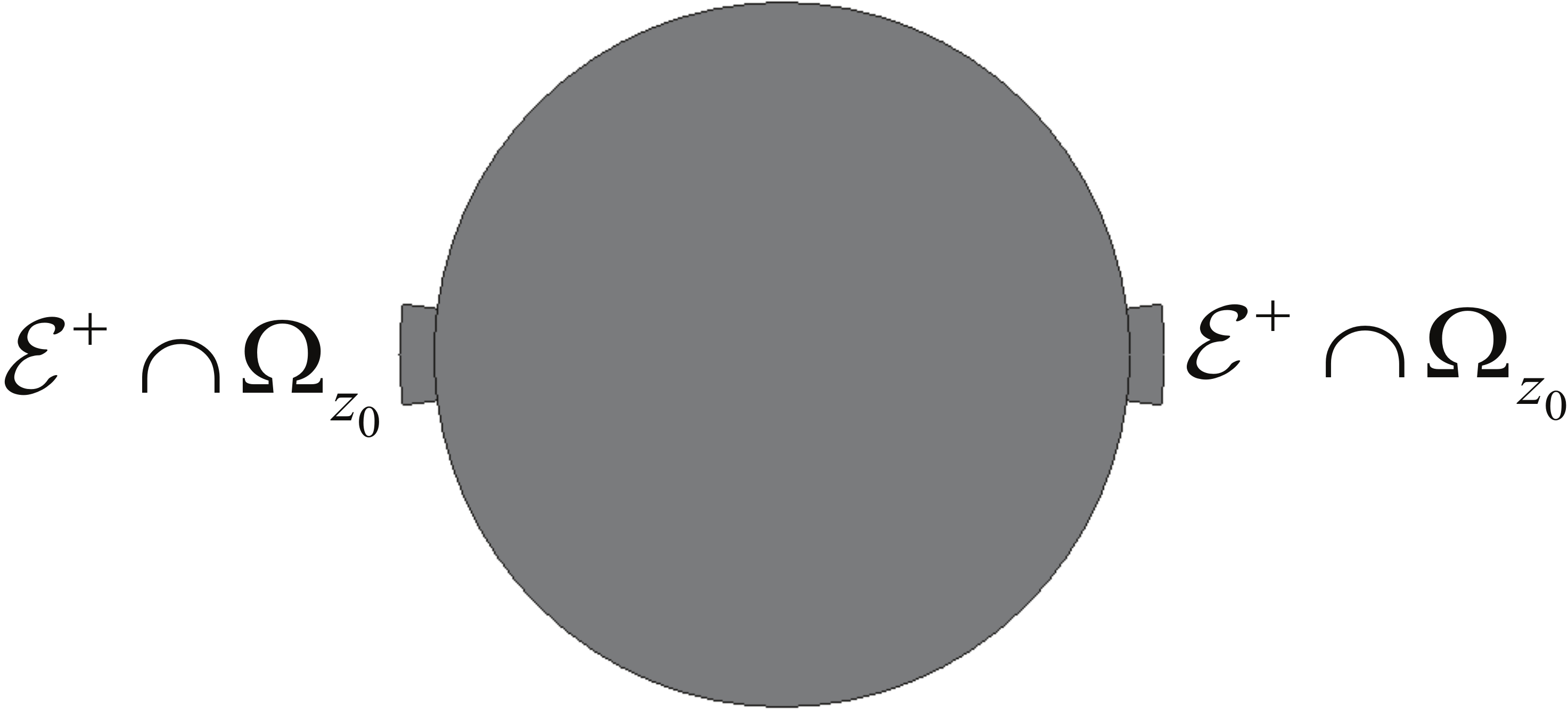}}
\caption{(a) Illustration of three-dimensional cylindrical model with surface electrodes $\mE^\pm$ on the boundary of $\Om$. (b) two-dimensional cross section $\Om_{z_0}$ of the three-dimensional cylindrical model $\Omega$ in (a).}\label{Fig:attach_electrode}
\end{figure}
Then the injected current will induce an electrical flux density $\J=(J_x,J_y,J_z)$, an electrical potential $u$ and a magnetic flux density $\B=(B_x,B_y,B_z)$ within the conductive medium $\Om$. The voltage potential $u = u[\sigma](\r)$ for $\r:=(x,y,z)$ is governed by the following partial differential equation with mixed nonlocal boundary conditions
\begin{eqnarray}\label{eq:forwardPDE}
    \begin{cases}
      \na\cdot (\sigma \na u) = 0 \qquad \mbox{in }\Om, \\
      \int_{\mE^+}\sigma \frac{\p u}{\p \n }dS = I = -\int_{\mE^-}\sigma \frac{\p u}{\p \n }dS, \\
      \na u\times \n|_{\mE^\pm} = {\bf 0}, \\
      \left.\sigma \frac{\p u}{\p \n }\right|_{\p \Om \setminus \overline{\mE^+\cup \mE^-}} = 0,
    \end{cases}
\end{eqnarray}
where $\n$ is the unit outer normal vector on $\p \Om$ and $dS$ is a surface area element. It has been proven that there exists a unique solution to \eref{eq:forwardPDE} under the extra restriction
\begin{equation}\label{liu21}
  u|_{\mE^-} = 0,
\end{equation}
see  \cite{Somersalo1992}.
Note that condition (\ref{liu21}) can be easily achieved in practical experiment by connecting $\mE^-$ to ground. Moreover, the unique solution can be represented by the solution to a PDE with mixed local boundary conditions \cite{Liu2010}.

Assume that the main magnetic field direction of the MRI scanner is parallel to the $z$-axis. Only the $z$-component of the magnetic flux density $\B$, $B_z$, can be measured from the MRI scanner. This partial magnetic field information will be our inversion input data for recovering $\sigma$.
The inverse problem for MREIT imaging is to reconstruct $\sigma$ based on the implicit relation between $\sigma$ and $B_z$ described by the Biot-Savart law \cite{Stratton1941}, that is,
\begin{eqnarray}\label{biot-savart law}
  B_z(\r) &=& \frac{\mu_0}{4\pi}\int_\Om \frac{\langle \r-\r',-\sigma(\r')\na u(\r')\times \hat \z \rangle}{|\r-\r'|^3}d\r'+\mathcal{H}(\r)
  \nonumber\\&=&
  \mu_0\int_\Om\langle\nabla_{\r'}\Phi(\r-\r'), -\sigma(\r')\na u(\r')\times \hat \z \rangle d\r'+\mathcal{H}(\r)
  \nonumber\\&=&
  \mu_0\int_{\mathbb{R}^3}\Phi(\r-\r')\nabla_{\r'}\cdot(\sigma(\r')\na u(\r')\times \hat \z) d\r'+\mathcal{H}(\r)
\end{eqnarray}
for $\r= (x,y,z)\in \Om$. The last identity in \eref{biot-savart law} is due to the fact that the electrical current ${\bf J}=-\sigma(\r)\na u(\r)$ is compactly supported in $\Om$,
$\Phi(\r-\r'):=-\frac{1}{4\pi|\r-\r'|}$ represents the fundamental solution of Laplacian operator in three-dimensional space, $\hat \z = (0,0,1)$
and $\mu_0 = 4\pi\times 10^{-7} $ H/m is the magnetic permeability of free space.
Note that the convolution is interpreted in the sense of distributions, since $\na\times{\bf J}$ is in general a distribution supported in $\overline{\Om}$.

Effects of stray magnetic fields $\mathcal{H}$ will always exist in the measured $B_z$ data from the currents in the lead wires and electrodes. Hence, it is difficult to reconstruct $\sigma$ directly from $B_z$ data because of the unknown field $\mathcal{H}$.
Fortunately, since $\mathcal{H}$ is harmonic, if the Laplacian operator is applied to both sides of
\eref{biot-savart law}, the uncertain effects of $\mathcal{H}$ will be removed.
Consequently,
most existing MREIT reconstruction methods determine $\sigma$ by applying a three-dimensional Laplacian operator on both sides of \eref{biot-savart law} to obtain
$$\Delta B_z(\r)=\mu_0\int_{\mathbb{R}^3}\delta(\r-\r')\nabla_{\r'}\cdot(\sigma(\r')\na u(\r')\times \hat \z) d\r'=\mu_0\nabla_{\r}\cdot(\sigma(\r)\na u(\r)\times \hat \z),$$
where $\delta$ is the Dirac delta function. Hence, we obtain the following first order hyperbolic equation
\begin{equation}\label{nonlinear hyperbolic}
  \naxy^\bot u\cdot \naxy\s \equiv\p_x(\sigma u_y)-\p_y(\sigma u_x)= \f{1}{\mu_0}\Delta B_z
\end{equation}
in each two-dimensional slice $\Om_{z_0} = \Om\cap \{z=z_0\}$, where $\naxy^\bot:=(\p_y, -\p_x)$.
Obviously, (\ref{nonlinear hyperbolic}) can be written as
\begin{equation}\label{liu15}
(\s\p_y u,-\s\p_xu)\cdot\naxy\ln\s=\f{1}{\mu_0}\Delta B_z
\end{equation}
in each two-dimensional slice $\Om_{z_0}$.
We note that as $u$ depends on $\sigma$ nonlinearly, equation \eref{liu15} is a nonlinear differential equation with respect to $\sigma$. Using Ohm's law $\J = -\sigma\na u$, we can transform the equation \eref{nonlinear hyperbolic} to be a first order linear hyperbolic equation \eref{linear hyperbolic0} with respect to $\ln\sigma$ in two-dimensional or a special three-dimensional cylindrical case. This is because $\J$ can be directly determined from $B_z$ by an explicit process in a two-dimensional MREIT model without knowledge of $\sigma$. We will state the determination of $\J$ from a given data $B_z$ in Theorem \ref{SolvingJ}.

Next we will show that the reconstruction problem in a special three-dimensional cylinder can essentially be simplified into a two-dimensional case.
Consider an infinite three-dimensional cylinder aligned with the $z$-direction namely
$$\Omega:=\{{\bf r}=(x,y,z): (x,y)=r(\theta)(\cos\theta,\sin\theta)\hbox{ for }\theta\in [0,2\pi],\; z\in \mathbb{R}^1\}\subset \mathbb{R}^3$$
with $2\pi-$periodic polar radius $r(\theta)>0$, and we set the electrode pairs $\mE^{\pm}$ to have infinite length along the $z-$direction. Assume that the conductivity in $\Om$ is uniform along $z$-direction, i.e., $\s({\bf r})\equiv\s(x,y)$, and the injection current $g := \sigma\nabla u\cdot {\bf n}$ imposed on $\p\Om$ is also independent of $z$.  Then both the solution $u(\bf r)$ to (\ref{eq:forwardPDE}) and the corresponding $\B(\bf r)$ is independent of $z$.  In this case, for any $g(x,y)\in H^{-1/2}_\diamond(\mE^{\pm})$ with a two-dimensional slice defined as
$$\Omega_{z_0}:=\{(x,y,z_0):(x,y)=r(\theta)(\cos\theta,\sin\theta)\hbox{ for }\theta\in [0,2\pi]\}\subset \mathbb{R}^2$$
and
$$H_\diamond^{-1/2}(\mE^{\pm}):=\left\{\phi(x,y):\;\phi\in H^{-1/2}(\mE^{\pm})
\mbox{ and }\int_{\mE^{\pm}}\phi(x,y) dl = 0\right\},$$
the system (\ref{eq:forwardPDE}) defines a map
\begin{equation}\label{liu24}
    \Lambda_\sigma~:g\in H^{-1/2}_{\diamond}(\mE^{\pm}) \mapsto \frac{\mu_0}{4\pi}\int_\Om \frac{\langle \r-\r',-\sigma(\r')\na u(\r')\times \hat \z \rangle}{|\r-\r'|^3}d\r'
\end{equation}
from a boundary current injection $(g,\mE^{\pm})$ to the internal magnetic field component $B_z$.
Since $\Om$ is a cylinder along the $z$ direction, and $\sigma(\r), u(\r)$ are independent of $z$, the right hand side of \eref{liu24} is
\begin{eqnarray}\label{liu14}
& &\frac{\mu_0}{4\pi}\int_\Om \frac{\langle \r-\r',-\sigma(\r')\na u(\r')\times \hat \z \rangle}{|\r-\r'|^3}d\r'
\nonumber\\&=&
\frac{\mu_0}{4\pi}\int_{\mathbb{R}^1}dz'\int_{\Om_{z'}}
\frac{(x-x',y-y')\cdot\s(x',y')(-u_{y'}(x',y'),u_{x'}(x',y'))}
{\sqrt{((x-x')^2+(y-y')^2+(z-z')^2)^3}}dx'dy'
\nonumber\\&\equiv&
\frac{\mu_0}{2\pi}\int_{\Om_{z_0}}
\frac{(x-x',y-y')\cdot\s(x',y')(-u_{y'}(x',y'),u_{x'}(x',y'))}
{(x-x')^2+(y-y')^2}dx'dy'
\end{eqnarray}
for $(x,y,z_0)\in\Om_{z_0}$, where the last equality comes from the fact that
$\int_0^{+\infty}\frac{1}{(\sqrt{1+s^2})^3}ds=1$.

Therefore, to reconstruct $\sigma$ for a two-dimensional or a special three-dimensional cylinder MREIT model from inversion input data $B_z$, we need only to solve the linear equation \eref{linear hyperbolic0} with respect to $\ln\sigma$ from known boundary values of $\sigma$. Note that the equation \eref{linear hyperbolic0} is linear since in these situations $\widetilde\J$ can be determined by $\Deltaxy B_z$.  However, this process needs a uniform lower bound of $|\J|$ in $\Om$ to obtain a stable solution of $\sigma$. Unfortunately, such a uniform lower bound cannot be ensured for a general three-dimensional object $\Omega$. For details, see Corollary \ref{corollary:uniform_lower_bound} and \cite{Liu2010}.

Noticing that both $g$ and the right-hand side of \eref{liu14} are independent of $z$, we will write
the two-dimensional slice $\Om_{z_0}$ as
\begin{equation*}\label{liu13}
\Om:=\{(x,y): x=r(\theta)(\cos\theta, \sin\theta), \theta\in [0,2\pi]\}
\end{equation*}
again with $2\pi-$periodic polar radius $r(\theta)>0$, to simplify notation.

Subsequently, we assume that $\Om\subset \R^2$ and always consider the case that $\sigma(x,y)\in C^1(\overline \Om)$.  For any $g\in H_\diamond^{-1/2}(\mE^{\pm})$, \eref{liu24}-\eref{liu14} define a Neumann-to-$B_z$ map from $H_\diamond^{-1/2}(\mE^{\pm}) \to H^1(\Om)$, by $u$ solving \eref{eq:forwardPDE} in a two-dimensional domain $\Om$ with $g$ given by the Neumann data. Based on the above relationship between the two- and three-dimensional problems, notations $\na$, $\Delta$, $\n$ and $\J\equiv \widetilde \J$  are always assumed to be those for two-dimensional cases. Moreover, for the two-dimensional case, we also denote $\r = (x,y)$.



Let $\widetilde \Om \subset\subset \Om\subset {\mathbb{R}^2}$ be a connected domain such that $\Om\setminus \widetilde \Om$ is a doubly connected domain and $\p\Om$ is the outer boundary of $\Om\setminus \widetilde \Om$.
We will prove, if $\sigma|_{\Om\setminus \widetilde \Om}$ is known, that we can uniquely reconstruct $\sigma$ from the measured data $B_z[\sigma]$ corresponding to a single injected current.
To this end, we firstly need to uniquely determine ${\bf J}[\sigma]$ from inversion input data $B_z[\sigma]$ directly for the two-dimensional case, without using the values of $\sigma$. To simplify our explanation, we denote $\overline{\p\Om\setminus \mE^\pm} = \Gamma^+\cup \Gamma^-$, where $\mE^+, \Gamma^+,  \mE^-, \Gamma^- \subset\partial\Om$ are in the counterclockwise direction, see Fig.~\ref{Fig:attach_electrode}(b).

The following theorem  provides a way of recovering $\J$  using only the given data $B_z$.

\begin{theorem}\label{SolvingJ}
For a two-dimensional MREIT model, the internal current $\J (x,y)$ from the nonlocal model \eref{eq:forwardPDE} can be determined from $B_z(x,y)$ directly by
\begin{equation}\label{eq:Jmain}
\J(x,y) = (J_x, J_y) = \na^\bot\left(\phi-\f{I}{2}\psi\right),
\end{equation}
where $\phi$ and $\psi$ are the solutions to the boundary value problems
\begin{eqnarray}\label{eq:Jmain_phi}
\begin{cases}
    \Delta \phi = \f{1}{\mu_0}\Delta B_z, &\mbox{in }\Om \\
    {\bf n}\cdot\na \phi = 0, &\mbox{on } \mE^+\cup \mE^- \\
    \phi|_{\Gamma^{\pm}} = 0
\end{cases}
\end{eqnarray}
and
\begin{eqnarray}\label{eq:Jmain_psi}
\begin{cases}
    \Delta \psi = 0, &\mbox{in }\Om \\
    {\bf n}\cdot \na\psi = 0, &\mbox{on } \mE^+\cup \mE^- \\
    \psi|_{\Gamma^{\pm}} = \pm 1,
\end{cases}
\end{eqnarray}
respectively.
\end{theorem}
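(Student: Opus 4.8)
The plan is to pin down the current $\J=-\sigma\na u$ by its divergence, its scalar curl $\mathrm{curl}\,\v:=\p_x v_2-\p_y v_1$, and its behaviour on the two kinds of boundary arcs, and then to exhibit $\na^\bot(\phi-\frac{I}{2}\psi)$ as the unique divergence-free field carrying exactly those data. First I would record the four ingredients. From \eref{eq:forwardPDE}, $\na\cdot\J=-\na\cdot(\sigma\na u)=0$ in $\Om$. Ohm's law $\J=-\sigma\na u$ turns \eref{nonlinear hyperbolic} into $\mathrm{curl}\,\J=\p_xJ_y-\p_yJ_x=-\frac{1}{\mu_0}\Delta B_z$ in $\Om$. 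On the insulated arcs $\Gamma^\pm$ the boundary condition $\sigma\,\p u/\p\n=0$ reads $\J\cdot\n=0$; on the electrodes $\mE^\pm$ the condition $\na u\times\n=\mathbf 0$ says $\J$ is parallel to $\n$, i.e.\ $\J\cdot\tau=0$ with $\tau$ the unit tangent; and $\int_{\mE^+}\sigma\,\p u/\p\n\,dl=I$ becomes $\int_{\mE^+}\J\cdot\n\,dl=-I$ (so $\int_{\mE^-}\J\cdot\n\,dl=I$).

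Next, since $\Om$ is simply connected and $\na\cdot\J=0$, I would introduce a stream function $\Psi$ with $\J=\na^\bot\Psi$, unique up to an additive constant, and transfer the data above to it. Because $\mathrm{curl}(\na^\bot\Psi)=-\Delta\Psi$, the curl identity becomes $\Delta\Psi=\frac{1}{\mu_0}\Delta B_z$ in $\Om$. Using $\na^\bot\Psi\cdot\n=\p_\tau\Psi$ and $\na^\bot\Psi\cdot\tau=-\p_\n\Psi$ on $\p\Om$: the condition $\J\cdot\tau=0$ on $\mE^\pm$ gives $\p_\n\Psi=0$ there; $\J\cdot\n=0$ on $\Gamma^\pm$ forces $\Psi\equiv a$ on $\Gamma^+$ and $\Psi\equiv b$ on $\Gamma^-$; and integrating $\p_\tau\Psi$ along $\mE^+$ from its endpoint on $\Gamma^-$ to its endpoint on $\Gamma^+$ yields $a-b=\int_{\mE^+}\J\cdot\n\,dl=-I$. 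Since $a-b$ is thereby fixed, the single free additive constant can be chosen so that $a=-\frac{I}{2}$ and $b=\frac{I}{2}$ simultaneously. With $\psi$ the unique solution of \eref{eq:Jmain_psi}, set $\phi:=\Psi+\frac{I}{2}\psi$; one checks at once that $\Delta\phi=\frac{1}{\mu_0}\Delta B_z$ in $\Om$, $\p_\n\phi=0$ on $\mE^\pm$, and $\phi=0$ on $\Gamma^\pm$, so $\phi$ solves \eref{eq:Jmain_phi} and therefore $\J=\na^\bot\Psi=\na^\bot(\phi-\frac{I}{2}\psi)$.

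Two points then remain. For well-posedness of the formula, \eref{eq:Jmain_phi} and \eref{eq:Jmain_psi} are uniquely solvable mixed Dirichlet--Neumann problems for the Laplacian, the Dirichlet datum on $\Gamma^\pm$ removing the constant kernel; this is classical once one notes $\Delta B_z\in L^2(\Om)$, which holds since $\sigma\in C^1(\overline\Om)$ yields $u\in H^1(\Om)$ and hence $\mathrm{curl}\,\J\in L^2(\Om)$. For uniqueness of the recovered field, suppose $\w$ is divergence-free and curl-free in $\Om$ with $\w\cdot\n=0$ on $\Gamma^\pm$, $\w\cdot\tau=0$ on $\mE^\pm$, and $\int_{\mE^+}\w\cdot\n\,dl=0$; writing $\w=\na h$ with $h$ harmonic, one obtains $\p_\n h=0$ on $\Gamma^\pm$, $h$ constant on each of $\mE^+$ and $\mE^-$, and, by harmonicity, $\int_{\mE^+}\p_\n h\,dl=\int_{\mE^-}\p_\n h\,dl=0$, whence $\int_\Om|\na h|^2\,d\r=\int_{\p\Om}h\,\p_\n h\,dl=0$ and $\w=\mathbf 0$. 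Applying this to $\w=\J-\na^\bot(\phi-\frac{I}{2}\psi)$---whose hypotheses follow from the boundary data in \eref{eq:Jmain_phi}--\eref{eq:Jmain_psi}---completes the proof.

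The main obstacle is not any deep analysis but the vector-calculus bookkeeping of the middle paragraph: transferring the two distinct boundary conditions and the injection constraint onto the stream function with the right signs, and verifying that the lone additive constant suffices to realise both normalisations $a=-\frac{I}{2}$, $b=\frac{I}{2}$---which works precisely because the injection constraint pins $a-b$ to $-I$. Keeping the orientations of $\n$, $\tau$ and the convention for $\cdot^\bot$ consistent throughout is the only real source of error; everything else is routine.
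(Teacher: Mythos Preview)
Your argument is correct and takes a genuinely different route from the paper. The paper does not construct the stream function from scratch: it quotes from \cite{SeoJeonLeeWoo2011} the representation $\J=\na^\bot(\phi+\beta\psi)$ with $\beta$ given as the ratio \eref{eq:beta} of three line integrals over $\mE^+$, and then evaluates those integrals directly (using the same observation $\na^\bot\phi\cdot\n=\p_\tau\phi$ you use) to show $\beta=-I/2$. Your proof is instead self-contained: you introduce the stream function $\Psi$ for the divergence-free field $\J$, transfer the PDE, the two boundary conditions and the injection constraint onto $\Psi$, fix the additive constant so that $\Psi|_{\Gamma^\pm}=\mp I/2$, and then read off the splitting $\Psi=\phi-\frac{I}{2}\psi$. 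What you gain is independence from the external reference and a transparent explanation of \emph{why} the coefficient is exactly $-I/2$ (it is forced by $a-b=-I$ together with the symmetric normalisation). What the paper's approach gains is brevity, given the cited result. Your extra uniqueness paragraph is sound but redundant once you have established well-posedness of \eref{eq:Jmain_phi}--\eref{eq:Jmain_psi}: since those problems have unique solutions and you have shown that the stream function $\Psi$ of $\J$, suitably normalised, coincides with $\phi-\frac{I}{2}\psi$, the identity $\J=\na^\bot(\phi-\frac{I}{2}\psi)$ is already pinned down. One minor point: your justification that $\Delta B_z\in L^2(\Om)$ from $u\in H^1(\Om)$ is too quick, since $\mathrm{curl}\,\J$ involves second derivatives of $u$; you should invoke interior elliptic regularity for \eref{eq:forwardPDE} (as in Lemma~\ref{lemma:regularity_forward}) to get $u\in H^2_{\mathrm{loc}}(\Om)$.
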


\begin{proof}
From \cite{SeoJeonLeeWoo2011}, for a known $\phi,\psi$ defined by \eref{eq:Jmain_phi} and  \eref{eq:Jmain_psi} respectively, the internal current has the representation $\J(\r) = \na^\bot(\phi+\beta \psi)$, where
$\beta$ is a scaling factor defined as
\begin{equation}\label{eq:beta}
  \beta = \f{\int_{\mE^+}\sigma\na u\cdot \n d\ell - \int_{\mE^+}\na^\bot \phi\cdot \n d\ell}{\int_{\mE^+}\na^\bot \psi \cdot \n  d\ell},
\end{equation}
where $d\ell$ is the arc length element. It remains to prove $\beta = -\f{I}{2}$.

Indeed, from the boundary condition of the forward equation \eref{eq:forwardPDE} we have
\begin{equation}\label{beta_1}
\int_{\mE^+}\sigma \na u\cdot \n d\ell = I.
\end{equation}
Let us calculate $\int_{\mE^+} \na^\bot \phi\cdot \n d\ell$ and $\int_{\mE^+} \na^\bot \psi\cdot \n d\ell$.
We firstly parameterize the curve $\mE^+$ by ${\bm\gamma}(\ell)=(x(\ell),y(\ell))$, where $0\leq \ell\leq |\mE^+|$ is the arc length parameter with $|\mE^+|$ representing the measure of the electrode $\mE^+$. Then from the boundary condition in \eref{eq:Jmain_phi}, we obtain that $\phi({\bm\gamma}(0)) = \phi({\bm\gamma}(|\mE^+|))=0$. Moreover, the straightforward calculation using the chain rule yields
\begin{equation*}
  \f{d}{d\ell}\phi({\bm\gamma}(\ell)) = \na\phi({\bm\gamma}(\ell))\cdot {\bm\gamma}'(\ell) = \na^\bot \phi({\bm \gamma}(\ell))\cdot \n,
\end{equation*}
where the last equality comes from the fact that ${\bm \gamma}'(\ell)$ is the unit tangential vector to $\mE^+$.
From the Newton-Leibniz formula, we obtain
\begin{equation}\label{beta_2}
  \int_{\mE^+} \na^\bot \phi\cdot \n d\ell = \int_{\mE^+}\f{d}{d\ell}\phi({\bm\gamma}(\ell))d\ell = \phi({\bm\gamma})(|\mE^+|) - \phi({\bm\gamma})(0)= 0-0 = 0.
\end{equation}
Using the same argument, we can prove that
\begin{equation}\label{beta_3}
  \int_{\mE^+} \na^\bot \psi\cdot \n d\ell = -2.
\end{equation}

Combining the identities \eref{beta_1}, \eref{beta_2} and \eref{beta_3} we obtain $\beta = -\f{I}{2}$. This completes the proof.
\end{proof}

Due to the requirement to calculate three line integrals with integrands being normal derivatives in a small area of $\mE^+$ to obtaining $\beta$ as defined in \eref{eq:beta}, reconstruction of the current density $\J$ could be sensitive to numerical error using the method described in \cite{SeoJeonLeeWoo2011}. The formula \eref{eq:Jmain} enables us  to recover $\J$ from the magnetic field directly, which can be realized in an efficient way. In fact, noticing $\J^\bot=\na\phi-\f{I}{2}\na\psi$, an efficient computation scheme for $\J$ is essentially to compute $\na\phi$ and $\na\psi$ in $\widetilde\Om$.

Mathematically,  we obtain $\Delta(\phi - \f{1}{\mu_0}B_z) = 0$  from \eref{eq:Jmain_phi} for known $B_z$. Hence the systems \eref{eq:Jmain_phi} and \eref{eq:Jmain_psi} for $\phi,\psi$ can be unified in the form
\begin{eqnarray}\label{liu51}
\begin{cases}
    \Delta W = 0 &\mbox{in }\Om \\
    {\bf n}\cdot\na W = b_n &\mbox{on } \mE^+\cup \mE^- \\
    W = b_d &\mbox{on }\Gamma^\pm
\end{cases}
\end{eqnarray}
for known boundary data $(b_n, b_d)$. This Laplacian equation with mixed boundary condition can be solved using the boundary equation method, with the representation
\begin{eqnarray*}
W({\bf r})&=&-\int_{\mE^\pm}[b_n({\bf r'})\Psi({\bf r'}-{\bf r})-\p_{\bf n}\Psi({\bf r'}-{\bf r})W({\bf r'})]d\ell(\r')-
\nonumber\\& &\int_{\p\Om\setminus\mE^{\pm}}[\Psi({\bf r'}-{\bf r})\p_{\bf n}W({\bf r'})-\p_{\bf n}\Psi({\bf r'}-{\bf r})b_d({\bf r'})]d\ell(\r'), \quad {\bf r}\in\widetilde\Om,
\end{eqnarray*}
where $\Psi({\bf r'}-{\bf r})=\frac{1}{2\pi}\ln\frac{1}{|{\bf r'}-{\bf r}|}$ is the fundamental solution to the Laplacian operator in two-dimensional cases, and the density function $(W|_{\mE^{\pm}}, \p_{\bf n}W|_{\p\Om\setminus\mE^{\pm}})$ can be determined from the jump relations on $\p\Om$ for single and double layer potentials \cite{Folland1995}. Finally we have
\begin{eqnarray}\label{gradientW}
\na W({\bf r})&=&-\int_{\mE^\pm}[b_n({\bf r'})\na\Psi({\bf r'}-{\bf r})-\p_{\bf n}\na\Psi({\bf r'}-{\bf r})W({\bf r'})]d\ell(\r')-
\nonumber\\& &\int_{\p\Om\setminus\mE^{\pm}}[\na\Psi({\bf r'}-{\bf r})\p_{\bf n}W({\bf r'})-\na\p_{\bf n}\Psi({\bf r'}-{\bf r})b_d({\bf r'})]d\ell(\r')
\end{eqnarray}
for $\r\in\widetilde\Om$, which avoids the need for numerical differentiation to obtain $W$ by computing the right hand side directly. Moreover, we noticed that the integrands in the right hand side are smooth for ${\bf r}\in\widetilde\Omega$.

\begin{remark}
  The formula \eref{gradientW} provides us a convenient way to calculate $\J$ mathematically since we do not need to numerically calculate $\na W$ which could amplify the noise in the data $B_z$. However, it should be pointed out that physically, it could be difficult to calculate $\J$ in such a way due to the existence of stray magnetic fields $\mathcal{H}$ in the measured magnetic field, i.e., the practical measured data in physical configuration are $B_z+\mathcal{H}$, see also \eqref{biot-savart law}. Such an unknown $\mathcal{H}$ can be considered as an artifact in $B_z$.
\end{remark}

\subsection{Uniqueness of 2D MREIT reconstructions using one injection current}

Based on Theorem \ref{SolvingJ}, now we prove the uniqueness of recovering $\sigma$ with one injection current $(\mE^{\pm}, g)$ for the two-dimensional MREIT model.
Due to the integrable singularity of $\na\Psi$, we define a constant
$$K:=\left\|\int_{\widetilde \Om} |\na\Psi({\bf r}'-\cdot)|\;d{\bf r}'\right\|_{C(\overline\Om)}<+\infty.$$ Furthermore, we introduce the admissible set
\begin{equation*}
\mathcal{A}[\epsilon_0,\sigma_\pm^0,\sigma_b]:=\{\sigma\in C^{1}(\overline\Om)~:~\sigma_-^0\leq \sigma\leq \sigma_+^0,~\|\na\ln\sigma\|_{C(\overline\Om)}\leq \epsilon_0,~\sigma|_{\Om\setminus \widetilde \Om} = \sigma_b\},
\end{equation*}
where $\epsilon_0(<\f{1}{4K})$ and $\sigma_\pm^0$ are known positive constants, and $\sigma_b$ is a known function satisfying
$$\sigma_b\in C^1(\overline{\Om\setminus \widetilde \Om}),\quad \sigma_-^0\leq \sigma_b \leq \sigma_+^0,\quad \|\na\ln\sigma_b\|_{C(\overline{\Om\setminus \widetilde \Om})}\leq \epsilon_0.$$

The following theorem states that if the conductivity distribution in the subregion $\Om\setminus \widetilde{\Om}$ is known (say, $\sigma_b$), we can uniquely reconstruct $\sigma$ from the measured data $B_z$ corresponding to one-injected current.
\begin{theorem}\label{thm:uniqueness}
For $\sigma,~\widehat\sigma \in \mathcal{A}[\epsilon_0,\sigma^0_\pm,\sigma_b]$, if $\Lambda_{\sigma}[g] = \Lambda_{\widehat\sigma}[g]$ for one injected current $(I,\mE^\pm)$ with $I>0$, then $\sigma = \widehat\sigma$ in $\Om$.
\end{theorem}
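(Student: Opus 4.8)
The plan is to reduce the uniqueness question to a first-order linear transport equation for $w := \ln\sigma - \ln\widehat\sigma$ along a vector field that is itself independent of the unknown conductivity, and then close a fixed-point/contraction argument using the smallness parameter $\epsilon_0 < \tfrac{1}{4K}$. First I would invoke Theorem~\ref{SolvingJ}: since $\Lambda_\sigma[g] = \Lambda_{\widehat\sigma}[g]$ gives the same $B_z$ data, the internal currents coincide, $\J[\sigma] = \J[\widehat\sigma] =: \J$, because \eref{eq:Jmain}--\eref{eq:Jmain_psi} recover $\J$ from $B_z$ alone without reference to $\sigma$. Writing the hyperbolic identity \eref{linear hyperbolic0} for both conductivities, $\J^\bot\cdot\naxy\ln\sigma = -\tfrac{1}{\mu_0}\Deltaxy B_z = \J^\bot\cdot\naxy\ln\widehat\sigma$, and subtracting yields the homogeneous transport equation
\begin{equation}\label{transport-w}
\J^\bot\cdot\naxy w = 0 \quad\mbox{in }\Om,\qquad w = 0 \quad\mbox{on }\Om\setminus\widetilde\Om,
\end{equation}
where the boundary condition comes from $\sigma|_{\Om\setminus\widetilde\Om} = \widehat\sigma|_{\Om\setminus\widetilde\Om} = \sigma_b$.

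Next I would establish that the characteristic field $\J^\bot$ has no interior zeros and that its integral curves, started from any point of $\widetilde\Om$, must exit into the known region $\Om\setminus\widetilde\Om$ — this is where the structure of the admissible set and the single-current geometry enter. Because $\J$ is divergence-free with a prescribed net flux $I > 0$ through $\mE^+$, one expects (citing Corollary~\ref{corollary:uniform_lower_bound} or the analysis behind it) a uniform lower bound $|\J| \ge c_0 > 0$ on $\overline\Om$; the rotated field $\J^\bot$ inherits this, so its flow is well-defined and every streamline through $\widetilde\Om$ reaches $\p\widetilde\Om$ in finite arclength and then continues into $\Om\setminus\widetilde\Om$. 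Along each such characteristic, \eref{transport-w} says $w$ is constant, and since it vanishes on the portion of the curve lying in $\Om\setminus\widetilde\Om$, it vanishes on all of $\widetilde\Om$ as well. Hence $w \equiv 0$, i.e. $\sigma = \widehat\sigma$ in $\Om$.

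The role of the smallness condition $\epsilon_0 < \tfrac{1}{4K}$ and the constant $K$ is presumably not needed for the bare uniqueness statement but rather to guarantee that the representation \eref{gradientW} and the fixed-point map underlying the iterative algorithm are well-posed and contractive — so I would use it here only to ensure that $\J[\sigma]$ depends continuously (indeed Lipschitz) on $\sigma$ within $\mathcal{A}$, which in turn guarantees $|\J|$ stays bounded away from zero uniformly over the admissible class, legitimizing the characteristic argument above. An alternative, more quantitative route that avoids case analysis on the flow geometry: introduce the stream function $\Theta$ with $\J = \naxy^\bot\Theta$ (available since $\naxy\cdot\J = 0$ on the simply/doubly connected $\Om$), change variables to level sets of $\Theta$, and observe that \eref{transport-w} becomes $\p_\Theta w = 0$ in the new coordinates, so $w$ is a function of $\Theta$ alone; matching with $w = 0$ on $\Om\setminus\widetilde\Om$ forces $w \equiv 0$ provided every level set of $\Theta$ meeting $\widetilde\Om$ also meets $\Om\setminus\widetilde\Om$, which is exactly the geometric fact that $\Om\setminus\widetilde\Om$ is a doubly connected collar around $\p\Om$.

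The main obstacle I anticipate is precisely the claim that $\J^\bot$ has no interior critical points and that its streamlines cannot form closed loops trapped inside $\widetilde\Om$; for a general conductivity this is a delicate matter, and the paper must be leaning on the special two-dimensional (or cylindrical) structure together with the single-current boundary condition and the bound $\epsilon_0$ to rule this out. Establishing the uniform lower bound $|\J| \ge c_0$ — equivalently, that $u$ has no interior critical points — is the technical heart; once that is in hand, the transport/characteristics argument is routine.
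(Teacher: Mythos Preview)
Your outline is essentially the paper's own argument: identify $\J[\sigma]=\J[\widehat\sigma]$ via Theorem~\ref{SolvingJ}, note that both $\ln\sigma$ and $\ln\widehat\sigma$ solve the same first-order hyperbolic problem in $\widetilde\Om$ with the same boundary data on $\partial\widetilde\Om$, and conclude from a positive lower bound on $|\J|$ that the solution is unique. The paper packages the last two steps by citing Alessandrini--Rosset~\cite{Alessandrini2004} for $\inf_{\widetilde\Om}|\nabla u|>0$ and Richter~\cite{Richter1981} for uniqueness of the hyperbolic boundary value problem, whereas you unpack the characteristics argument by hand; this is a cosmetic rather than a substantive difference.

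Two points need tightening. First, your reference for the gradient lower bound is off: Corollary~\ref{corollary:uniform_lower_bound} assumes $\sigma_b$ is a \emph{constant} and is proved by a perturbation argument, so it does not cover Theorem~\ref{thm:uniqueness} as stated. The paper obtains $\inf_{\widetilde\Om}|\nabla u[\widehat\sigma]|>0$ directly from \cite{Alessandrini2004} (see \eref{infimum_nablau}), with no smallness of $\epsilon_0$ and no constancy of $\sigma_b$ required; you should cite that instead. Relatedly, the bound is only on the compact subdomain $\widetilde\Om$, not on all of $\overline\Om$ as you write --- but that is all the argument needs. Second, your ``alternative'' stream-function route is muddled: with $\J=\nabla^\bot\Theta$ one has $\J^\bot=-\nabla\Theta$, so $\J^\bot\cdot\nabla w=0$ indeed becomes $\partial_\Theta w=0$, but that means $w$ is constant along \emph{gradient lines} of $\Theta$, not that ``$w$ is a function of $\Theta$ alone''; the geometric condition you then need is that every gradient line of $\Theta$ (equivalently, every level curve of $u$) through $\widetilde\Om$ meets $\Om\setminus\widetilde\Om$, not every level set of $\Theta$. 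This is exactly what the absence of interior critical points of $u$ (via Alessandrini--Rosset and the maximum principle) guarantees, and is what Richter's uniqueness result encodes.
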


\begin{proof}
Since we assume the conductivity is known in $\Om\setminus \widetilde\Om$, it is enough to prove $\sigma = \widehat\sigma$ in $\widetilde\Om$. From Proposition 2.10 of \cite{Alessandrini2004}, there exists a positive constant $C_1>0$ depending only on $(\Om,\widetilde \Om,\mE^\pm,\sigma^0_\pm,\sigma_b,\ep_0)$ such that
\begin{equation}\label{infimum_nablau}
  \xi_{\widehat\sigma}: = \inf_{\widetilde \Om} |\na^\bot u[\widehat\sigma]|  \geq C_1 I(u[\widehat\sigma]|_{\mE^+}-u[\widehat\sigma]|_{\mE^-})^{1/2} = C_1 I(u[\widehat\sigma]|_{\mE_+})^{1/2}>0
\end{equation}
for $\widehat\sigma\in \mathcal{A}[\epsilon_0,\sigma^0_\pm,\sigma_b]$, with
$u[\widehat\sigma]$ solving \eref{eq:forwardPDE}. The last equality in \eref{infimum_nablau} comes from \eref{liu21}, while the last inequality in \eref{infimum_nablau} comes from the fact that the constant function $u[\widehat\sigma]|_{\mE^+}$ cannot be zero, otherwise $u[\widehat\sigma]|_{\mE^\pm} = 0$ together with $\frac{\p u[\widehat\sigma]}{\p{\bf n}}|_{\p\Om\setminus\mE^\pm} = 0$ will lead to $u[\widehat\sigma]\equiv 0$ in $\Om$, which contradicts the requirement that  $\int_{\mE^+}\widehat\sigma\f{\p u[\widehat\sigma]}{\p \n}dS = I>0$. Hence $|\J[\widehat\sigma]^\bot | := |\widehat\sigma \na^\bot u[\widehat\sigma]|>\sigma_-^0 \xi_{\widehat\sigma}>0$, and $\widehat\sigma$ satisfies
\begin{eqnarray}\label{eq-inverse}
    \begin{cases}
      \J[\widehat\sigma]^\bot \cdot \na \ln\widehat\sigma = -\f{1}{\mu_0} \Deltaxy B_z[\hat\sigma]  &\mbox{in }\widetilde \Om, \\
      \ln\widehat\sigma = \ln\sigma  &\mbox{on } \p\widetilde \Om
    \end{cases}
\end{eqnarray}
from \eref{liu15} and $\p_z^2B_z\equiv 0$ with positive $|\J[\widehat\sigma]^\bot|$ in $\widetilde\Om$.
The condition $\ln\widehat\sigma =\ln \sigma$ on $\partial\widetilde\Om$ in \eref{eq-inverse} comes from $\widehat\sigma =\sigma$ in $\Om\setminus \widetilde \Om$. By Theorem \ref{SolvingJ},  $\J[\widehat\sigma]$ is determined by $B_z[\widehat\sigma]$, so $B_z[\widehat\sigma]=\Lambda_{\widehat\sigma}[g]=\Lambda_{\sigma}[g]=B_z[\sigma]$ yields $\J[\widehat\sigma]=\J[\sigma]$. From \cite{Richter1981}, there exists a unique solution to \eref{eq-inverse} for known $B_z$ and $\bf J$. Therefore we conclude from \eref{eq-inverse} that $\ln\widehat\sigma \equiv \ln\sigma$ in $\widetilde\Om$. That is, $\widehat\sigma \equiv \sigma$ in $\overline\Om$, since we already have
$\widehat\sigma \equiv \sigma=\sigma_b$ in $\Om\setminus\widetilde \Om$. This completes the proof.
%
%
%
%
\end{proof}

\section{Single current harmonic $B_z$ algorithm for two-dimensional MREIT model, and convergence analysis}

To reconstruct the conductivity distribution from a single $B_z$ dataset, or to solve the boundary value problem \eref{eq-inverse} stably using the given $B_z$ data, we previously proposed the single current harmonic $B_z$ algorithm in \cite{SongSadleir2020}. This algorithm can be divided into three steps. The first step is to reconstruct a current flux density $\J$ from the measured $B_z$ data directly, and the second step is to solve $\ln\sigma$ in terms of the data pair $(B_z,\J)$ from \eref{eq-inverse}. Finally the conductivity distribution $\sigma$ can be reconstructed from $\sigma=\exp(\ln\sigma)$.  For our two-dimensional MREIT model, $\J$ can be exactly recovered from $B_z$ directly due to Theorem \ref{SolvingJ}.

Here we just consider the second step and assume that the current density $\J$ has been precisely recovered from given exact $B_z$ data. Since
\begin{equation}\label{eq:Deltau_nablalnsigma}
-\sigma\Delta u+\J^T\cdot \nabla\ln\sigma\equiv 0
\end{equation}
for $\J=(J_x,J_y)^T$ from
$-\nabla\cdot(\sigma \nabla u)\equiv\nabla\cdot \J\equiv 0$, equation \eref{linear hyperbolic0} together with the relation \eref{eq:Deltau_nablalnsigma} yields the vector identity
\begin{eqnarray}\label{liu52}
\left[
\J^\bot(\J^\bot)^T
+\J\J^T
\right]\nabla\ln\sigma\equiv
\sigma \Delta u \J
-\frac{1}{\mu_0}\Delta B_z\J^\bot
\end{eqnarray}
with $\J^\bot:=(J_y,-J_x)^T$. Since $
\J^\bot(\J^\bot)^T
+\J\J^T=|\J|^2 \mathbb{I}$, where $\mathbb{I}$ represents the $2\times 2$ identity matrix, the conductivity $\sigma$ solving \eref{liu52} can be approximately recovered using the iterative process
\begin{equation}\label{eq:updating}
  \na \ln\sigma^{n+1}
  =\frac{1}{|\J|^2}
  \left[
    \begin{array}{c}
      \sigma^n J_x \Delta u^n -\frac{1}{\mu_0 }J_y \Delta B_z \\
      \sigma^n J_y \Delta u^n +\frac{1}{\mu_0}J_x \Delta B_z \\
    \end{array}
  \right]
  \quad
  \mbox{in }\widetilde \Om
\end{equation}
for $n=0,1,2,\cdots$ with an initial guess $\sigma^0\in C^1(\overline \Om)$, where $u^n=u[\sigma^n]$ is the solution to \eref{eq:forwardPDE} with $\sigma$ replaced by $\sigma^n$. We can rewrite \eref{eq:updating} as
\begin{equation}\label{eq-9}
  \na \ln \sigma^{n+1} =
  \left[
    \begin{array}{cc}
      \sigma^n \Delta u^n &  -\frac{1}{\mu_0 } \Delta B_z \\
      \frac{1}{\mu_0} \Delta B_z  & \sigma^n \Delta u^n \\
    \end{array}
  \right]
  \f{\J}{|\J|^2}:=\mathbf{s}[\sigma^n, B_z]:= \mathbf{s}^n,
\end{equation}
noticing that $\J$ has been obtained from $B_z$.
Finally, reconstruction of the conductivity distribution by the single-current harmonic $B_z$ algorithm can be realized iteratively by solving the linear elliptic equation
\begin{equation}\label{algorithm:SCHBz}
  \left\{
    \begin{split}
      & \Delta \ln \sigma^{n+1} = \na\cdot \mathbf{s}^n ~\qquad \mbox{in } \widetilde \Om \\
      & \ln \sigma^{n+1} = \ln\sigma_b  \qquad \qquad \mbox{on }\p \widetilde\Om,
    \end{split}
  \right.
\end{equation}
with respect to $\ln\sigma^{n+1}$ in $\widetilde\Omega$
for known $\sigma^n$
from a specified initial guess $\sigma^0\in \mathcal{A}[\epsilon_0,\sigma^0_{\pm},\sigma_b]$. We then set $\sigma^{n+1} := \sigma_b$ in $\Om\setminus \widetilde \Om$ to yield $\sigma^{n+1}$ in $\Om$. The iteration stops at
$\|\ln\f{\sigma^{n+1}}{\sigma^n}\|\leq \epsilon$ for some specified tolerance $\epsilon>0$.

\begin{remark}
For the reconstruction algorithm, it is better to solve \eref{algorithm:SCHBz} from
\begin{eqnarray}\label{liu001}
    \begin{cases}
      \Delta \ln \sigma^{n+1} = \na\cdot \mathbf{t}^n-\nabla\cdot\mathbf{t}^*  &\mbox{in } \widetilde\Om \\
      \ln \sigma^{n+1} = \ln \sigma_b  &\mbox{on }\p \widetilde\Om,
    \end{cases}
\end{eqnarray}
where
$\mathbf{t}^n:=
\frac{1}{|\J|^2}
  \sigma^n\Delta u^n\J^T, \; \mathbf{t}^*:=
\frac{1}{|\J|^2}\frac{1}{\mu_0 }\Delta B_z(\J^\bot)^T$.
Then we obtain the sequence
\begin{eqnarray}\label{sigma_n+1}
\sigma^{n+1}(\bf r):=
\begin{cases}
\sigma^{n+1}(\bf r), &{\bf r}\in\widetilde\Om,\\
\sigma_b(\bf r), &{\bf r}\in \Om\setminus\overline{\widetilde\Om}
\end{cases}
\end{eqnarray}
in terms of \eref{algorithm:SCHBz}.
\end{remark}

To establish our main result showing convergence of the iterative process \eref{liu001}-\eref{sigma_n+1} using $B_z$ from a single injection current, we need the following lemmas.

\begin{lemma}\label{lemma:regularity_forward}
  Suppose that $v$ is the unique solution to the boundary value problem
  \begin{eqnarray}\label{eq:forwardPDE_general}
    \begin{cases}
      \na\cdot (\sigma \na v) = \sigma f, \qquad \mbox{in }\Om \\
      \int_{\mE^+}\sigma \frac{\p v}{\p \n }dS =
        I = -\int_{\mE^-}\sigma \frac{\p v}{\p \n }dS \\
      \na v\times \n|_{\mE^\pm} = {\bf 0} \\
      \left.\sigma \frac{\p v}{\p \n }\right|_{\p \Om \setminus \overline{\mE^+\cup \mE^-}} = 0 \\
      v|_{\mE^-} = 0
    \end{cases}
\end{eqnarray}
for $\sigma\in C^1(\overline{\Om})$ and a specified function $f(\r)$. Suppose $0<\sigma_-:=\inf_\Om \sigma \leq \sigma \leq \sigma_+:=\sup_\Om \sigma<+\infty $. Denote a fixed domain $\widetilde{\widetilde\Om}$ satisfying $\widetilde\Om \subset\subset \widetilde{\widetilde\Om}\subset\subset\Om$. Then the following estimates hold:
  \begin{enumerate}[i)]
    \item If $f\in L^2(\Om)$, there exists a constant $C_2$ depending only on $\Om$ such that
  \begin{equation}\label{estimate:H1}
    \|v\|_{H^1(\Om)} \leq C_2 \left(\frac{\sigma_+}{\sigma_-} \|f\|_{L^2(\Om)} + \f{I}{\sigma_- |\mE^+|^{1/2}}\right).
  \end{equation}
 Moreover, there exists a positive $C_3 = F_3(\|\na\ln \sigma\|_{C(\Om)})$ depending only on $\widetilde{\widetilde\Om}$ such that
  \begin{equation}\label{estimate:H2}
    \|v\|_{H^2(\widetilde{\widetilde \Om})} \leq C_3\|v\|_{H^1(\Om)}.
  \end{equation}

  \item If $f\in C(\Om)$, then $v\in C^{1,\alpha}(\widetilde{\widetilde{\Om}})$ for $\alpha\in (0,1)$ and there exists positive $C_4 = F_4(\|\na\ln \sigma\|_{C(\Om)})$ depending only on $\widetilde\Om$ and $\widetilde{\widetilde\Om}$ such that
  \begin{equation}\label{estimate:Holder}
     \|\na v\|_{C^{0,\alpha}(\widetilde\Om)} \leq C_4(\|v\|_{C^{0,\alpha}(\widetilde{\widetilde\Omega})} + \|f\|_{L^2(\Omega)}).
  \end{equation}
  \end{enumerate}
Here $F_i$ ($i=3,4$) are bounded functions with respect to the argument.
\end{lemma}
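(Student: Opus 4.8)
The plan is to reduce the mixed nonlocal boundary value problem \eref{eq:forwardPDE_general} to a standard elliptic problem and then invoke classical interior regularity theory. First I would fix the constant in the problem: because the boundary data are of mixed Dirichlet--Neumann type after imposing $v|_{\mE^-}=0$, and because on $\mE^+$ the condition $\na v\times\n=\mathbf 0$ together with the integral constraint $\int_{\mE^+}\sigma\partial_\n v\,dS=I$ is equivalent (by the argument in \cite{Liu2010,Somersalo1992}) to a pure Dirichlet condition $v|_{\mE^+}=$ const, the well-posedness in $H^1(\Om)$ follows from Lax--Milgram applied to the bilinear form $a(v,w)=\int_\Om\sigma\na v\cdot\na w$. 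For the a priori bound \eref{estimate:H1} I would test the weak formulation against $v$ itself (subtracting a suitable harmonic lift of the boundary data to handle the nonhomogeneous part), use $\sigma_-\|\na v\|_{L^2}^2\le a(v,v)$ on the left, and on the right bound $\int_\Om\sigma f v$ by $\sigma_+\|f\|_{L^2}\|v\|_{L^2}$ and the boundary term by $\tfrac{I}{\sigma_-^{?}}\|v\|$ using the trace theorem on $\mE^+$ whose measure contributes the $|\mE^+|^{1/2}$ factor; combining with Poincar\'e (valid since $v$ vanishes on $\mE^-$) gives the stated estimate with $C_2=C_2(\Om)$.

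For the interior $H^2$ estimate \eref{estimate:H2}, I would rewrite the equation in nondivergence form away from the boundary as $\Delta v=f-\na\ln\sigma\cdot\na v$ in $\Om$, which is legitimate since $\sigma\in C^1$. On the fixed subdomain $\widetilde{\widetilde\Om}\subset\subset\Om$ the standard interior $H^2$ estimate for the Laplacian gives $\|v\|_{H^2(\widetilde{\widetilde\Om})}\le C(\|f\|_{L^2(\Om)}+\|\na\ln\sigma\|_{C(\Om)}\|\na v\|_{L^2(\Om)}+\|v\|_{L^2(\Om)})$; folding $\|f\|_{L^2}$ back into $\|v\|_{H^1}$ via \eref{estimate:H1} and absorbing the $\na\ln\sigma$ dependence into a function $F_3$ of $\|\na\ln\sigma\|_{C(\Om)}$ yields \eref{estimate:H2}. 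For part ii), when $f\in C(\Om)$, I would again use the nondivergence form and apply interior Schauder / $W^{2,p}$ estimates: from $\Delta v=f-\na\ln\sigma\cdot\na v$ with $f\in C\subset L^p$ for every $p$ and $\na\ln\sigma\cdot\na v\in L^p$ (since $v\in H^2_{loc}$ embeds into $W^{1,p}_{loc}$ for $p<\infty$ in two dimensions), bootstrapping gives $v\in W^{2,p}_{loc}$, hence $v\in C^{1,\alpha}_{loc}$ by Sobolev embedding for $\alpha=1-2/p$. The quantitative estimate \eref{estimate:Holder} then comes from the interior Schauder-type estimate on the nested pair $\widetilde\Om\subset\subset\widetilde{\widetilde\Om}$, controlling $\|\na v\|_{C^{0,\alpha}(\widetilde\Om)}$ by $\|v\|_{C^{0,\alpha}(\widetilde{\widetilde\Om})}$ plus the $L^2$ norm of the right-hand side, with the constant $C_4$ depending on $\|\na\ln\sigma\|_{C(\Om)}$ through the coefficient of the first-order term, hence expressible as $F_4(\|\na\ln\sigma\|_{C(\Om)})$.

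The main obstacle I anticipate is the careful handling of the nonlocal, mixed boundary conditions on $\mE^\pm$ and $\p\Om\setminus\overline{\mE^+\cup\mE^-}$ in establishing well-posedness and the explicit form of \eref{estimate:H1} --- in particular justifying that the nonlocal constraint reduces to a Dirichlet condition and tracking the exact powers of $\sigma_\pm$ and $|\mE^+|$ --- but this is essentially contained in the cited references \cite{Somersalo1992,Liu2010}. Once past the boundary, the interior estimates \eref{estimate:H2}--\eref{estimate:Holder} are routine applications of classical elliptic regularity, and the only point requiring a little care is verifying that all constants depend on $\|\na\ln\sigma\|_{C(\Om)}$ only through a \emph{bounded} function $F_i$, which follows because $\na\ln\sigma$ enters only as the coefficient of a lower-order term that is absorbed by a standard interpolation argument.
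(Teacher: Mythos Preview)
Your strategy is essentially the paper's: test the equation against $v$ itself, use that $v|_{\mE^+}$ is constant to convert the boundary term into $I\,v|_{\mE^+}$ and bound it via trace and Cauchy--Schwarz on $\mE^+$ (yielding the $|\mE^+|^{1/2}$), and finish with Poincar\'e (valid since $v|_{\mE^-}=0$). For \eref{estimate:H2} and \eref{estimate:Holder} the paper simply cites interior elliptic regularity from \cite{Gilbarg2001,Liu2007,Liu2010}; your nondivergence rewriting $\Delta v=f-\na\ln\sigma\cdot\na v$ followed by interior $H^2$/$W^{2,p}$/Schauder estimates is exactly the mechanism behind those citations. The harmonic-lift detour you mention is unnecessary here because the direct testing already handles the nonlocal electrode term.

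One step in your outline does not go through as written: you cannot ``fold $\|f\|_{L^2}$ back into $\|v\|_{H^1}$ via \eref{estimate:H1}'', since \eref{estimate:H1} bounds $\|v\|_{H^1}$ by $\|f\|_{L^2}$, not the reverse. The honest interior estimate from $\Delta v=f-\na\ln\sigma\cdot\na v$ gives
\[
\|v\|_{H^2(\widetilde{\widetilde\Om})}\le C\bigl(\|f\|_{L^2(\Om)}+(1+\|\na\ln\sigma\|_{C(\Om)})\|v\|_{H^1(\Om)}\bigr),
\]
and the $\|f\|_{L^2}$ term cannot be absorbed into $\|v\|_{H^1}$ for arbitrary $f$. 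The paper sidesteps this by deferring to the cited references; in every subsequent use (Lemma~\ref{lemma:estimate_u-uhat}, estimate \eref{estimate:nauC}) the bound \eref{estimate:H2} is immediately chained with \eref{estimate:H1}, so the $\|f\|_{L^2}$ contribution reappears on the right anyway and no harm is done. Just be aware that \eref{estimate:H2} as stated, without an $\|f\|_{L^2}$ term, is not what your argument actually proves.
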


\begin{proof}
Noting that $v|_{\mE^-}=\frac{\p v}{\p{\bf n}}|_{\p\Om\setminus\mE^{\pm}}=0$, by multiplying $v$ on both sides of the equation in \eref{eq:forwardPDE_general}, it follows that
\begin{equation}\label{process_2}
  \int_\Om \sigma(\r)|\na v(\r)|^2 d\r = -\int_\Om \sigma(\r)f(\r)v(\r)d\r + \int_{\mE^+}v(\r)\sigma(\r)\f{\p v(\r)}{\p \n}dS.
\end{equation}
From the boundary conditions in \eref{eq:forwardPDE_general}, we obtain
\begin{equation*}
   \left|\int_{\mE^+}v(\r)\sigma(\r)\f{\p v(\r)}{\p \n}dS\right| = I \left|v|_{\mE^+}\right| = \f{I}{|\mE^+|}\left|\int_{\mE^+}v(\r)dS\right|\le \frac{I}{|\mE^+|^{1/2}}\|v\|_{L^2(\p\Om)},
\end{equation*}
noticing that $v|_{\mE^+}$ is a constant function. Therefore, we have from \eref{process_2} that
$$\|\na v\|^2_{L^2(\Om)}\le \frac{\sigma_+}{\sigma_-}\|f\|_{L^2(\Om)}\|v\|_{L^2(\Om)}+\frac{I}{\sigma_-|\mE^+|^{1/2}}\|v\|_{H^1(\Om)}$$
due to the trace theorem. On the other hand, since $v|_{\mE^-}=0$, the Poincar\'e inequality says that there exists a constant $C_p$ depending only on $\Om$ such that $\|v\|_{H^1(\Om)}\le C_p\|\na v\|_{L^2(\Om)}$. So the above estimate immediately leads to
\eref{estimate:H1}.

The estimates \eref{estimate:H2} and \eref{estimate:Holder} come from \eref{estimate:H1} and the interior regularity results for elliptic PDEs \cite{Gilbarg2001} (see also \cite{Liu2007} and \cite{Liu2010}).
The proof is complete.
\end{proof}

\begin{lemma}\label{lemma:estimate_u-uhat}
Suppose that $\sigma$ satisfies the condition in Lemma \ref{lemma:regularity_forward} and $\widehat \sigma\in C^1(\overline \Om)$ satisfies $0<\widehat\sigma_-:=\inf_\Om \widehat\sigma \leq \widehat\sigma \leq \widehat\sigma_+:=\sup_\Om \widehat\sigma<+\infty $.
Let $u$ and $\widehat u$ be the solutions of the equation \eqref{eq:forwardPDE} for the conductivities $\sigma$ and $\widehat \sigma$ respectively with the injection current $(I,\mE^\pm)$. Then there exists a positive function $C_5$ depending on $(\sigma,\widehat \sigma)$ such that
\begin{equation}\label{estimate:na(un-u*)Holder}
\|\na(u - \widehat u)\|_{C^{0,\alpha}(\widetilde \Om)} \leq C_5\left\|\na\ln\f{\sigma}{\widehat \sigma} \right\|_{C(\widetilde\Om)},
\end{equation}
where $C_5$ has the following form
\begin{equation*}
\begin{split}
  C_5 = &C_4(\|\na\ln\widehat \sigma\|_{C(\Om)})(C_sC_3(\|\na\ln\widehat \sigma\|_{C(\Om)})C_2\f{\widehat\sigma_+}{\widehat\sigma_-}|\Om|^{1/2}+1)\times\\
  &C_4(\|\na\ln\sigma\|_{C(\Om)})C_s
  C_3(\|\na\ln\sigma\|_{C(\Om)})C_2\f{I}{\sigma_-|\mE^+|^{1/2}}.
\end{split}
\end{equation*}
\end{lemma}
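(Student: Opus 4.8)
The plan is to derive an elliptic equation satisfied by the difference $w := u - \widehat u$, apply the Hölder estimate \eref{estimate:Holder} from Lemma \ref{lemma:regularity_forward} to $w$, and then control the right-hand side that arises in terms of $\|\na\ln(\sigma/\widehat\sigma)\|_{C(\widetilde\Om)}$. First I would observe that, since $\na\cdot(\sigma\na u)=0$ and $\na\cdot(\widehat\sigma\na\widehat u)=0$, subtracting and rearranging gives $\na\cdot(\sigma\na w) = -\na\cdot((\sigma-\widehat\sigma)\na\widehat u)$, which after dividing by $\sigma$ and using $\na\cdot(\sigma\na w)=\sigma(\Delta w + \na\ln\sigma\cdot\na w)$ can be written in the form $\na\cdot(\sigma\na w)=\sigma f$ with a source $f$ built from $\na\ln(\sigma/\widehat\sigma)$, $\na\widehat u$, $\Delta\widehat u$, and $\na\ln\widehat\sigma$ (the precise grouping is chosen so that $f$ depends only on already-estimated quantities). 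Crucially, $w$ satisfies the same boundary conditions as in \eref{eq:forwardPDE} but with $I$ replaced by $0$, since both $u$ and $\widehat u$ carry total current $I$ and vanish on $\mE^-$; so $w$ solves a problem of the type \eref{eq:forwardPDE_general} with zero boundary current.

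Next I would apply Lemma \ref{lemma:regularity_forward} to $w$: estimate \eref{estimate:H1} with $I=0$ gives $\|w\|_{H^1(\Om)}\le C_2\frac{\sigma_+}{\sigma_-}\|f\|_{L^2(\Om)}$, estimate \eref{estimate:H2} upgrades this to an $H^2(\widetilde{\widetilde\Om})$ bound, a Sobolev embedding in two dimensions gives $\|w\|_{C^{0,\alpha}(\widetilde{\widetilde\Om})}\le C_s\|w\|_{H^2(\widetilde{\widetilde\Om})}$, and finally \eref{estimate:Holder} gives $\|\na w\|_{C^{0,\alpha}(\widetilde\Om)}\le C_4(\|w\|_{C^{0,\alpha}(\widetilde{\widetilde\Om})}+\|f\|_{L^2(\Om)})$. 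Chaining these inequalities produces exactly the nested structure of constants $C_4(\cdots)(C_sC_3(\cdots)C_2\cdots+1)C_4(\cdots)C_sC_3(\cdots)C_2\cdots$ appearing in the statement of $C_5$, provided the $L^2(\Om)$ norm of $f$ is bounded by a constant (depending on $\widehat\sigma$ through $\|\na\widehat u\|$ and $\|\Delta\widehat u\|$, which by Lemma \ref{lemma:regularity_forward} applied to $\widehat u$ are controlled by $C_3(\|\na\ln\widehat\sigma\|_{C(\Om)})C_2\frac{\widehat\sigma_+}{\widehat\sigma_-}\frac{I}{\widehat\sigma_-|\mE^+|^{1/2}}$ and the domain measure $|\Om|^{1/2}$) times $\|\na\ln(\sigma/\widehat\sigma)\|_{C(\widetilde\Om)}$.

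The step I expect to be the main obstacle is the bookkeeping in the source term: one must write $f$ so that every factor of $\widehat u$ enters only through quantities already bounded in Lemma \ref{lemma:regularity_forward} (namely $\|\widehat u\|_{H^1(\Om)}$ and $\|\widehat u\|_{H^2(\widetilde{\widetilde\Om})}$-type norms), and so that the \emph{only} occurrence of $\sigma-\widehat\sigma$ is through $\na\ln(\sigma/\widehat\sigma)$ rather than through $\sigma-\widehat\sigma$ directly — using that $\sigma-\widehat\sigma = \sigma(1-\widehat\sigma/\sigma)$ and that the mean-value form of $\ln$ converts $\|1-\widehat\sigma/\sigma\|$ bounds into $\|\na\ln(\sigma/\widehat\sigma)\|$ bounds after integrating from the boundary where $\sigma=\widehat\sigma=\sigma_b$. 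A secondary technical point is that $f$ involves $\Delta\widehat u$, which is only an $L^2(\widetilde{\widetilde\Om})$ object a priori; this is why the domains are nested as $\widetilde\Om\subset\subset\widetilde{\widetilde\Om}\subset\subset\Om$ and why the $\|f\|_{L^2(\Om)}$ appearing in \eref{estimate:Holder} must in fact be read as an $L^2$ norm over the slightly larger interior domain — one checks that Lemma \ref{lemma:regularity_forward}'s interior estimates are exactly set up to accommodate this. Once these bounds are assembled, the constant $C_5$ has precisely the stated form and the proof is complete.
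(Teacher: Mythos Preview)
Your overall strategy---derive an equation for $w=u-\widehat u$ of the type \eref{eq:forwardPDE_general} with $I=0$, then chain the estimates of Lemma~\ref{lemma:regularity_forward} ($H^1\to H^2\to$ Sobolev $\to$ H\"older) on $w$ and separately bound the potential appearing in the source---is exactly what the paper does. Two points of divergence are worth noting.

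First, the paper makes the opposite choice of leading coefficient: it writes
\[
\na\cdot\bigl(\widehat\sigma\,\na(u-\widehat u)\bigr) \;=\; \widehat\sigma\,\na\ln\tfrac{\widehat\sigma}{\sigma}\cdot\na u,
\]
with coefficient $\widehat\sigma$ and source involving $\na u$, whereas you take coefficient $\sigma$ and source involving $\na\widehat u$. Both are valid, but only the paper's choice reproduces the stated $C_5$ verbatim: in that formula the $C_3,C_4$ factors coming from the $w$-equation depend on $\|\na\ln\widehat\sigma\|_{C(\Om)}$, while those coming from the bound on $\|\na u\|_{C(\widetilde\Om)}$ depend on $\|\na\ln\sigma\|_{C(\Om)}$ and $\sigma_-$. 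Your version would produce the same inequality with the roles of $\sigma$ and $\widehat\sigma$ swapped in $C_5$.

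Second, the ``main obstacle'' you anticipate---converting $\sigma-\widehat\sigma$ into $\na\ln(\sigma/\widehat\sigma)$ by a mean-value/boundary-integration argument, and coping with $\Delta\widehat u$ as a merely $L^2$ object---does not arise. The clean form above drops out of one algebraic step: expand $\na\cdot(\widehat\sigma\na u)=\na\cdot((\widehat\sigma-\sigma)\na u)$ and use $\sigma\Delta u=-\na\sigma\cdot\na u$ (from $\na\cdot(\sigma\na u)=0$) to eliminate $\Delta u$ entirely. The source is then $f=\na\ln(\widehat\sigma/\sigma)\cdot\na u$, manifestly bounded in $C(\widetilde\Om)$ by $\|\na\ln(\sigma/\widehat\sigma)\|_{C(\widetilde\Om)}\,\|\na u\|_{C(\widetilde\Om)}$; since $\sigma=\widehat\sigma$ on $\Om\setminus\widetilde\Om$, its $L^2(\Om)$ norm localizes to $\widetilde\Om$, giving the $|\Om|^{1/2}$ factor. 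No second derivatives of $u$ or $\widehat u$ enter the source, and no detour through $|\sigma-\widehat\sigma|$ is needed.
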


\begin{proof}
Using $\na\cdot(\widehat\sigma\na\widehat u)=\na\cdot(\sigma\na u)=0$, straightforward calculations verify that $u - \widehat u$ satisfies
\begin{eqnarray*}\label{eq:un-u*}
    \begin{cases}
      \na\cdot(\widehat\sigma \na(u- \widehat u)) = \widehat\sigma \na\ln \f{\widehat\sigma}{\sigma}\cdot \na u \qquad \mbox{in }\Om\\
      \widehat\sigma \na (u- \widehat u)\cdot \n|_{\p \Om \setminus \mE^\pm} = 0 \\
      \int_{\mE^\pm} \widehat \sigma \na (u- \widehat u)\cdot\n dS = 0 \\
      \na (u- \widehat u)\times \n |_{\mE^\pm} = {\bf 0} \\
      (u- \widehat u)|_{\mE^-} = 0.
    \end{cases}
\end{eqnarray*}
Applying Lemma \ref{lemma:regularity_forward} to this problem, we obtain
\begin{eqnarray}\label{liu53}
\|\na(\widehat u - u)\|_{C^{0,\alpha}(\widetilde \Om)} &\leq&
  C_4(\|\na\ln\widehat \sigma\|_{C(\Om)})\left(\|\widehat u - u\|_{C^{0,\alpha}(\widetilde{\widetilde\Om})}+\left\|\na\ln\f{\widehat \sigma}{\sigma}\cdot \na u\right\|_{C(\Omega)}\right)
  \nonumber\\&=&
  C_4(\|\na\ln\widehat \sigma\|_{C(\Om)})\left(\|\widehat u - u\|_{C^{0,\alpha}(\tilde{\tilde{\Om}})}+\left\|\na\ln\f{\widehat \sigma}{\sigma}\cdot \na u\right\|_{C(\widetilde\Omega)}\right),\qquad\;\qquad\;
\end{eqnarray}
where the last equality comes from the fact that $\sigma = \widehat\sigma$ in $\Omega\setminus \widetilde\Omega$.

Using the Sobolev embedding theorem and Lemma \ref{lemma:regularity_forward}, we have
\begin{eqnarray}\label{estimate:u-uhatHolder}
  \|\widehat u - u\|_{C^{0,\alpha}(\widetilde{\widetilde{\Om}})} &\leq& C_s \|\widehat u - u\|_{H^2(\widetilde{\widetilde{\Om}})} \nonumber
  \\& \leq& C_sC_3(\|\na\ln\widehat\sigma\|_{C(\Om)}) C_2\f{\widehat\sigma_+}{\widehat\sigma_-}\left\|\na\ln \f{\widehat \sigma}{\sigma}\cdot\na u\right\|_{L^2(\Om)} \nonumber
  \\&\le& C_s C_3(\|\na\ln\widehat\sigma\|_{C(\Om)})C_2\f{\widehat\sigma_+}
  {\widehat\sigma_-}|\Om|^{1/2}\left\|\na\ln \f{\widehat \sigma}{\sigma}\cdot\na u\right\|_{C({\widetilde \Om})}
\end{eqnarray}
since $\widehat\sigma=\sigma$ in $\Om\setminus\widetilde\Om$.
Hence \eref{liu53} and \eref{estimate:u-uhatHolder} generate
\begin{eqnarray}\label{estimate:na(u-uhat)Holder1}
& &\|\na(\widehat u - u)\|_{C^{0,\alpha}(\widetilde{\Om})} \leq C_4(\|\na\ln\widehat\sigma\|_{C(\Om)})\times
\nonumber\\
 & & \qquad \qquad \left(C_sC_3(\|\na\ln\widehat\sigma\|_{C(\Om)})C_2\f{\widehat\sigma_+}
  {\widehat\sigma_-}|\Om|^{1/2}+1\right)\left\|\na\ln\f{\widehat \sigma}{\sigma}\cdot \na u\right\|_{C(\widetilde\Omega)}.
\end{eqnarray}
On the other hand, by the Sobolev embedding theorem and \eref{estimate:H1}-\eref{estimate:Holder}, we obtain
\begin{eqnarray}\label{estimate:nauC}
  \|\na u\|_{C(\widetilde \Om)} &\leq& \|\na u\|_{C^{0,\alpha}(\widetilde \Om)} \nonumber \\
  &\leq& C_4(\|\na\ln\sigma\|_{C(\Om)})\|u\|_{C^{0,\alpha}(\widetilde{\widetilde \Om})} \nonumber\\&\leq&
  C_4(\|\na\ln\sigma\|_{C(\Om)})C_s\|u\|_{H^2(\widetilde{\widetilde \Om})}
  \nonumber\\&\leq& C_4(\|\na\ln\sigma\|_{C(\Om)})C_sC_3(\|\na\ln\sigma\|_{C(\Om)})\|u\|_{H^1(\Om)}
  \nonumber\\&\leq& C_4(\|\na\ln\sigma\|_{C(\Om)})C_sC_3(\|\na\ln\sigma\|_{C(\Om)})C_2\f{I}{\sigma_-|\mE^+|^{1/2}}.
\end{eqnarray}
Combining \eref{estimate:na(u-uhat)Holder1} and \eref{estimate:nauC}, we obtain  \eref{estimate:na(un-u*)Holder}.
\end{proof}

We will subsequently denote $\sigma^*$ as the exact conductivity to be reconstructed, $\J^*$ to be the corresponding current density, $u^* = u[\sigma^*]$ as the corresponding voltage potential and $B_z = B_z[\sigma^*]$ as the corresponding measurable $z$-component of magnetic flux density. The next result ensures the regularity of our iterative sequence.
\begin{lemma}\label{lemma:C1_sigma_n}
For the inversion input $B_z=B_z[\sigma^*]$ corresponding to the exact conductivity $\sigma^*\in \mathcal{A}[\epsilon_0,\sigma^0_{\pm},\sigma_b]$, if the initial value $\sigma^0\in \mathcal{A}[\epsilon_0,\sigma^0_{\pm},\sigma_b]$, the iterative sequence $\{\sigma^n: n=1,2,\cdots\}$ obtained by \eref{algorithm:SCHBz} and \eref{sigma_n+1} has the regularity $\sigma^n\in C(\overline{\Om})\bigcap C^1(\widetilde\Om)\bigcap C^1(\Om\setminus\overline{\widetilde\Om})$. Moreover, if $\sigma_b$ is a constant in $\Om\setminus\overline{\widetilde\Om}$,
$\sigma^n\in C^1(\overline{\Om})$.
\end{lemma}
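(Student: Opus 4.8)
The plan is to argue by induction on $n$. For $n=0$ there is nothing to prove, since $\sigma^0\in\mathcal{A}[\epsilon_0,\sigma^0_\pm,\sigma_b]\subset C^1(\overline\Om)$. Assume $\sigma^n$ has the claimed regularity. Then $\sigma^n$ is $C^1$ up to $\p\widetilde\Om$ from each side and continuous across it, hence Hölder continuous on $\overline\Om$, and, since $\ln\sigma^n$ is bounded on the compact set $\overline\Om$, we also have $0<\sigma_-^n\le\sigma^n\le\sigma_+^n<\infty$ for some constants. It is convenient to carry these positivity bounds (which need not be uniform in $n$ — uniformity is a matter for the convergence theorem, not for this lemma) together with a Hölder modulus for the gradients on each subregion along the induction, so that the hypotheses of the regularity estimates used below are available at each step.

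For the inductive step I would proceed through the following chain. First, since $\sigma^n$ is Hölder continuous on $\overline\Om$ and bounded above and below by positive constants, the forward problem \eref{eq:forwardPDE} with conductivity $\sigma^n$ is uniquely solvable \cite{Somersalo1992} and, by the interior elliptic regularity behind Lemma \ref{lemma:regularity_forward} (see \cite{Gilbarg2001}), $u^n:=u[\sigma^n]\in C^{1,\alpha}_{\mathrm{loc}}(\Om)$ for every $\alpha\in(0,1)$; in particular $\na u^n$ is Hölder continuous on $\widetilde\Om$ and, using $\na\cdot(\sigma^n\na u^n)=0$, one has $\sigma^n\Delta u^n=-\na\sigma^n\cdot\na u^n\in C^{0,\alpha}(\widetilde\Om)$. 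Second, the current $\J$ entering \eref{eq-9}--\eref{liu001} is obtained directly from the exact datum $B_z=B_z[\sigma^*]$ by Theorem \ref{SolvingJ}, so $\J=\J[\sigma^*]=-\sigma^*\na u^*\in C^{0,\alpha}_{\mathrm{loc}}(\Om)$, while \eref{infimum_nablau} applied to $\sigma^*\in\mathcal{A}[\epsilon_0,\sigma^0_\pm,\sigma_b]$ gives $|\J|=\sigma^*|\na^\bot u^*|\ge\sigma_-^0\,\xi_{\sigma^*}>0$ throughout $\widetilde\Om$, so $\J/|\J|^2\in C^{0,\alpha}(\overline{\widetilde\Om})$. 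Third, combining these with the (data-dependent but fixed) regularity of $\Delta B_z$ on $\widetilde\Om$, the vector fields $\mathbf t^n=\frac{1}{|\J|^2}\sigma^n\Delta u^n\,\J^T$ and $\mathbf t^*=\frac{1}{\mu_0|\J|^2}\Delta B_z\,(\J^\bot)^T$, and hence $\mathbf s^n=\mathbf t^n-\mathbf t^*$, belong to $C^{0,\alpha}(\widetilde\Om)$. Fourth, $\ln\sigma^{n+1}$ then solves the Poisson problem \eref{algorithm:SCHBz} (equivalently \eref{liu001}) with a $C^{0,\alpha}$ source in divergence form and the continuous Dirichlet datum $\ln\sigma_b$ on $\p\widetilde\Om$; interior Schauder estimates give $\ln\sigma^{n+1}\in C^{1,\alpha}_{\mathrm{loc}}(\widetilde\Om)$, hence $\sigma^{n+1}=\exp(\ln\sigma^{n+1})\in C^1(\widetilde\Om)$ and is strictly positive there, while $L^p$ (or energy) estimates up to $\p\widetilde\Om$ together with continuity of the datum give $\ln\sigma^{n+1}\in C(\overline{\widetilde\Om})$. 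Finally, by \eref{sigma_n+1} we set $\sigma^{n+1}=\sigma_b$ on $\Om\setminus\overline{\widetilde\Om}$, which is $C^1$ there by hypothesis; since the two pieces share the same trace $\ln\sigma_b$ on $\p\widetilde\Om$, the glued function lies in $C(\overline\Om)\cap C^1(\widetilde\Om)\cap C^1(\Om\setminus\overline{\widetilde\Om})$, which closes the induction.

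For the last assertion, if $\sigma_b$ is constant on $\Om\setminus\overline{\widetilde\Om}$ then its trace on $\p\widetilde\Om$ is constant, hence $C^\infty$; combining this with $\mathbf s^n\in C^{0,\alpha}(\widetilde\Om)$ and the smoothness of $\p\widetilde\Om$, the Schauder estimate now holds up to the boundary, so $\ln\sigma^{n+1}\in C^{1,\alpha}(\overline{\widetilde\Om})$ and $\sigma^{n+1}$ is $C^1$ up to $\p\widetilde\Om$ from inside; together with the constant (hence $C^1$) exterior piece this upgrades the conclusion to $\sigma^{n+1}\in C^1(\overline\Om)$.

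The main obstacle is the regularity bookkeeping in the third step: one must verify that $\mathbf s^n$ is genuinely Hölder continuous in $\widetilde\Om$, which requires simultaneously controlling the interior Hölder moduli of $\na u^n$ and of $\na\ln\sigma^n$ (so that $\sigma^n\Delta u^n$ is Hölder, not merely continuous) and the regularity of $\Delta B_z$ inherited from the input data — this is the step where the precise smoothness one assumes on $\sigma^*$ enters, and the inductive hypothesis must be phrased with exactly enough regularity (a Hölder modulus for the gradients on each subregion, plus positivity bounds) for the loop to close. A secondary point is that each $\sigma^n$ has only a $C^1$-type jump across $\p\widetilde\Om$, so the forward estimates of Lemma \ref{lemma:regularity_forward} are applied to a coefficient that is globally only Hölder; one should remark that those interior estimates depend only on ellipticity and an interior modulus of continuity and hence remain available. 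Finally, the boundary regularity and gluing needed for the $C^1(\overline\Om)$ refinement in the constant-$\sigma_b$ case should be handled with the same care.
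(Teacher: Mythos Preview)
Your inductive argument for the first assertion is essentially the paper's: both apply interior regularity for $u^n = u[\sigma^n]$ and then for the Poisson problem \eref{algorithm:SCHBz}, reading $\mathbf s^n\in C^{0,\alpha}(\widetilde\Om)$ off the formula \eref{eq-9}.

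For the constant-$\sigma_b$ refinement, however, your gluing step has a genuine gap. Boundary Schauder gives $\ln\sigma^{n+1}\in C^{1,\alpha}(\overline{\widetilde\Om})$, and the exterior piece is constant, but $C^1$ from each side together with $C^0$ across $\p\widetilde\Om$ does \emph{not} yield $\sigma^{n+1}\in C^1(\overline\Om)$: the tangential derivatives match (both vanish, thanks to the constant Dirichlet datum), but the normal derivative of $\ln\sigma^{n+1}$ from inside on $\p\widetilde\Om$ need not vanish. The Dirichlet problem \eref{algorithm:SCHBz} imposes no Neumann condition, and since $\na\ln\sigma^{n+1}$ agrees with $\mathbf s^n$ only up to a divergence-free correction, even the observation that $\mathbf s^n=0$ on $\p\widetilde\Om$ would not close the gap.

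The paper avoids this by a different device: it runs the same iteration on a strictly larger domain $\widehat{\widetilde\Om}$ with $\widetilde\Om\subset\subset\widehat{\widetilde\Om}\subset\subset\Om$ and observes, via the rewritten right-hand side in \eref{liu61}, that the source vanishes identically on the annulus $\widehat{\widetilde\Om}\setminus\widetilde\Om$ whenever both $\na\sigma^*$ and $\na\widehat{\widetilde\sigma}^n$ vanish there; hence the enlarged iterate $\widehat{\widetilde\sigma}^{n+1}$ equals $\sigma_b$ on the annulus. Its restriction to $\overline{\widetilde\Om}$ is then identified with $\sigma^{n+1}$ by uniqueness for the Dirichlet problem on $\widetilde\Om$, so the $C^1$ regularity across $\p\widetilde\Om$ comes from \emph{interior} regularity on $\widehat{\widetilde\Om}$, sidestepping the gluing issue entirely. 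This domain-enlargement idea is what your argument is missing.
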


\begin{proof}
For $\sigma^*\in \mathcal{A}[\epsilon_0,\sigma^0_{\pm},\sigma_b]\subset C^1(\overline{\Om})$, it follows from \eref{liu15} and Lemma \ref{lemma:regularity_forward} that $\Delta B_z[\sigma^*]\in C(\widetilde{\widetilde{\Om}})$ and then ${\bf J}\in C(\widetilde \Om)$ by Theorem \ref{SolvingJ}.

For the initial guess $\sigma^0\in \mathcal{A}[\epsilon_0,\sigma^0_{\pm},\sigma_b]\subset C^1(\overline{\Om})\subset C(\overline{\Om})\bigcap C^1(\widetilde\Om)\bigcap C^1(\Om\setminus\overline{\widetilde\Om})$, interior regularity for elliptic equations implies that $u^0=u[\sigma^0]\in C^{1,\alpha}(\overline{\widetilde{\Omega}})$ for $\alpha>0$ \cite{Gilbarg2001,Kim2002}. Then \eref{eq-9} and the identity $\sigma^1 \Delta u^1 \equiv \J[\sigma^1]\cdot\na\ln\sigma^1$ imply $\na\ln\sigma^1\in C(\overline{\widetilde\Om})$, i.e, $\ln\sigma^1\in C^1(\overline{\widetilde\Om})$. Since $\sigma^1\in C(\overline\Om)$ due to $\sigma^1=\sigma_b$ in $\p\widetilde\Om$, we know
that $\sigma^1\in C(\overline{\Om})\bigcap C^1(\widetilde\Om)$. On the other hand, since we define $\sigma^1=\sigma_b\in C^1(\Om\setminus\widetilde\Om)$ in $\Om\setminus\widetilde\Om$, we have proven that $\sigma^1\in C(\overline{\Om})\bigcap C^1(\widetilde\Om)\bigcap C^1(\Om\setminus\overline{\widetilde\Om})$.

Now, for $\sigma^1 \in C(\overline{\Om})\bigcap C^1(\widetilde\Om)\bigcap C^1(\Om\setminus\overline{\widetilde\Om})$, the same process in terms of \eref{eq-9}
ensures $\sigma^2\in C(\overline{\Om})\bigcap C^1(\widetilde\Om)\bigcap C^1(\Om\setminus\overline{\widetilde\Om})$. So induction arguments prove $\sigma^n\in C(\overline{\Om})\bigcap C^1(\widetilde\Om)$ for all $n=1,2,\cdots$.


Let us assume further that $\sigma_b$ is a constant in $\Om\setminus\widetilde \Om$. For the same initial value $\sigma^0\in \mathcal{A}[\epsilon_0,\sigma^0_{\pm},\sigma_b]$,  consider the iterative sequence $\{\widehat{\widetilde\sigma}^n: n=1,2,\cdots\}$ defined by
\begin{eqnarray}
  \na \ln \widehat{\widetilde\sigma}^{n+1}&=&
  \left[
    \begin{array}{cc}
      \widehat{\widetilde\sigma}^n \Delta u[\widehat{\widetilde\sigma}^n] &  -\frac{1}{\mu_0 } \Delta B_z \\
      \frac{1}{\mu_0} \Delta B_z  & \widehat{\widetilde\sigma}^n \Delta u[\widehat{\widetilde\sigma}^n] \\
    \end{array}
  \right]
  \f{\J^*}{|\J^*|^2}\nonumber\\
  &\equiv&\left[
    \begin{array}{cc}
      -\na\widehat{\widetilde\sigma}^n\cdot \na u[\widehat{\widetilde\sigma}^n] &  -{(\J^*)}^\bot\cdot\na\ln\sigma^*\\
      {(\J^*)}^\bot\cdot\na\ln\sigma^*  &-\na\widehat{\widetilde\sigma}^n\cdot \na u[\widehat{\widetilde\sigma}^n]  \\
    \end{array}
  \right]
  \f{\J^*}{|\J^*|^2}\quad \mbox{ in }\widehat{\widetilde\Om},\label{liu61}\\
  \ln \widehat{\widetilde\sigma}^{n+1}&=&\ln\sigma_b, \quad \mbox{ on }\partial\widehat{\widetilde\Om}\label{liu62}
\end{eqnarray}
in a larger domain $\widehat{\widetilde\Om}$ satisfying
$\Om\supset\widetilde{\widetilde\Om}\supset\widehat{\widetilde\Om}\supset\widetilde\Om$, and
\begin{eqnarray*}
\widehat{\widetilde\sigma}^{n+1}({\bf r}):=
\begin{cases}
\widehat{\widetilde\sigma}^{n+1}({\bf r}), &{\bf r}\in \widehat{\widetilde\Om},\\
\sigma_b, &{\bf r}\in\Om\setminus \widehat{\widetilde\Om}.
\end{cases}
\end{eqnarray*}
Using the same arguments for $\sigma^n$ in $\widetilde\Om$, we know that $\widehat{\widetilde\sigma}^{n}\in C^1(\widehat{\widetilde\Om})$.
For $\widehat{\widetilde\sigma}^0=\sigma^0\in \mathcal{A}[\epsilon_0,\sigma^0_{\pm},\sigma_b]$, since $\widehat{\widetilde\sigma}^0\equiv\sigma^*\equiv\sigma_b (constant)$ in $\widehat{\widetilde\Om}\setminus\widetilde\Om$, we have $\na \ln \widehat{\widetilde\sigma}^{1}\equiv 0$ in $\widehat{\widetilde\Om}\setminus\widetilde\Om$ from \eref{liu61}. Therefore the boundary condition \eref{liu62} yields $\widehat{\widetilde\sigma}^{1}\equiv \sigma_b$ in $\widehat{\widetilde\Om}\setminus\widetilde\Om$.
So, $\widehat{\widetilde\sigma}^{1}$ satisfies the same equation in $\widetilde\Om$ and has the same boundary value $\sigma_b$ on $\p\widetilde\Om$ as $\sigma^{1}$. Therefore $\widehat{\widetilde\sigma}^{1}\equiv \sigma^{1}$ in $\overline{\widetilde\Om}$. On the other hand, we have $\widehat{\widetilde\sigma}^{1}\equiv \sigma_b\equiv \sigma^{1}$ in $\widehat{\widetilde\Om}\setminus\widetilde\Om$, so we finally have
$\widehat{\widetilde\sigma}^{1}\equiv \sigma^{1}$ in $\widehat{\widetilde\Om}$. By the regularity of $\widehat{\widetilde\sigma}^{1}$, we know that $\sigma^{1}\in C^1(\widehat{\widetilde\Om})$.

Now, by induction arguments, we know $\sigma^{n}\in C^1(\widehat{\widetilde\Om})$. Noticing
$\sigma^{n}\equiv \sigma_b $(constant) in $\Om\setminus\widetilde\Om$, we finally have $\sigma^n\in C^1(\overline{\Om})$.
\end{proof}

From Lemma \ref{lemma:C1_sigma_n}, if $\sigma_b$ is a constant, $\sigma^n$ is of $C^1$ smoothness in $\overline{\Om}$. In the following, we will only consider this special case. Moreover, under the assumption that $\sigma_b$ is a constant we can further derive that $|\na u[\sigma]|$ has a uniform positive lower bound $\xi_0>0$ in $\widetilde \Om$ for $\sigma\in \mathcal{A}[\ep_0,\sigma^0_\pm,\sigma_b]$ if $\ep_0$ is smaller than a given constant. This result is stated in the following Corollary \ref{corollary:uniform_lower_bound}.

\begin{corollary}\label{corollary:uniform_lower_bound}
Let $\sigma_b$ be a positive constant. Then there exists a positive constant $\eta_0\in (0,\f{1}{4K})$ such that, if $\epsilon_0<\eta_0$, the solution $u[\sigma]$ to \eref{eq:forwardPDE} has the estimate
\begin{equation}\label{estimate:lowergradient}
  \inf_{\widetilde \Om} |\na u[\sigma]| \geq \xi_0 >0
\end{equation}
for $\sigma\in \mathcal{A}[\ep_0,\sigma^0_\pm,\sigma_b]$ uniformly, where the constant $\xi_0$ depends only on $\eta_0,\widetilde \Om,\mE^\pm,\sigma^0_\pm$ and $\sigma_b$, but independent of $\sigma$ itself.
\end{corollary}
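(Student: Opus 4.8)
The plan is to establish the uniform lower bound by a compactness-and-continuity argument combined with the known scalar lower bound from Theorem~\ref{thm:uniqueness} (inequality \eref{infimum_nablau}). First I would recall from \eref{infimum_nablau} that for every $\sigma\in\mathcal{A}[\ep_0,\sigma^0_\pm,\sigma_b]$ one has $\inf_{\widetilde\Om}|\na^\bot u[\sigma]|=\inf_{\widetilde\Om}|\na u[\sigma]|\ge C_1 I\,(u[\sigma]|_{\mE^+})^{1/2}>0$, so the issue is only to produce a bound \emph{uniform} in $\sigma$. The quantity $u[\sigma]|_{\mE^+}$ is a constant (by the electrode boundary condition in \eref{eq:forwardPDE}), and I would show it is bounded below away from $0$ uniformly over the admissible set. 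To this end, note $u[\sigma]|_{\mE^+}=\int_{\mE^-}^{\mE^+}$ of the tangential-to-boundary increment; more usefully, multiply the forward equation by $u[\sigma]$ and integrate to get $\int_\Om\sigma|\na u[\sigma]|^2 = I\,u[\sigma]|_{\mE^+}$, so $u[\sigma]|_{\mE^+}=\frac{1}{I}\int_\Om\sigma|\na u[\sigma]|^2\ge\frac{\sigma_-^0}{I}\|\na u[\sigma]\|_{L^2(\Om)}^2$, and the energy is bounded below uniformly because the Neumann data $(I,\mE^\pm)$ is fixed and $\sigma\le\sigma_+^0$ (a standard variational lower bound, e.g. testing against a fixed competitor, gives $\|\na u[\sigma]\|_{L^2(\Om)}^2\ge c(\Om,\mE^\pm,\sigma_+^0)\,I^2>0$). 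Combining, $u[\sigma]|_{\mE^+}\ge\gamma_0>0$ with $\gamma_0$ depending only on $\Om,\mE^\pm,\sigma^0_\pm$, hence $\inf_{\widetilde\Om}|\na u[\sigma]|\ge C_1 I\gamma_0^{1/2}=:\xi_0>0$ uniformly.

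The remaining point is to verify that the constant $C_1$ from Proposition~2.10 of \cite{Alessandrini2004} can indeed be taken uniformly over $\mathcal{A}[\ep_0,\sigma^0_\pm,\sigma_b]$; this is where the hypothesis $\sigma_b$ constant and $\ep_0<\eta_0$ enters. I would invoke that proposition with its quantitative dependence: its geometric-critical-point estimate for the gradient of a solution to a divergence-form equation depends only on the ellipticity ratio $\sigma_+^0/\sigma_-^0$, on a modulus of continuity of $\sigma$ (controlled here by $\|\na\ln\sigma\|_{C(\overline\Om)}\le\ep_0$), on the domains $\widetilde\Om\subset\subset\Om$, on $\mE^\pm$, and on the fixed boundary data — none of which vary within the admissible set once $\ep_0$ is fixed. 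The smallness threshold $\eta_0\in(0,\tfrac{1}{4K})$ is chosen so that (i) $\ep_0<\eta_0$ still forces $\mathcal{A}$ to be the admissible set of Theorem~\ref{thm:uniqueness} (so that the hypotheses of \eref{infimum_nablau} apply), and (ii) the conductivity stays close enough to the constant background $\sigma_b$ on the collar $\Om\setminus\widetilde\Om$ that the auxiliary comparison/monotonicity argument for $u[\sigma]|_{\mE^+}$ in the previous paragraph goes through with $\sigma$-independent constants. Since $\sigma_b$ is constant, the geometric constraints on critical points of $u[\sigma]$ near $\p\widetilde\Om$ — which in \cite{Alessandrini2004} drive the lower bound — are inherited from the unperturbed harmonic problem with conductivity $\sigma_b$, and a continuity/perturbation estimate in the $\ep_0$-parameter (using Lemma~\ref{lemma:estimate_u-uhat} to control $\|\na(u[\sigma]-u[\sigma_b\hbox{-extended}])\|_{C^{0,\alpha}(\widetilde\Om)}$ by $\ep_0$) shows no new critical points appear for $\ep_0$ small.

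I expect the main obstacle to be making the dependence of $C_1$ genuinely uniform: Proposition~2.10 of \cite{Alessandrini2004} is stated for a fixed conductivity, so I must either cite the version of Alessandrini–Magnanini / Alessandrini–Nesi where the bound is explicitly in terms of ellipticity and Lipschitz seminorm only, or give a short self-contained argument by contradiction — take a sequence $\sigma_k\in\mathcal{A}$ with $\inf_{\widetilde\Om}|\na u[\sigma_k]|\to 0$, extract via the a priori $C^{1,\alpha}(\widetilde{\widetilde\Om})$ bound of Lemma~\ref{lemma:regularity_forward} a subsequence $\sigma_k\to\sigma_\infty$ in $C(\overline\Om)$ and $u[\sigma_k]\to u[\sigma_\infty]$ in $C^1(\overline{\widetilde\Om})$ with $\sigma_\infty$ still in (the closure of) $\mathcal{A}$, and then conclude $\inf_{\widetilde\Om}|\na u[\sigma_\infty]|=0$, contradicting \eref{infimum_nablau} applied to $\sigma_\infty$. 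This contradiction route avoids any delicate tracking of constants and uses only tools already available in the excerpt; its one subtlety is checking the limit $\sigma_\infty$ still satisfies the constant-background and Lipschitz constraints, which is immediate since those are closed conditions. This completes the plan.
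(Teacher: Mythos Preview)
Your plan circles around the right idea but buries it. The paper's proof is a two-line perturbation argument: apply \eref{infimum_nablau} \emph{once}, to the constant conductivity $\sigma_b$ alone, to obtain a fixed number $\xi_{\sigma_b}:=\inf_{\widetilde\Om}|\na u[\sigma_b]|>0$; then invoke Lemma~\ref{lemma:estimate_u-uhat} with $\widehat\sigma=\sigma_b$ to get $\|\na u[\sigma]-\na u[\sigma_b]\|_{C(\widetilde\Om)}\le C_b\|\na\ln\sigma\|_{C(\Om)}\le C_b\ep_0$, so the triangle inequality gives $|\na u[\sigma]|\ge\xi_{\sigma_b}-C_b\ep_0\ge\tfrac12\xi_{\sigma_b}=:\xi_0$ once $\ep_0<\eta_0:=\min\{\tfrac{1}{4K},\tfrac{\xi_{\sigma_b}}{2C_b}\}$. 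No uniformity-of-$C_1$ issue ever arises, because $C_1$ is only used for the single fixed coefficient $\sigma_b$; the threshold $\eta_0$ and the bound $\xi_0$ drop out explicitly.

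Your main route instead tries to make both $C_1$ and $u[\sigma]|_{\mE^+}$ uniform over all $\sigma\in\mathcal{A}$, which is harder and has gaps as written: (i) ``testing against a fixed competitor'' in a minimization problem gives an \emph{upper} bound on the Dirichlet energy, not the lower bound you need --- what you actually want here is monotonicity of resistance in $\sigma$, or a complementary variational principle; (ii) the dependence of the Alessandrini--Rosset constant $C_1$ on the coefficient is not explicit in the cited statement, so pushing uniformity through requires reopening that proof; (iii) in your compactness fallback, the $C^0$-limit $\sigma_\infty$ of a sequence in $\mathcal{A}$ is only Lipschitz, not obviously $C^1$, so you must still check that \eref{infimum_nablau} applies to it. You do mention the perturbation estimate via Lemma~\ref{lemma:estimate_u-uhat} in your second paragraph --- that \emph{is} the whole proof. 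Promote it from a supporting remark to the main argument and discard the rest.
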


\begin{proof}
From \eref{infimum_nablau} we obtain
\begin{equation*}
  \inf_{\widetilde \Om}|\na u[\sigma_b]| \geq \xi_{\sigma_b}:= C_1 I(u[\sigma_b]|_{\mE^+})^{1/2}.
\end{equation*}
From Lemma \ref{lemma:estimate_u-uhat} we obtain the estimate
\begin{equation}\label{estimate:nau-nau_b}
  \|\na u[\sigma] - \na u[\sigma_b]\|_{C(\widetilde \Om)} \leq \|\na u[\sigma] - \na u[\sigma_b]\|_{C^{0,\alpha}(\widetilde \Om)} \leq C_b \|\na\ln\sigma\|_{C(\Om)}.
\end{equation}
Here $C_b$ is a constant defined as $$C_b=C_4(0)(C_sC_3(0)C_2\f{\sigma^0_+}{\sigma^0_-}|\Om|^{1/2}+1)
\overline{C_4}C_s\overline{C_3}\f{I C_2}{\sigma_-^0|\mE^+|^{1/2}},$$
where $\overline{C_i} = \sup\limits_{s\in (0,\f{1}{4K})}F_i(s)$ for $i=3,4$.
Hence, for $\r\in \widetilde \Om$ we have
\begin{equation*}
  |\na u[\sigma](\r)| \geq |\na u[\sigma_b](\r)| - \|\na u[\sigma_b] - \na u[\sigma]\|_{C(\Om)} \geq \xi_{\sigma_b} - C_b\|\na \ln\sigma\|_{C(\Om)},
\end{equation*}
where $\xi_{\sigma_b} = \inf_{\widetilde \Om}|\na u[\sigma_b]|$.
Let $\eta_0 = \min\{\f{1}{4K},\f{1}{2C_b}\xi_{\sigma_b}\}$ and $\xi_0 = \f{1}{2}\xi_{\sigma_b}$, then if $\ep_0 <\eta_0$ we have
\begin{equation*}
  |\na u[\sigma]| \geq \xi_0\quad  \hbox{in }\widetilde \Omega
\end{equation*}
for $\sigma\in \mathcal{A}[\ep_0,\sigma^0_\pm,\sigma_b]$ uniformly. Hence we have the estimate \eref{estimate:lowergradient}.
This completes the proof.
\end{proof}

Now it is possible to present our main result, the convergence property of the iteration process \eref{algorithm:SCHBz}.

\begin{theorem}\label{theorem:convergence}
Assume that $\sigma^*\in \mathcal{A}[\epsilon_0,\sigma^0_\pm,\sigma_b]$ with $\sigma_b$ a positive constant in $\Om\setminus\widetilde\Om$, while $\{\sigma^n: n=1,2,\cdots\}$ is the sequence generated by \eref{algorithm:SCHBz} and \eref{sigma_n+1} for inversion input $B_z^*$ with constant initial value $\sigma^0 \equiv \sigma_b$ in $\Omega$. Then there exists a constant $\eta\in (0,\eta_0)$ such that for $\ep_0<\eta$, it holds that
  \begin{equation}\label{estimate:main}
  \left\|\ln\f{\sigma^{n}}{\sigma^*}\right\|_{C^{1}(\widetilde \Om)} \leq (K+1)\epsilon_0\theta^{n-1},\quad n=1,2,\cdots,
\end{equation}
where the constant $\theta \in (\f{\sqrt{2}}{2},1)$ depends on $\ep_0$.
\end{theorem}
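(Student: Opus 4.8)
The plan is to prove \eref{estimate:main} by an induction whose true hypothesis is $\|\na v^n\|_{C(\widetilde\Om)}\le\epsilon_0\theta^{n-1}$, where $v^n:=\ln\f{\sigma^n}{\sigma^*}$; note that $v^n$ vanishes on $\p\widetilde\Om$ (both $\sigma^n$ and $\sigma^*$ equal the constant $\sigma_b$ there) and that $\na\sigma^n=\na\sigma^*=0$ on $\p\widetilde\Om$ by Lemma~\ref{lemma:C1_sigma_n} and $\sigma^*\in C^1(\overline\Om)$. A preliminary step is to keep the iterates admissible: if $\|\na v^n\|_{C(\widetilde\Om)}\le\epsilon_0$, then $\|\na\ln\sigma^n\|_{C(\Om)}\le\|\na\ln\sigma^*\|_{C(\Om)}+\|\na v^n\|_{C(\widetilde\Om)}\le2\epsilon_0$, so choosing $\eta$ small enough that $2\eta<\eta_0$ lets Corollary~\ref{corollary:uniform_lower_bound} give $\inf_{\widetilde\Om}|\na u^n|\ge\xi_0>0$ uniformly in $n$, hence $|\J^*|\ge\sigma_-^0\xi_0>0$, and the forward estimates of Lemmas~\ref{lemma:regularity_forward} and \ref{lemma:estimate_u-uhat} then hold with constants uniform in $n$; in particular $\|\na(u^n-u^*)\|_{C^{0,\alpha}(\widetilde\Om)}\le C_5\|\na v^n\|_{C(\widetilde\Om)}$ with $C_5$ uniformly bounded.

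\textbf{Error equation and leading term.} Subtracting the exact identity $\na\ln\sigma^*=\mathbf s[\sigma^*,B_z^*]$, which is precisely \eref{liu52} for the exact data, from the scheme \eref{algorithm:SCHBz} shows that $v^{n+1}$ solves
\begin{equation*}
\Delta v^{n+1}=\na\cdot(\mathbf s^n-\mathbf s^*)\quad\text{in }\widetilde\Om,\qquad v^{n+1}\big|_{\p\widetilde\Om}=0,
\end{equation*}
while a direct computation with \eref{eq-9} gives $\mathbf s^n-\mathbf s^*=\dfrac{\sigma^n\Delta u^n-\sigma^*\Delta u^*}{|\J^*|^2}\,\J^*$, so the residual field points along $\J^*$. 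Using $\sigma\Delta u[\sigma]=-\na\sigma\cdot\na u[\sigma]$ from the forward equation together with $\na\sigma^n-\na\sigma^*=\sigma^n\na v^n+(\sigma^n-\sigma^*)\na\ln\sigma^*$, I would rewrite
\begin{equation*}
\sigma^n\Delta u^n-\sigma^*\Delta u^*=-\sigma^n\,\na v^n\cdot\na u^*+R^n,
\end{equation*}
where $R^n$ gathers the terms carrying a factor $\na\ln\sigma^*$ (of size $\le\epsilon_0$), a factor $\sigma^n-\sigma^*$ (bounded by $C\|\na v^n\|_{C(\widetilde\Om)}$ via a sup-norm Poincar\'e inequality, as $v^n|_{\p\widetilde\Om}=0$), or a factor $\na(u^n-u^*)$ (bounded by $C_5\|\na v^n\|_{C(\widetilde\Om)}$ via Lemma~\ref{lemma:estimate_u-uhat}); since also $\|\na v^n\|_{C(\widetilde\Om)}\le\epsilon_0$, one gets $\|R^n\|_{C(\widetilde\Om)}\le C\epsilon_0\|\na v^n\|_{C(\widetilde\Om)}$. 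Because $\J^*=-\sigma^*\na u^*$, it follows that, up to a factor $\sigma^n/\sigma^*=1+O(\epsilon_0)$ and an $O(\epsilon_0)$ perturbation, $\mathbf s^n-\mathbf s^*$ is the pointwise orthogonal projection of $\na v^n$ onto the direction of $\J^*$.

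\textbf{From the Poisson problem to \eref{estimate:main}.} Since $\mathbf s^n-\mathbf s^*$ vanishes on $\p\widetilde\Om$, the Green representation of $v^{n+1}$ — transferring the divergence onto $\na\Psi$ and controlling the harmonic corrector by the maximum principle — bounds $\|v^{n+1}\|_{C(\widetilde\Om)}$ by $K\|\mathbf s^n-\mathbf s^*\|_{C(\widetilde\Om)}$, with $K$ entering exactly as $\int_{\widetilde\Om}|\na\Psi(\cdot-\mathbf r')|\,d\mathbf r'$; combined with a bound of $\|\na v^{n+1}\|_{C(\widetilde\Om)}$ by the same residual norm (via the same representation/elliptic regularity), this yields $\|v^{n+1}\|_{C^1(\widetilde\Om)}\le(K+1)\|\na v^{n+1}\|_{C(\widetilde\Om)}$. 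The induction starts from $\sigma^0\equiv\sigma_b$: there $\sigma^0\Delta u^0=0$, so $\|\mathbf s^0-\mathbf s^*\|_{C(\widetilde\Om)}=\big\|\tfrac{\sigma^*\Delta u^*}{|\J^*|^2}\J^*\big\|_{C(\widetilde\Om)}\le\|\na\ln\sigma^*\|_{C(\widetilde\Om)}\le\epsilon_0$, giving $\|\na v^1\|_{C(\widetilde\Om)}\le\epsilon_0$ and hence \eref{estimate:main} for $n=1$. It then remains only to prove the one-step contraction $\|\na v^{n+1}\|_{C(\widetilde\Om)}\le\theta\|\na v^n\|_{C(\widetilde\Om)}$ with $\theta\in(\tfrac{\sqrt2}{2},1)$ depending on $\epsilon_0$: together with the base case and $\|v^n\|_{C^1(\widetilde\Om)}\le(K+1)\|\na v^n\|_{C(\widetilde\Om)}$ this closes the induction.

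\textbf{The main obstacle.} The delicate point is exactly this contraction with rate below $1$. The estimates above only give $\|\na v^{n+1}\|_{C(\widetilde\Om)}\le(1+O(\epsilon_0))\|\na v^n\|_{C(\widetilde\Om)}$, because the leading term $-\sigma^n\na v^n\cdot\na u^*$ has coefficient $\approx1$; in fact, if the elliptic solve \eref{algorithm:SCHBz} were replaced by the formal assignment $\na\ln\sigma^{n+1}=\mathbf s^n$, the residual would be reproduced essentially unchanged and the scheme would not converge at all. The gain must come entirely from the curl-removal performed by \eref{algorithm:SCHBz}: $\na v^{n+1}$ is the $L^2(\widetilde\Om)$-orthogonal projection of the residual onto gradients of $H^1_0(\widetilde\Om)$, whereas the residual is, to leading order, pointwise parallel to $\J^*$, and a nonzero field cannot be both — it would be the gradient of an $H^1_0(\widetilde\Om)$ function that is constant along the level curves of $u^*$, hence $\equiv0$ (using $|\na u^*|\ge\xi_0>0$ in $\widetilde\Om$ and the geometry of the level sets of $u^*$, cf.\ the argument behind \eref{infimum_nablau}). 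Making this transversality quantitative — running two consecutive iterations so that the composition of ``project onto the $\J^*$-direction'' and ``project onto gradients of $H^1_0(\widetilde\Om)$'' is shown to contract by a factor $\tfrac12+O(\epsilon_0)$, whose square root is the per-step rate $\theta\in(\tfrac{\sqrt2}{2},1)$ — and carrying the $C^{0,\alpha}$/$C^1$ regularity through all of the above, is where the real work lies.
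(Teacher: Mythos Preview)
Your approach diverges from the paper's at exactly the point you flag as ``the main obstacle,'' and the gap you leave open there is real.

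The paper does \emph{not} exploit the curl-removal of the Poisson solve \eref{algorithm:SCHBz} for the contraction. It works pointwise with the formal relation $\na\ln\sigma^{n+1}=\mathbf s^n$ from \eref{eq-9}, subtracts the exact identity $\na\ln\sigma^*=\mathbf s^*$, and obtains
\[
\na\ln\f{\sigma^{n+1}}{\sigma^*}=\f{\sigma^n\Delta u^n-\sigma^*\Delta u^*}{\sigma^*|\na u^*|}\,\f{\J^*}{|\J^*|}\qquad\text{in }\widetilde\Om,
\]
so the error field is a scalar multiple of $\J^*/|\J^*|$. The paper then decomposes along $\J^*$ and $(\J^*)^\perp$ and writes
\[
\Big\|\na\ln\f{\sigma^{n+1}}{\sigma^*}\Big\|_{C(\widetilde\Om)}
=\Big\|\f{1}{\sqrt2\,|\J^*|}(\J^*+(\J^*)^\perp)\cdot\na\ln\f{\sigma^{n+1}}{\sigma^*}\Big\|_{C(\widetilde\Om)}
=\Big\|\f{\na u^n\cdot\na\sigma^n-\na u^*\cdot\na\sigma^*}{\sqrt2\,\sigma^*|\na u^*|}\Big\|_{C(\widetilde\Om)},
\]
extracting the factor $1/\sqrt2$ in this single algebraic step. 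After that, $\na u^n\cdot\na\sigma^n-\na u^*\cdot\na\sigma^*=\na\sigma^*\cdot\na(u^n-u^*)+\na u^n\cdot\na(\sigma^n-\sigma^*)$ is split into three pieces $\mathcal I_1^n,\mathcal I_2^n,\mathcal I_3^n$: the dominant one is $\mathcal I_2^n=\tfrac{1}{\sqrt2\,\sigma_-^*}\|\na(\sigma^n-\sigma^*)\|_{C(\widetilde\Om)}$, which equals $\big(\tfrac1{\sqrt2}+O(\ep_0)\big)\|\na v^n\|_{C(\widetilde\Om)}$ once $\sigma_+^*/\sigma_-^*\le 1+O(\ep_0)$ is established via a Newton-potential Poincar\'e bound; $\mathcal I_1^n$ and $\mathcal I_3^n$ each carry an explicit factor $\|\na\ln\sigma^*\|_C$ or $\|\na(u^n-u^*)\|_C$ and are $O(\ep_0)\|\na v^n\|_{C(\widetilde\Om)}$ by Lemma~\ref{lemma:estimate_u-uhat}. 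The sum is a contraction factor $G_1(\ep_0)\to\tfrac1{\sqrt2}$ as $\ep_0\to0$, and one chooses $\eta$ so that $\theta:=G_1(\ep_0)<1$; the $C^1$ bound \eref{estimate:main} then follows from $\|\ln(\sigma^n/\sigma^*)\|_C\le K\|\na\ln(\sigma^n/\sigma^*)\|_C$. So the $\sqrt2/2$ in the statement is the limiting per-step rate coming from that one pointwise step --- not from any two-iteration projection argument.

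Your route instead rests entirely on an $L^2$ transversality between ``fields parallel to $\J^*$'' and ``gradients of $H^1_0(\widetilde\Om)$ functions,'' but you stop short of proving it: you correctly observe that the raw residual bound gives only $1+O(\ep_0)$, then gesture at composing two steps to reach $\tfrac12+O(\ep_0)$ without supplying the quantitative estimate. Even if such an argument can be completed, it would require a uniform $C^0$ (not $L^2$) operator-norm bound on the composition of the pointwise projection onto $\J^*$ and the elliptic projection onto $\na H^1_0$, carried through Schauder estimates and the geometry of the level sets of $u^*$ --- a substantially different and harder mechanism than the paper's. As written, your proposal leaves its central contraction step open.
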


\begin{proof}
By Lemma 3.3, we know $\sigma^n \in C^1(\overline\Om)$ for $n=1,2,\cdots$. Since
$\frac{1}{\mu_0}\Delta B_z[\sigma^*]\equiv {\J}^\bot\cdot\na\ln\sigma^*$,
the iteration scheme \eref{eq-9} is
\begin{equation*}
  \na \ln \sigma^{n+1} =
  \left[
    \begin{array}{cc}
      \sigma^n \Delta u^n &  -({\J^*})^\bot\cdot\na\ln\sigma^* \\
      ({\J^*})^\bot\cdot\na\ln\sigma^*  & \sigma^n \Delta u^n \\
    \end{array}
  \right]
  \f{\J^*}{|\J^*|^2}.
\end{equation*}
Hence
\begin{equation}\label{liu54}
 \na\ln\f{\sigma^{n+1}}{\sigma^*} = \f{1}{\sigma^*|\na u^*|}(\sigma^n \Delta u^n -\sigma^* \Delta u^*) \f{\J^*}{|\J^*|} \qquad \mbox{in } \widetilde\Om.
\end{equation}
By decomposing $\na\ln\f{\sigma^{n+1}}{\sigma^*}$ in $\widetilde \Om$ along the directions $\J^*$ and $(\J^*)^\bot$, we obtain
\begin{equation*}\label{inequ:pre_1}
  \left\|\na\ln\f{\sigma^{n+1}}{\sigma^*}\right\|_{C(\widetilde \Om)} = \left\|\f{1}{\sqrt{2}|\J^*|}(\J^*+(\J^*)^\bot)\cdot\na\ln\f{\sigma^{n+1}}{\sigma^*}  \right\|_{C(\widetilde \Om)}.
\end{equation*}
Since $\sigma^n \Delta u^n = -\na u^n \cdot \na \sigma^n$, we obtain from \eref{liu54} that
\begin{equation}\label{inequ:pre_2}
  \left\|\na\ln\f{\sigma^{n+1}}{\sigma^*}\right\|_{C(\widetilde \Om)} =\left\|\f{1}{\sqrt{2}\sigma^*|\na u^*|}\left(\na u^n \cdot \na \sigma^n - \na u^* \cdot \na \sigma^*\right) \right\|_{C(\widetilde \Om)}.
\end{equation}

Applying the decomposition
\begin{equation*}\label{eq:pre-1}
  \na u^n \cdot \na \sigma^n - \na u^* \cdot \na \sigma^* = \na \sigma^*\cdot \na(u^n-u^*)+ \na u^n\cdot \na(\sigma^n-\sigma^*),
\end{equation*}
\eref{inequ:pre_2} generates
$$
  \left\|\na\ln\f{\sigma^{n+1}}{\sigma^*}\right\|_{C(\widetilde \Om)} \leq \f{1}{\sqrt{2}\xi_0}\|\na\ln \sigma^*\cdot \na (u^n-u^*)\|_{C(\widetilde \Om)} + \f{1}{\sqrt{2}}\left\|\f{\na u^n}{\sigma^* |\na u^*|}\cdot \na (\sigma^n -\sigma^*)\right\|_{C(\widetilde \Om)}.
$$
We further decompose the second term in the right hand side to generate
\begin{equation*}\label{inequ-20}
\left\|\na\ln\f{\sigma^{n+1}}{\sigma^*}\right\|_{C(\widetilde \Om)} \leq \mathcal{I}_1^n + \mathcal{I}_2^n + \mathcal{I}_3^n,  \qquad n=0,1,2,\cdots,
\end{equation*}
where
\begin{eqnarray}\label{def:I1-I3}
   \mathcal{I}_1^n &:=& \f{1}{\sqrt{2}\xi_0}\|\na\ln \sigma^*\|_{C(\widetilde \Om)}\| \na (u^n-u^*)\|_{C(\widetilde \Om)},  \label{I_1n}\\
  \mathcal{I}_2^n &:=& \f{1}{\sqrt{2}\sigma_-^*}\left\|\na (\sigma^n -\sigma^*)\right\|_{C(\widetilde \Om)},  \label{I_2n} \\
  \mathcal{I}_3^n &:=& \f{1}{\sqrt{2}\sigma_-^*\xi_0}\|\na(u^n - u^*)\|_{C(\widetilde \Om)} \|\na(\sigma^n-\sigma^*)\|_{C(\widetilde \Om)}, \label{I_3n}
\end{eqnarray}
with $\sigma_-^*:= \inf_\Om \sigma^*\ge \sigma_-^0>0$. We also define $\sigma_+^* = \sup_\Om \sigma^*\le \sigma_+^0$ for later use.

Define $e^n({\bf r}):=\sigma^n({\bf r}) -\sigma^*({\bf r})$ in $\Om$. Since $e_n|_{\Om\setminus \widetilde \Om}\equiv 0$ from $\sigma^n({\bf r})=\sigma^*({\bf r})=\sigma_b({\bf r})$, we obtain for ${\bf r}\in\widetilde \Om\subset \mathbb{R}^2$ in
$$
e_n({\bf r})\equiv \int_{\widetilde \Om} \Delta\Psi({\bf r}'-{\bf r})e_n({\bf r}')\;d{\bf r}'=-
\int_{\widetilde \Om} \na\Psi({\bf r}'-{\bf r})\cdot\na e_n({\bf r}')\;d{\bf r}'
$$
that
\begin{equation*}\label{estimate:e_n}
|e_n({\bf r})|\le \|\na e_n\|_{C({\widetilde \Om})}\int_{\widetilde \Om} |\na\Psi({\bf r}'-{\bf r})|\;d{\bf r}'\le K\|\na e_n\|_{C({\widetilde \Om})}.
\end{equation*}
Hence, we obtain
\begin{eqnarray}\label{relation:sigma_logsigma}
  \|e_n\|_{C(\widetilde \Om)} \leq K \|\na e_n\|_{C(\widetilde \Om)}
\end{eqnarray}
and
\begin{equation*}
  \sigma^n \leq \sigma^* + K\|\na e^n\|_{C(\Om)}.
\end{equation*}

Similarly to the above estimates, we have
\begin{equation}\label{sigma*=int}
  \sigma^*(\r) = \int_\Om \Delta\Psi(\r'-\r)\sigma^*(\r')d\r' = \sigma_b - \int_\Om \na\Psi(\r'-\r)\cdot \na\sigma^*(\r')d\r',\quad \r\in \Om.
\end{equation}
Hence, we have
\begin{equation*}\label{sigma*+control_1}
  \sigma^*_+ \leq \sigma_b + K\|\na \sigma^*\|_{C(\Om)} \leq \sigma_b + K\sigma_+^*\|\na\ln\sigma^*\|_{C(\Om)}.
\end{equation*}
From the definition of  $\mathcal{A}[\ep_0,\sigma_\pm^0,\sigma_b]$ and the fact that $\sigma^*\in \mathcal{A}[\ep_0,\sigma_\pm^0,\sigma_b]$, we obtain
\begin{equation}\label{sigma*+control_2}
  \sigma_+^* \leq \frac{\sigma_b}{1-K\|\na \ln\sigma^*\|_{C(\Om)}}.
\end{equation}
From \eref{sigma*=int} we also obtain
\begin{equation}\label{sigma*-control_1}
  \sigma_-^* \geq \sigma_b - K\|\na\sigma^*\|_{C(\Om)} \geq \sigma_b - K\sigma_+^*\|\na\ln\sigma^*\|_{C(\Om)}.
\end{equation}
Substituting \eref{sigma*+control_2} into \eref{sigma*-control_1} we obtain
\begin{equation}\label{sigma*-control_2}
  \sigma_-^* \geq \sigma_b -K\f{\sigma_b \|\na\ln\sigma^*\|_{C(\Om)}}{1-K\|\na\ln \sigma^*\|_{C(\Om)}} \geq \left(1-2K\|\na\ln\sigma^*\|_{C(\Om)}\right)\sigma_b.
\end{equation}
From \eref{sigma*+control_2} and \eref{sigma*-control_2} we obtain that
\begin{equation}\label{fraction_sigma_pm}
  1\leq \f{\sigma^*_+}{\sigma^*_-} \leq \f{1}{[1-K\|\na\ln\sigma^*\|_{C(\Om)}][1-2K\|\na\ln\sigma^*\|_{C(\Om)}]}.
\end{equation}

On the other hand, from the identity
\begin{equation}\label{relation:nasigma_n-sigma}
   \na e_n=\sigma^n\na\ln\sigma^n - \sigma^*\na\ln\sigma^*\\
   =\sigma^* \na\ln\f{\sigma^n}{\sigma^*}+e_n\na\ln\sigma^n,
\end{equation}
we obtain
\begin{equation*}\label{ineq-22}
\|\na e_n\|_{C(\widetilde\Om)}
\leq \|\sigma^*\|_{C(\widetilde\Om)} \left\|\na\ln\f{\sigma^n}{\sigma^*}\right\|_{C(\widetilde\Om)} + \|\na\ln\sigma^n\|_{C(\widetilde\Om)}\|e_n\|_{C(\widetilde\Om)},
\end{equation*}
which leads by \eref{relation:sigma_logsigma} to
$$
\|\na e_n\|_{C(\widetilde\Om)}\leq \|\sigma^*\|_{C(\widetilde\Om)} \left\|\na\ln\f{\sigma^n}{\sigma^*}\right\|_{C(\widetilde\Om)} + K \|\na \ln\sigma^n\|_{C(\widetilde \Om)}\|\na e_n\|_{C(\widetilde \Om)},
$$
that is,
\begin{equation}\label{ineq-50}
  (1-K\|\na\ln\sigma^n\|_{C(\widetilde \Om)})\|\na e_n\|_{C(\widetilde \Om)} \leq \|\sigma^*\|_{C(\widetilde \Om)} \left\|\na\ln\f{\sigma^n}{\sigma^*}\right\|_{C(\widetilde \Om)}.
\end{equation}

Next we will estimate $\mathcal{I}_i^n$ ($i=1,2,3$) using the method of mathematical induction.
Defining $\sigma^n_- = \inf_{\widetilde\Om}\sigma^n$ and $\sigma^n_+ = \sup_{\widetilde\Om}\sigma^n$, from Lemma \ref{lemma:estimate_u-uhat} we have
\begin{equation*}\label{estimate:un-u*}
  \|\na(u^n -u^*)\|_{C(\widetilde \Om)} \leq C_6^n\left\|\na\ln\f{\sigma^n}{\sigma^*}\right\|_{C(\widetilde \Om)} \quad \mbox{for } n=1,2,\cdots,
\end{equation*}
where $C_6^n~~(n=1,2,\cdots)$ is defined as
\begin{equation*}\label{def:C6n}
\begin{split}
  C_6^n = & C_4(\|\na\ln \sigma^n\|_{C(\Om)})(C_sC_3(\|\na\ln\sigma^n\|_{C(\Om)})
  C_2\f{\sigma_+^n}{\sigma_-^n}|\Om|^{1/2}+1)\times \\
  &\qquad C_4(\|\na\ln\sigma^*\|_{C(\Om)})C_sC_3(\|\na\ln\sigma^*\|_{C(\Om)})C_2\f{I}{\sigma_-^0|\mE^+|^{1/2}}.
\end{split}
\end{equation*}

Step 1. For $n=0$, we have $\|\na \ln \sigma^0\| =0$ and $\sup_\Om\sigma^0= \inf_\Om\sigma^0 = \sigma^0$. Hence from Lemma \ref{lemma:estimate_u-uhat} and \eqref{estimate:nau-nau_b} we obtain
\begin{equation*}\label{estimate:I_10}
  \|\mathcal{I}_1^0\|_{C(\widetilde \Om)} \leq \f{ C_b}{\sqrt{2}\xi_0}\|\na\ln\sigma^*\|_{C(\Om)}\left\|\na\ln\f{\sigma^{0}}{\sigma^*}\right\|_{C(\widetilde \Om)}.
\end{equation*}

Next we will estimate $\mathcal{I}_2^0$. From \eref{ineq-50} we obtain
\begin{equation*}
  \|\na e_0\|\leq \|\sigma^*\|_{C(\widetilde \Om)}\left\|\na\ln\f{\sigma^0}{\sigma^*}\right\|_{C(\widetilde \Om)}.
\end{equation*}
Hence from the definition of $\mathcal{I}_2^0$ and \eref{relation:nasigma_n-sigma} we obtain
\begin{equation}\label{estimate_I2_01}
  \|\mathcal{I}_2^0\|_{C(\widetilde \Om)} \leq \f{\sigma_+^*}{\sqrt{2}\sigma_-^*}\left\|\na\ln\f{\sigma^0}{\sigma^*}\right\|_{C(\widetilde \Om)}.
\end{equation}
Substituting \eref{fraction_sigma_pm} into \eref{estimate_I2_01} we obtain
\begin{equation*}
  \|\mathcal{I}_2^0\|_{C(\widetilde \Om)} \leq \f{1}{\sqrt{2}} \f{1}{[1-K\|\na\ln\sigma^*\|_{C(\Om)}][1-2K\|\na\ln\sigma^*\|_{C(\Om)}]}\left\|\na\ln\f{\sigma^0}{\sigma^*}\right\|_{C(\widetilde \Om)}.
\end{equation*}

Next we will estimate $\mathcal{I}_3^0$. From Lemma \ref{lemma:estimate_u-uhat} and \eref{fraction_sigma_pm} we obtain
\begin{equation*}
\begin{split}
  \|\mathcal{I}_3^0\|_{C(\widetilde \Om)} &\leq C_b\f{\sigma^*_+}{\sqrt{2}\sigma^*_-\xi_0} \left\|\na\ln\f{\sigma^0}{\sigma^*}\right\|_{C(\widetilde\Om)}^2 \\
  & = C_b\f{\sigma^*_+}{\sqrt{2}\sigma^*_-\xi_0}
  (\left\|\na\ln\sigma^0\right\|_{C(\widetilde\Om)}+\left\|\na\ln \sigma^*\right\|_{C(\widetilde\Om)}) \left\|\na\ln\f{\sigma^0}{\sigma^*}\right\|_{C(\widetilde\Om)} \\
  & \leq \f{C_b }{\sqrt{2}\xi_0}\f{\left\|\na\ln \sigma^*\right\|_{C(\widetilde\Om)}}
  {[1-K\|\na\ln\sigma^*\|_{C(\Om)}][1-2K\|\na\ln\sigma^*\|_{C(\Om)}]} \left\|\na\ln\f{\sigma^0}{\sigma^*}\right\|_{C(\widetilde\Om)}
\end{split}
\end{equation*}
in which the last equality comes from the estimate \eref{fraction_sigma_pm} and the fact that $\sigma^0$ is a constant.

Let $G_0$ be a constant depending on $\|\na\ln\sigma^*\|_{C(\Om)}$ defined as
\begin{equation*}
\begin{split}
& G_0(\|\na\ln\sigma^*\|_{C(\Om)}) := \f{1}{\sqrt{2}} \f{1}{[1-K\|\na\ln\sigma^*\|_{C(\Om)}][1-2K\|\na\ln\sigma^*\|_{C(\Om)}]} + \\
&\qquad  \f{ C_b}{\sqrt{2}\xi_0}\left(1+\f{1}{[1-K\|\na\ln\sigma^*\|_{C(\Om)}][1-2K\|\na\ln\sigma^*\|_{C(\Om)}]}\right)\|\na\ln\sigma^*\|_{C(\Om)}.
\end{split}
\end{equation*}
Since $G_0$ is strictly monotonically increasing with respect to $\|\na\ln \sigma^*\|_{C(\Om)}$ and  $G_0\to \f{\sqrt{2}}{2}$ as $\|\na\ln \sigma^*\|_{C(\Om)}\to 0$,
there exists $\eta_1 \in (0,\eta_0)$ such that $\theta_0:=G(\ep_0)<1$ for $\epsilon_0\in (0,\eta_1)$. Hence, from the monotonicity property of $G_0$,  we have for $\ep_0<\eta_1$ that
\begin{equation}\label{estimate:step1}
  \left\|\na\ln\f{\sigma^1}{\sigma^*}\right\|_{C(\Om)} \leq \theta_0 \left\|\na\ln \f{\sigma^0}{\sigma^*}\right\|_{C(\Om)} = \theta_0 \|\na\ln \sigma^*\|_{C(\Om)}\le \theta_0\epsilon_0< \epsilon_0.
\end{equation}

Step 2. For $n=1$,  we have from \eref{estimate:step1} that
\begin{equation}\label{estimate:nablalnsigma1}
  \|\na\ln\sigma^1\|_{C(\Om)} \leq (\theta_0 +1)\|\na\ln \sigma^*\|_{C(\Om)} <2\|\na\ln \sigma^*\|_{C(\Om)}.
\end{equation}
Then similarly to \eref{fraction_sigma_pm}, we have
\begin{equation*}\label{estimate:fraction_sigma1+/-}
  1\leq \f{\sigma^1_+}{\sigma^1_-} \leq \f{1}{[1-2K\|\na\ln\sigma^*\|_{C(\Om)}][1-4K\|\na\ln\sigma^*\|_{C(\Om)}]}.
\end{equation*}

We will estimate $\mathcal{I}_1^1$ first. From Lemma \ref{lemma:estimate_u-uhat},  we have for any $\alpha\in (0,1)$ that
\begin{equation}\label{estimate:nau1-u*}
  \|\na (u^1 - u^*)\|_{C(\Om)} \leq \|\na (u^1 - u^*)\|_{C^{0,\alpha}(\Om)}\leq C_7\left\|\na\ln \f{\sigma^1}{\sigma^*}\right\|_{C(\Om)}.
\end{equation}
Here, the constant $C_7$ is defined as
\begin{equation*}
\begin{split}
  C_7 = & \overline{\overline{C_4}}\left(C_s\overline{\overline{C_3}}
  C_2\f{1}{[1-2K\|\na\ln\sigma^*\|_{C(\Om)}][1-4K\|\na\ln\sigma^*\|_{C(\Om)}]}
  |\Om|^{1/2}+1\right)\times \\
  &\quad C_4(\|\na\ln\sigma^*\|_{C(\Om)})C_sC_3(\|\na\ln\sigma^*\|_{C(\Om)})
  C_2\f{I}{\sigma_-^0|\mE^+|^{1/2}},
\end{split}
\end{equation*}
where $\overline{\overline{C_i}} = \sup\limits_{s\in (0,\f{1}{2K})}F_i(s)$ for $i=3,4$.
Then from \eref{I_1n} and \eref{estimate:nau1-u*}, we obtain
\begin{equation}\label{estimate_I1_1}
  \|\mathcal{I}_1^1\|_{C(\widetilde \Om)}  \leq \f{1}{\sqrt{2}\xi_0}\|\na\ln\sigma^*\|_{C(\Om)} C_7 \left\|\na\ln \f{\sigma^1}{\sigma^*}\right\|_{C(\Om)}.
\end{equation}

%
%

For $\mathcal{I}_2^1$, from \eref{ineq-50} and \eref{estimate:nablalnsigma1} we obtain
\begin{equation}\label{estimate:nabla_e1}
   \|\na e_1\|_{C(\widetilde \Om)} \leq \f{\|\sigma^*\|_{C(\widetilde \Om)}}{1-2K\|\na\ln\sigma^*\|_{C(\widetilde \Om)}} \left\|\na\ln\f{\sigma^1}{\sigma^*}\right\|_{C(\widetilde \Om)}.
\end{equation}
Substituting \eref{fraction_sigma_pm}, \eref{ineq-50} and \eref{estimate:nabla_e1} into \eref{I_2n}, we obtain
\begin{equation}\label{estimate_I2_1}
  \|\mathcal{I}_2^1\|_{C(\widetilde \Om)}  \leq \f{1}{\sqrt{2}[1-K\|\na\ln\sigma^*\|_{C(\Om)}][1-2K\|\na\ln\sigma^*\|_{C(\Om)}]^2}\left\|\na\ln\f{\sigma^1}{\sigma^*}\right\|_{C(\widetilde \Om)}.
\end{equation}

Next we will estimate $\mathcal{I}_3^1$. Substituting \eref{estimate:nau1-u*} and \eref{estimate:nabla_e1} into \eref{I_3n} we obtain
\begin{equation*}
\begin{split}
  \|\mathcal{I}_3^1\|_{C(\widetilde \Om)}  &\leq \f{C_7 }{\sqrt{2}\xi_0[1-K\|\na\ln\sigma^*\|_{C(\Om)}][1-2K\|\na\ln\sigma^*\|_{C(\Om)}]^2} \left\|\na\ln\f{\sigma^1}{\sigma^*}\right\|^2_{C(\widetilde\Om)}\\
  & \leq \f{C_7 (\|\na\ln\sigma^1\|_{C(\Om)}+\|\na\ln\sigma^*\|_{C(\Om)})}{\sqrt{2}\xi_0[1-K\|\na\ln\sigma^*\|_{C(\Om)}][1-2K\|\na\ln\sigma^*\|_{C(\Om)}]^2} \left\|\na\ln\f{\sigma^1}{\sigma^*}\right\|_{C(\widetilde\Om)}.
\end{split}
\end{equation*}
From \eref{estimate:nablalnsigma1} we obtain
\begin{equation}\label{estimate_I3_1}
  \|\mathcal{I}_3^1\|_{C(\widetilde \Om)} \leq \f{3C_7 \|\na\ln\sigma^*\|_{C(\Om)}}{\sqrt{2}\xi_0[1-K\|\na\ln\sigma^*\|_{C(\Om)}][1-2K\|\na\ln\sigma^*\|_{C(\Om)}]^2} \left\|\na\ln\f{\sigma^1}{\sigma^*}\right\|_{C(\widetilde\Om)}.
\end{equation}

Let $G_1$ be a constant depending on $\|\na\ln\sigma^*\|_{C(\Om)}$ defined as
\begin{equation*}\label{def:theta}
\begin{split}
 G_1(\|\na\ln\sigma^*\|_{C(\Om)}) & = \f{C_7 }{\sqrt{2}\xi_0}\left(1+\f{3}
 {[1-K\|\na\ln\sigma^*\|_{C(\Om)}][1-2K\|\na\ln\sigma^*\|_{C(\Om)}]^2}\right)\times \\
 &\quad \|\na\ln\sigma^*\|_{C(\Om)}
+\f{1}{\sqrt{2}[1-K\|\na\ln\sigma^*\|_{C(\Om)}][1-2K\|\na\ln\sigma^*\|_{C(\Om)}]^2}.
\end{split}
\end{equation*}
Similarly to $G_0(\|\na\ln\sigma\|_{C(\Om)})$, $G_1(\|\na\ln\sigma\|_{C(\Om)})$ is also strictly monotonically increasing with respect to $\|\na\ln \sigma^*\|_{C(\Om)}$ and  $G_1\to \f{\sqrt{2}}{2}$ as $\|\na\ln \sigma^*\|_{C(\Om)}\to 0$.
So there exists $\eta\in (0,\eta_1)$, such that if $\ep_0<\eta$, $\theta:=G_1(\ep_0)\in (\f{\sqrt{2}}{2},1)$. From the monotonicity property of $G_1$, we obtain that
\begin{equation*}\label{estimate:lnsigma2}
  \left\|\na\ln\f{\sigma^2}{\sigma^*}\right\|_{C(\Om)} \leq \theta \left\|\na\ln \f{\sigma^1}{\sigma^*}\right\|_{C(\Om)} \leq \theta\theta_0 \|\na\ln \sigma^*\|_{C(\Om)}\le \epsilon_0\theta.
\end{equation*}

Step 3.
Suppose for $k \leq n$ and $\|\na\ln\sigma^*\|_{C(\Om)}\in [0,\f{1}{4K})$, we have
the estimate
\begin{equation}\label{assumption}
  \left\|\na\ln\f{\sigma^k}{\sigma^*}\right\|_{C(\Om)} \leq \theta^{k-1} \theta_0 \|\na\ln \sigma^*\|_{C(\Om)}\le \epsilon_0\theta^{k-1}
\end{equation}
for $\theta$ defined in Step 2.
We will next estimate for the case that $k+1$.

From \eref{assumption}, we have
\begin{equation}\label{estimate:nalnsigma_k}
  \|\na\ln\sigma^k\|_{C(\Om)} \leq (\theta^{k-1}\theta_0 +1)\|\na\ln \sigma^*\|_{C(\Om)} <2\|\na\ln \sigma^*\|_{C(\Om)}.
\end{equation}
Since $\|\na\ln\sigma^*\|_{C(\Om)}<\f{1}{4K}$, using the same way as that estimating \eref{fraction_sigma_pm}, we obtain
\begin{equation*}
  1\leq \f{\sigma^k_+}{\sigma^k_-} \leq \f{1}{[1-2K\|\na\ln\sigma^*\|_{C(\Om)}][1-4K\|\na\ln\sigma^*\|_{C(\Om)}]}\quad \mbox{for }k=2,3,\cdots,n.
\end{equation*}

Noting that the right hand side of \eref{estimate:nalnsigma_k} is independent of $k$, using exactly the same method as that used to derive \eref{estimate_I1_1}-\eref{estimate_I3_1} we have the following estimates
\begin{eqnarray}
  &\|\mathcal{I}_1^k\|_{C(\widetilde \Om)}\leq  \f{1}{\sqrt{2}\xi_0}\|\na\ln\sigma^*\|_{C(\Om)} C_7 \left\|\na\ln \f{\sigma^k}{\sigma^*}\right\|_{C(\Om)} \label{I_1k},\\
  &\|\mathcal{I}_2^k\|_{C(\widetilde \Om)}\leq  \f{1}{\sqrt{2}[1-K\|\na\ln\sigma^*\|_{C(\Om)}][1-2K\|\na\ln\sigma^*\|_{C(\Om)}]^2}\left\|\na\ln\f{\sigma^k}{\sigma^*}\right\|_{C(\widetilde \Om)}, \quad \label{I_2k}\\
  &\|\mathcal{I}_3^k\|_{C(\widetilde \Om)}\leq  \f{3 C_7 \|\na\ln\sigma^*\|_{C(\Om)} }{\sqrt{2}\xi_0[1-K\|\na\ln\sigma^*\|_{C(\Om)}][1-2K\|\na\ln\sigma^*\|_{C(\Om)}]^2} \left\|\na\ln\f{\sigma^k}{\sigma^*}\right\|_{C(\widetilde\Om)}. \quad\quad \label{I_3k}
\end{eqnarray}
Note that all the coefficients in the right hand sides of \eref{I_1k}-\eref{I_3k} are independent of $k$ and exactly the same as the estimates for $\mathcal{I}^1_i$ for $i=1,2,3$ respectively. Hence, we have exactly the same $\theta$ as that defined in Step 2 such that
\begin{equation}\label{estimate:nalnsigma^k+1_0}
   \left\|\na\ln\f{\sigma^{k+1}}{\sigma^*}\right\|_{C(\Om)} \leq \|\mathcal{I}_1^k\|_{C(\widetilde \Om)} + \|\mathcal{I}_2^k\|_{C(\widetilde \Om)} + \|\mathcal{I}_3^k\|_{C(\widetilde \Om)} \leq \theta \left\|\na\ln\f{\sigma^k}{\sigma^*}\right\|_{C(\widetilde\Om)}.
\end{equation}
From \eref{assumption} and \eref{estimate:nalnsigma^k+1_0} we obtain
\begin{equation*}
  \left\|\na \ln\f{\sigma^{k+1}}{\sigma^*}\right\|_{C(\widetilde \Om)} \leq \theta_0 \|\na \ln\sigma^*\|_{C(\Om)}\theta^{k}\le \epsilon_0\theta^{k}
\end{equation*}
from induction arguments. From Lemma \ref{lemma:C1_sigma_n} and $\sigma^n = \sigma^* = \sigma_b$ in $\Om\setminus \widetilde \Om$ we obtain
\begin{equation}\label{estimate_nabla_ln_C}
  \left\|\na \ln\f{\sigma^{k+1}}{\sigma^*}\right\|_{C(\Om)} \leq \epsilon_0\theta^{k}.
\end{equation}

Similarly to \eref{relation:sigma_logsigma} we also have that
\begin{equation}\label{estimate_ln_C}
 \left\|\ln\f{\sigma^{k+1}}{\sigma^*}\right\|_{C(\Om)} \leq K \left\|\na \ln\f{\sigma^{k+1}}{\sigma^*}\right\|_{C(\Om)}.
\end{equation}
Combining \eref{estimate_nabla_ln_C} and \eref{estimate_ln_C}, we obtain \eref{estimate:main}.
The proof is complete.
\end{proof}

\begin{remark}
Note that due to the compactness of the admissible set $\mathcal{A}[\ep_0,\sigma^0_\pm,\sigma_b]$, it is difficult to guarantee $\sigma^n\in \mathcal{A}[\ep_0,\sigma^0_\pm,\sigma_b]$. However, Theorem \ref{theorem:convergence} tells us that  $\sigma_n\to \sigma^*$ as $n\to \infty$ in the $C^1$ sense.
\end{remark}

\section{Numerical implementations}
To validate our proposed scheme with convergence analysis, we present some numerical experiments using three different models. To test the convergence property shown in Theorem \ref{theorem:convergence} for target conductivities of different smoothness characterized by $\|\na\ln\sigma^*\|_{C(\overline{\Om})}$, in each model we compare the convergence behavior of the iteration process between the cases of $\sigma_i$ and their blurred versions $\widehat \sigma_i$ for $i=1,2,3$. To be precise, for a given $\sigma$ in $\Omega$, we extend it periodically into $\R^2$ and then take the convolution using the kernel
\begin{equation}\label{def:kernel}
  \mathcal{K}(x,y) = \f{1}{c}e^{-\f{x^2+y^2}{2\nu^2}}, \qquad (x,y)\in \R^2
\end{equation}
with $c = \int_{\R^2}e^{-\f{x^2+y^2}{2\nu^2}} dx dy$ to yield the smooth function
\begin{equation}\label{convolution}
  \widehat\sigma(x,y) = \int_{\R^2}\mathcal{K}(x-x',y-y')\sigma(x',y')dx'dy'.
\end{equation}
With a chosen $\nu$ we can obtain a blurred $\widehat \sigma$ corresponding to each original $\sigma$.

For simplicity, we evaluate the performance of the convergence behavior in terms of the relative errors defined by
\begin{equation*}\label{RE}
  RE_i(n) = \f{\|\ln\sigma_i^n-\ln\sigma_i\|_{C(\Om)}}{\|\ln\sigma_i\|_{C(\Om)}}
\end{equation*}
and
\begin{equation*}\label{RE_hat}
  \widehat{RE}_i(n) = \f{\|\ln\widehat \sigma_i^n-\ln\widehat\sigma_i\|_{C(\Om)}}{\|\ln\widehat\sigma_i\|_{C(\Om)}}
\end{equation*}
for $i=1,2,3$. Here $\sigma_i^n$ and $\widehat \sigma_i^n$ are respectively the reconstructed conductivity using the single current harmonic $B_z$ algorithm at the $n$-th step corresponding to true conductivities $\sigma_i$ and $\widehat \sigma_i$ ($i=1,2,3$).

\subsection{A toy model}\label{sec:toy_simulation}
We considered a two-dimensional toy model in a square domain $\Om:=[-1,1]\times [-1,1] \subset \R^2$, similar to the method used in \cite{Liu2010}. We defined a pair of electrodes with length $0.3$ and thickness $0.1$ to the left and right boundaries of $\Om$, i.e.,
$\mE^{\pm} = \{(x,y):\;|x\pm 1.05|\leq 0.05~\mbox{and } |y|\leq 0.15\}$. We first set the target conductivity distribution in $\Om$ to be
\begin{equation*}
  \sigma_1(x,y) = \left\{
    \begin{split}
      & 1+\f{1}{2}\left(\cos\sqrt{x^2+y^2}-\f{\sqrt{3}}{2}\right)+1,\qquad 0\leq \sqrt{x^2+y^2}\leq \f{\pi}{8}, \\
      & 1,\qquad \mbox{otherwise}.
    \end{split}
  \right.
\end{equation*}

To obtain $\widehat \sigma_1$, we specified $\nu=5$. The matrix size was $128\times 128$. The convolution \eref{convolution} was computed by a weighted summation with the discretized version of $\mathcal{K}$ in a window with a size of $6\times 6$ pixels. We illustrate $\sigma_1$, $\widehat \sigma_1$ and the configuration of $\Om\cup \mE^\pm$ in Fig.~\ref{fig:smooth_setup}.

\begin{figure}[h]
\onecolumn
\centering
\subfigure[]{
\includegraphics[width=3.7cm,height=3.7cm]{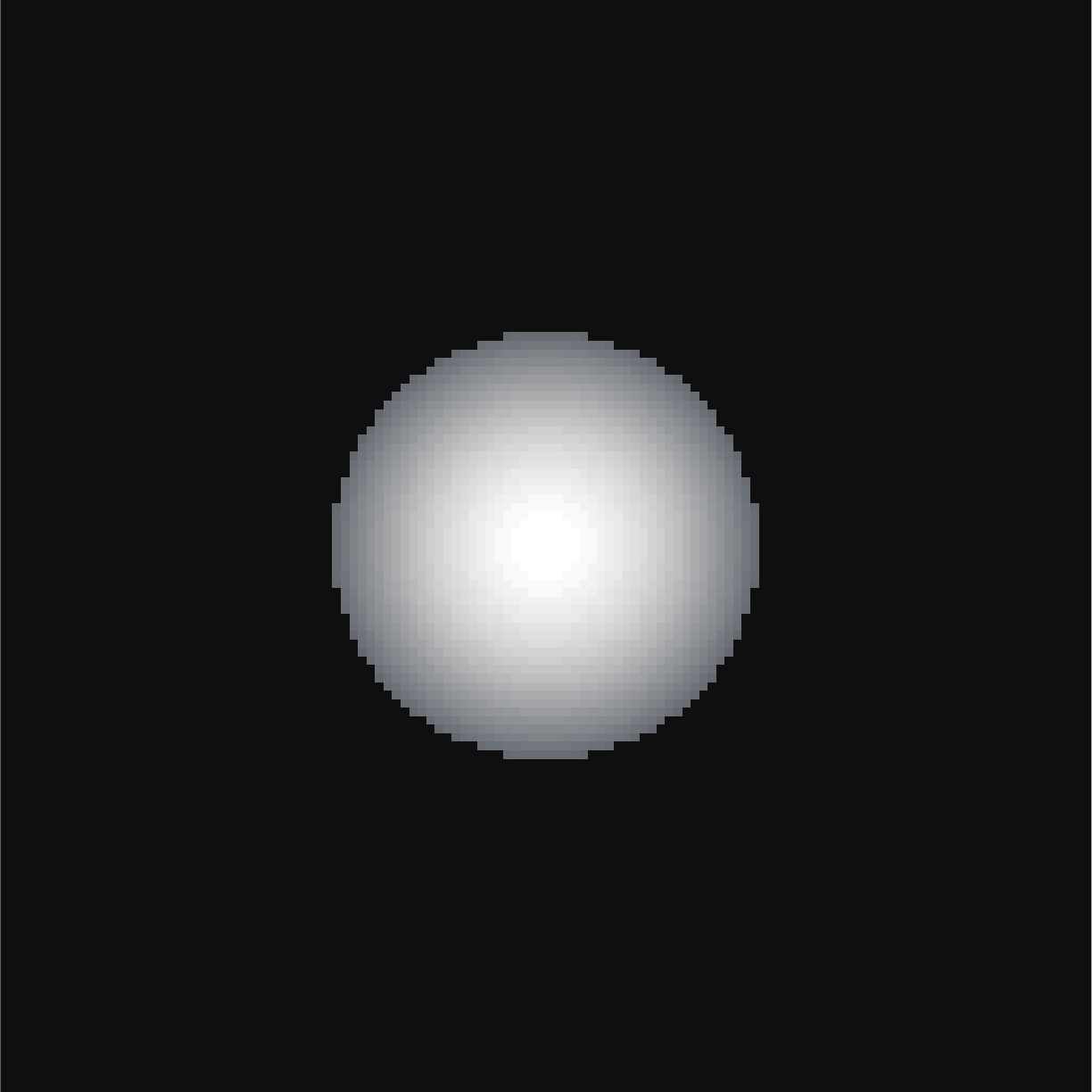}}
~\subfigure[]{
\includegraphics[width=4.3cm,height=3.7cm]{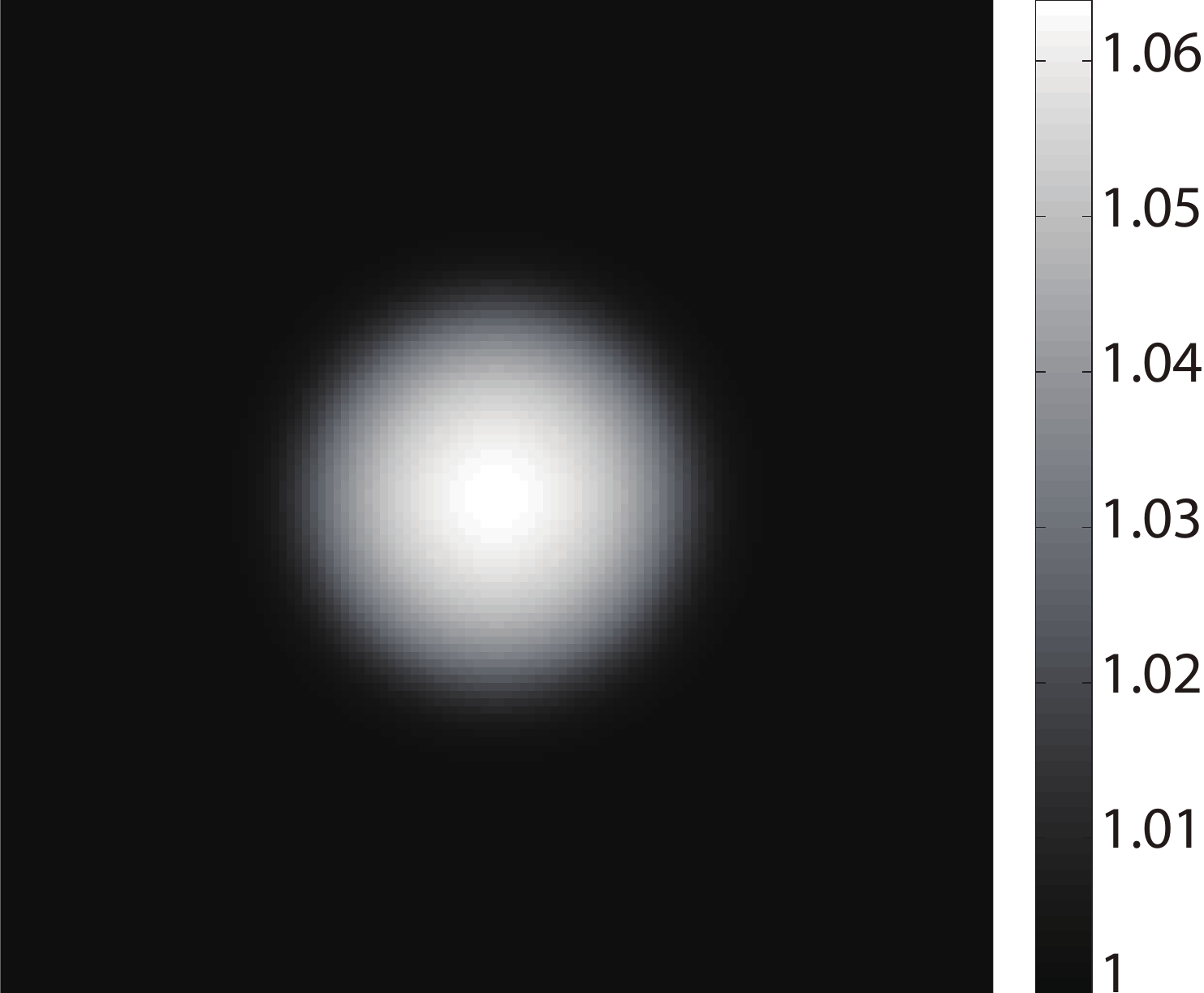}}
~\subfigure[]{
\includegraphics[width=4.0cm,height=3.7cm]{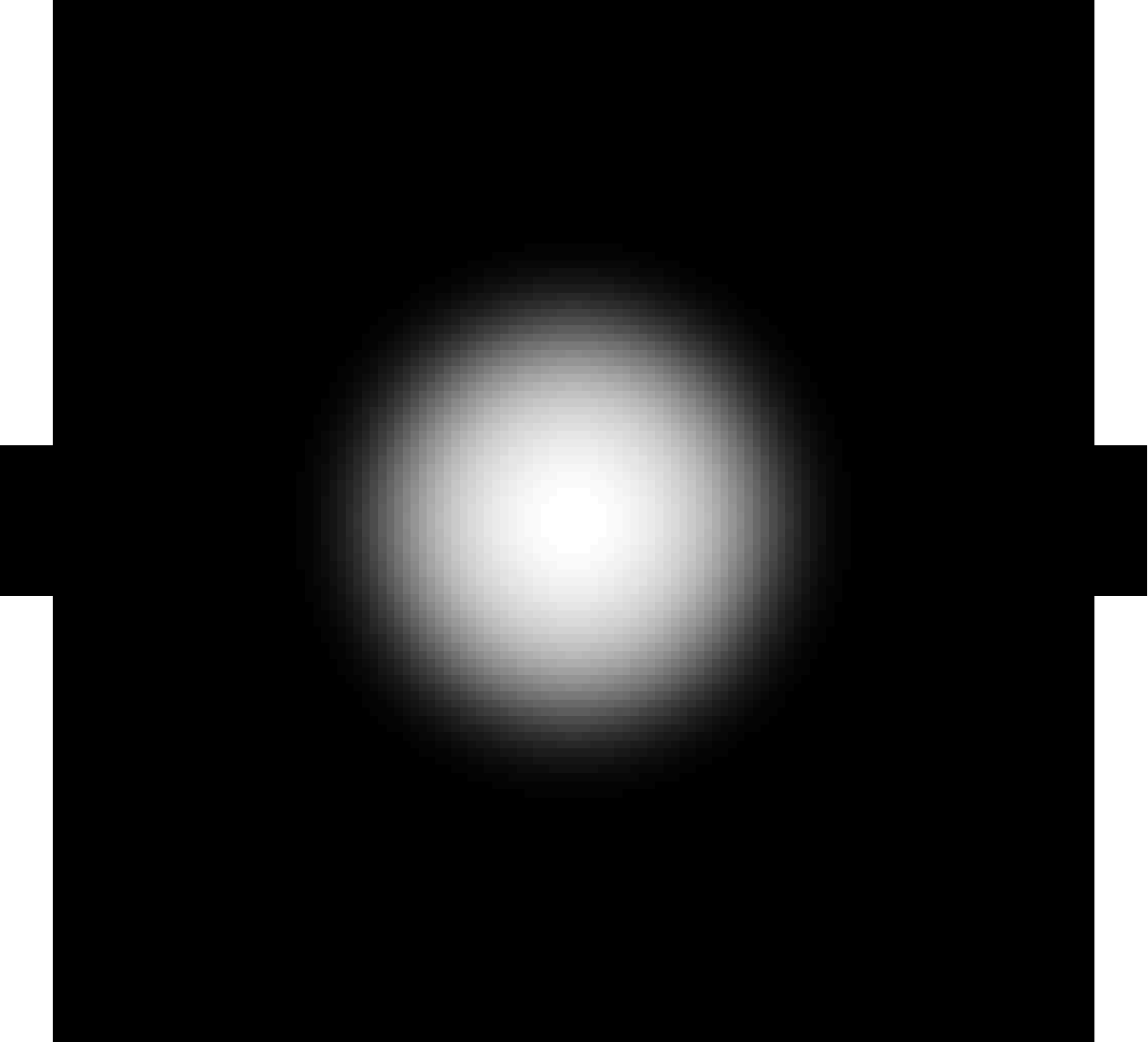}}
\caption{The geometry $\Om$ and conductivity distributions: (a) $\sigma_1(x,y)$, (b) $\widehat \sigma_1(x,y)$ and (c) $\Om\cup \mE^\pm$.}
\label{fig:smooth_setup}
\end{figure}

We simulated injection of a current with amplitude $10$ mA through the electrodes. The finite element method, was used to solve the equation \eref{eq:forwardPDE} for $\sigma_1$ and $\widehat \sigma_1$ to obtain the solutions $u_1$ and $\widehat u_1$, respectively. We then obtained the distributions $B_{z,1}$ and $\widehat{B}_{z,1}$ using the formula \eref{biot-savart law} and the FFT method \cite{Yazdanian2020}. Images of $B_{z,1}$ and $\widehat{B}_{z,1}$ are depicted in Figure \ref{fig:Circle_Bz} (a) and (b) respectively. It can be seen that the magnetic fields are not sensitive to the smoothness of the conductivity, illustrating the ill-posedness of this conductivity imaging problem.
\begin{figure}[h]
\onecolumn
\centering
\subfigure[]{
\includegraphics[width=3.7cm,height=3.7cm]{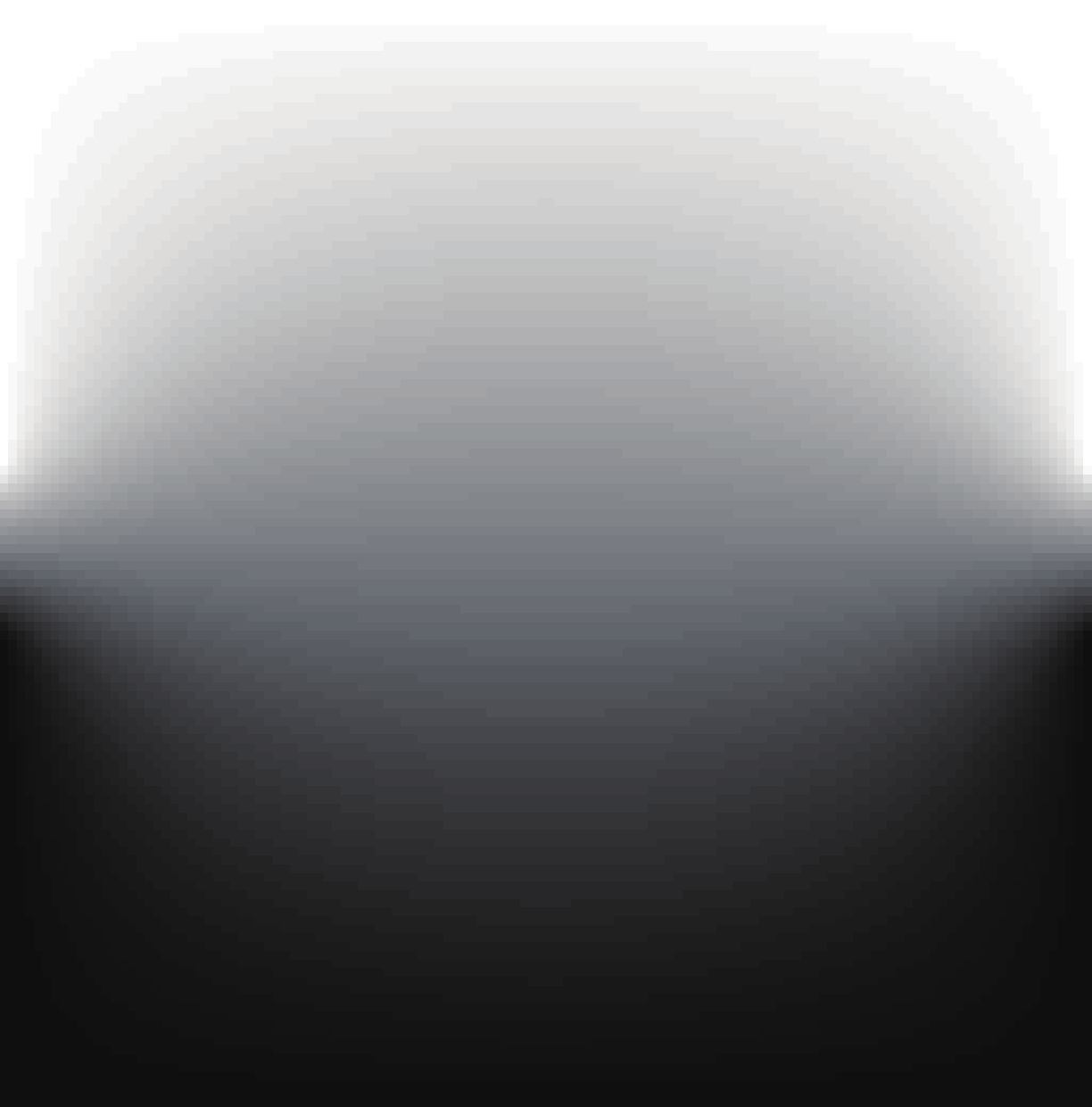}}
~\subfigure[]{
\includegraphics[width=4.5cm,height=3.7cm]{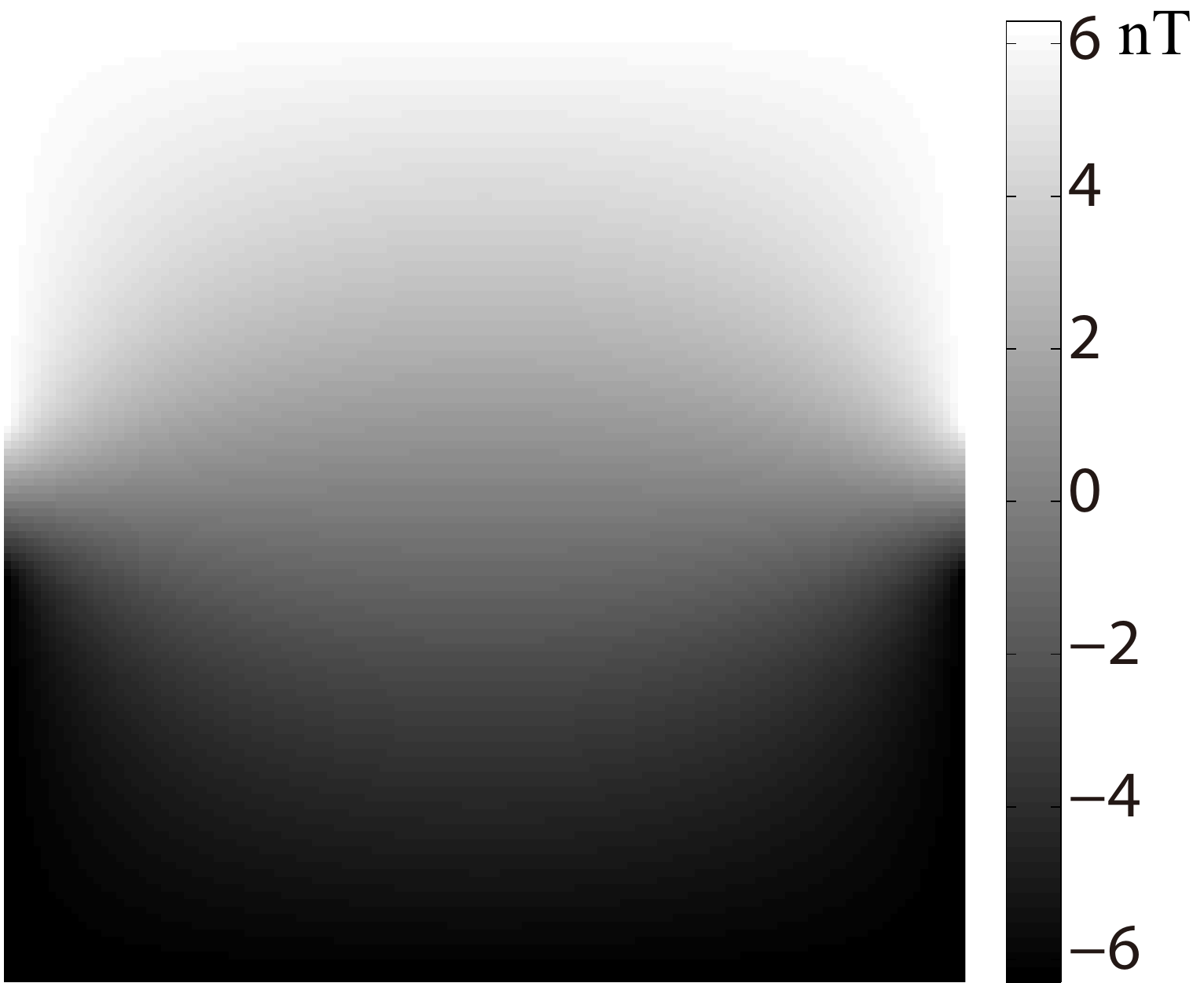}}
\caption{(a) illustrates $B_{z,1}$ while (b) is the image of $\widehat{B}_{z,1}$.}
\label{fig:Circle_Bz}
\end{figure}

Using these magnetic flux density data and the single current harmonic $B_z$ algorithm, we then obtained the reconstructed conductivity $\sigma_1^{n}$ and $\widehat \sigma_1^n$. Figure \ref{fig:circle_result} (a), (b) and (c) show $\sigma_1^n$ when $n=1,~20$ and $50$ respectively. Figure \ref{fig:circle_result} (d), (e) and (f) show  $\widehat\sigma_1^n$ results corresponding to $n=1,~20$ and $50$ respectively.

\begin{figure}[h]
\onecolumn
\centering
\subfigure[]{
\includegraphics[width=3.7cm,height=3.7cm]{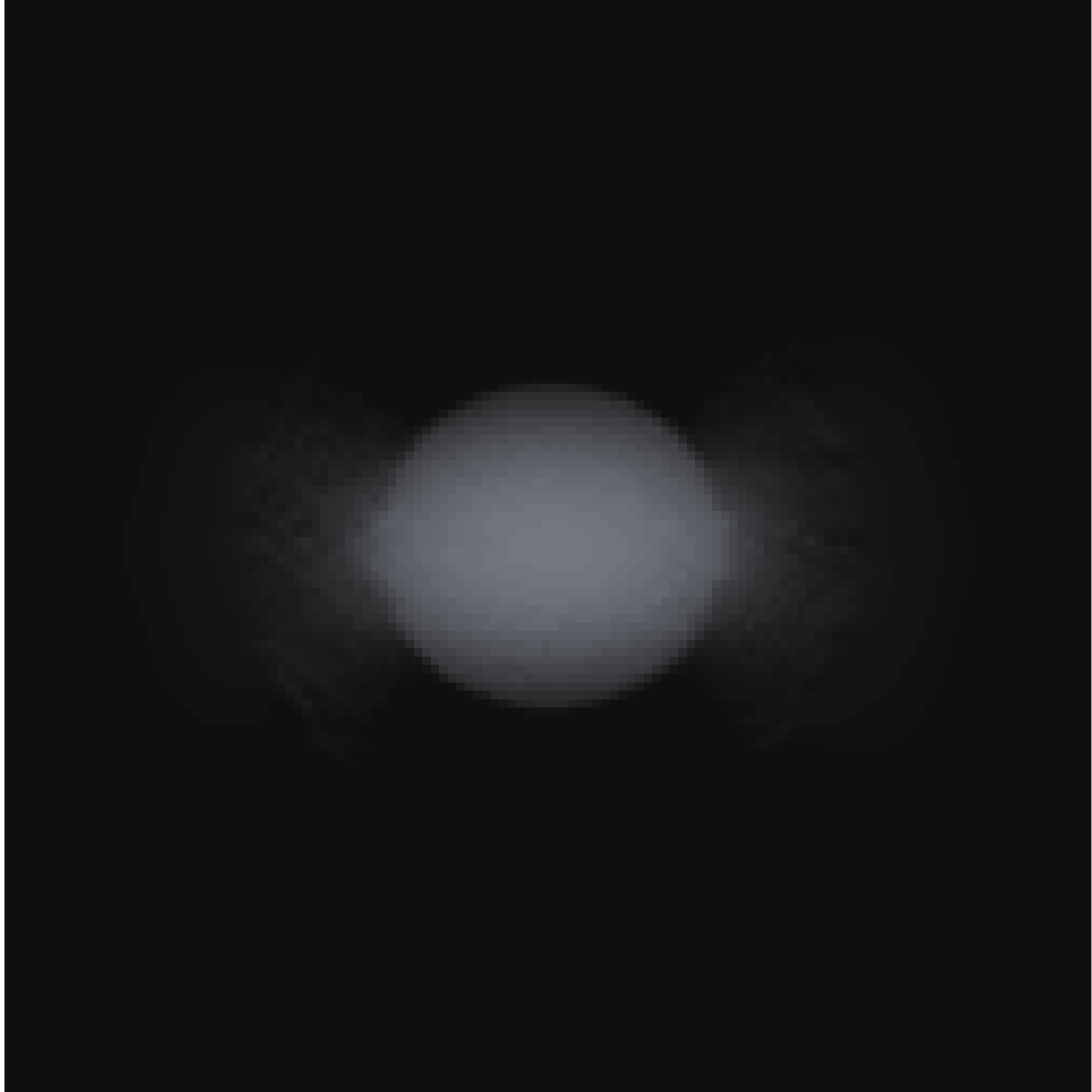}}
~\subfigure[]{
\includegraphics[width=3.7cm,height=3.7cm]{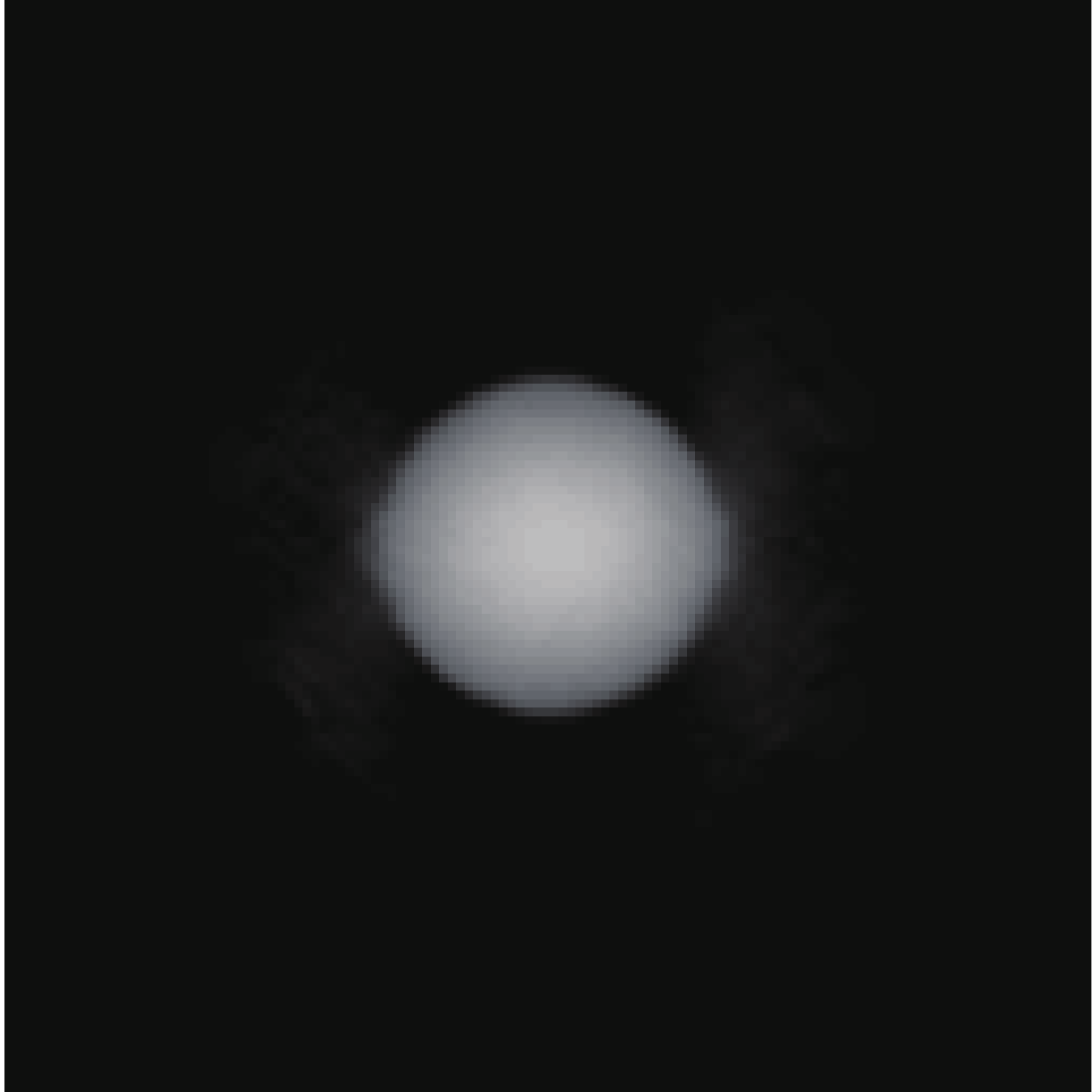}}
~\subfigure[]{
\includegraphics[width=4.3cm,height=3.7cm]{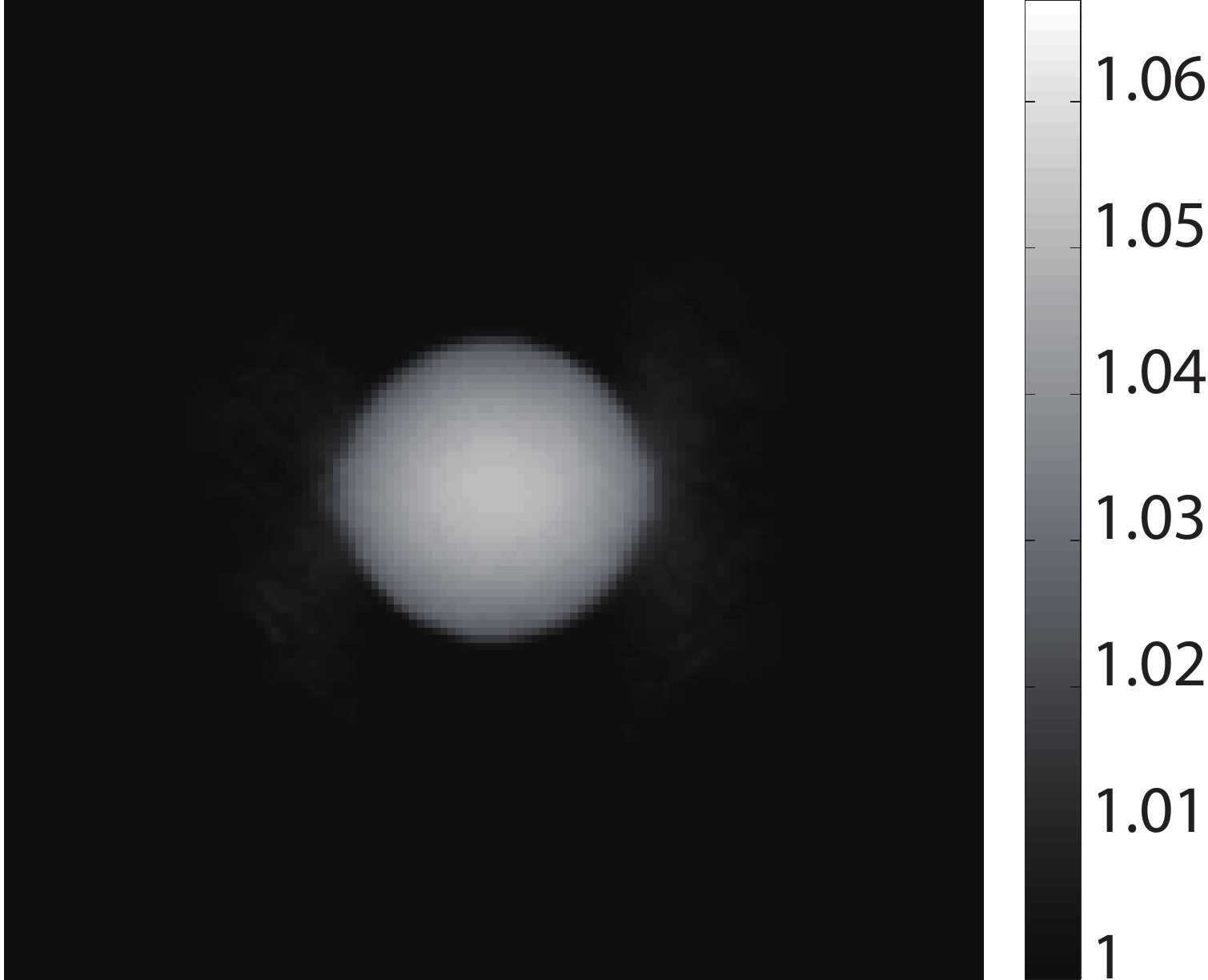}}
~\subfigure[]{
\includegraphics[width=3.7cm,height=3.7cm]{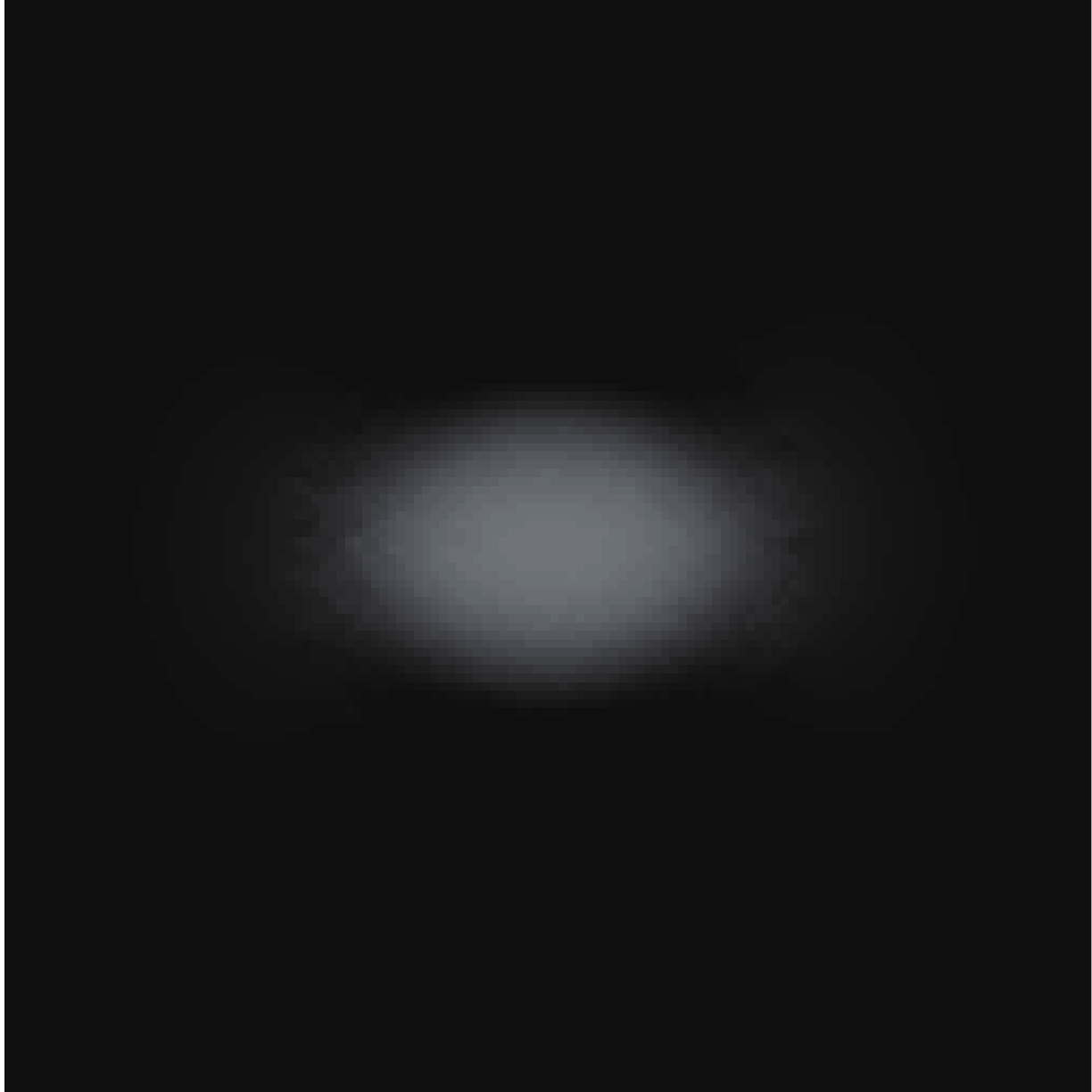}}
~\subfigure[]{
\includegraphics[width=3.7cm,height=3.7cm]{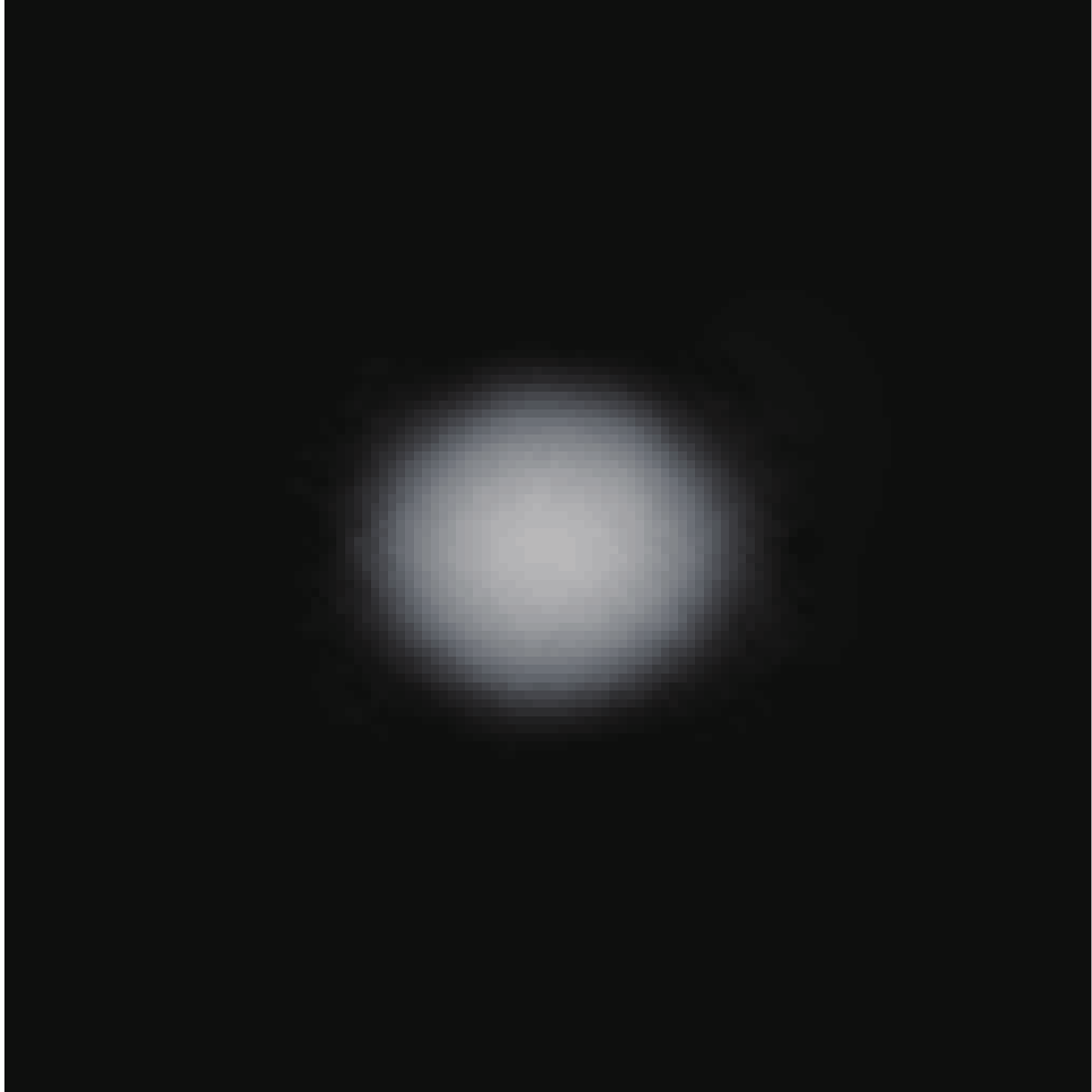}}
~\subfigure[]{
\includegraphics[width=4.3cm,height=3.7cm]{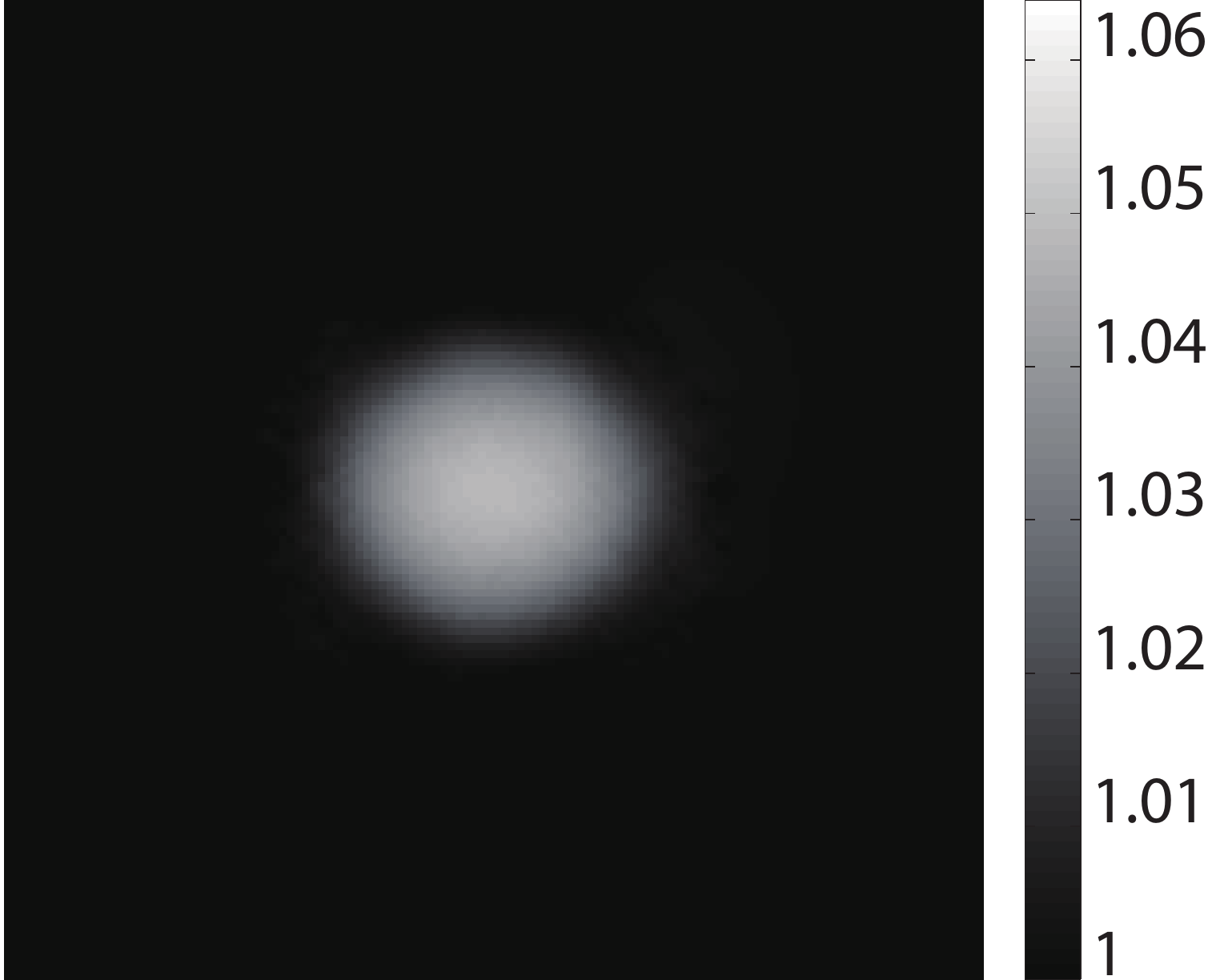}}
\caption{(a) $\sigma_1^1$, (b) $\sigma_1^{20}$, (c) $\sigma_1^{50}$, (d) $\widehat\sigma_1^1$, (e) $\widehat\sigma_1^{20}$ and (f) $\widehat\sigma_1^{50}$.}
\label{fig:circle_result}
\end{figure}

In Figure \ref{Fig:convergence_circle} we compare the asymptotic behaviors of $RE_1(n)$ and $\widehat{RE}_1(n)$ with respect to the iteration process for $n=1,\cdots, 50$. In Figure \ref{Fig:convergence_circle}, the dashed line with diamond markers shows the asymptotic behavior of $RE_1(n)$ while the solid line with circles depicts the behavior of $\widehat{RE}_1(n)$. As we can see, the relative errors for $\widehat \sigma_1^n$ are much smaller than that for $\sigma_1$. To quantitatively illustrate the behavior of $RE_1(n)$ and $\widehat{RE}_1(n)$ we also provide Table \ref{table:RE} for $n=5,10,15,\cdots,50$. We see here that for steps $20$ and $50$, relative errors are $RE_1(20) = RE_1(50)= 3.9\%$, while  $\widehat{RE}_1(20) = \widehat{RE}_1(50)= 2.43\%$.
\begin{figure}[h]
  \centering
  \includegraphics[width=10cm,height=5cm]{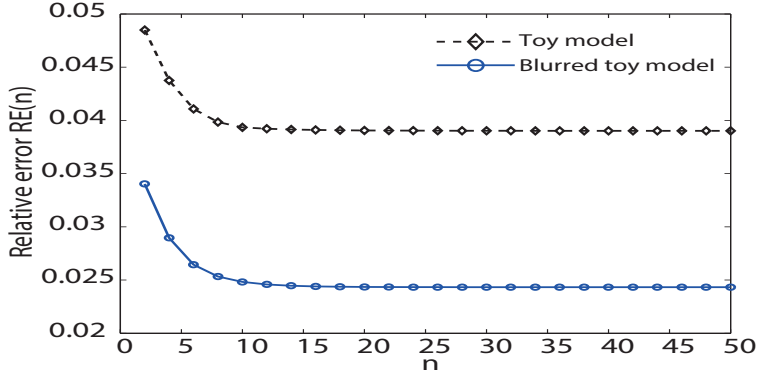}\\
  \caption{Asymptotic behaviors of $RE_1(n)$ and $\widehat{RE}_1(n)$ for $n\leq 50$. The dashed line with diamond markers shows $RE_1(n)$ while the solid line with circles shows $\widehat{RE}_1(n)$. }\label{Fig:convergence_circle}
\end{figure}

\subsection{A modified Shepp-Logan phantom}

Now we define $\Om$ to be the modified Shepp-Logan phantom \cite{SheppLogan1974}. To be precise, we construct $\p\Om$ by a disc centered at $(0,0)$ having a diameter of $0.45$m. We set the matrix size to be $128\times 128$ and the field of view to be $0.6 \mbox{m}\times 0.6 \mbox{m}$. We attached a pair of electrodes on $\p \Om$ with length 0.0972 m. We applied the convolution \eref{def:kernel} with $\nu = 1.2$ and window size to be $6\times 6$ pixels to the classical conductivity distribution $\sigma_2$ shown in Figure \ref{fig:shepp-logan_setup} (a), to obtain a blurred conductivity $\widehat \sigma_2\in C^1(\Omega)$, illustrated in Figure \ref{fig:shepp-logan_setup} (b).
\begin{figure}[h]
\onecolumn
\centering
\subfigure[]{
\includegraphics[width=3.7cm,height=3.7cm]{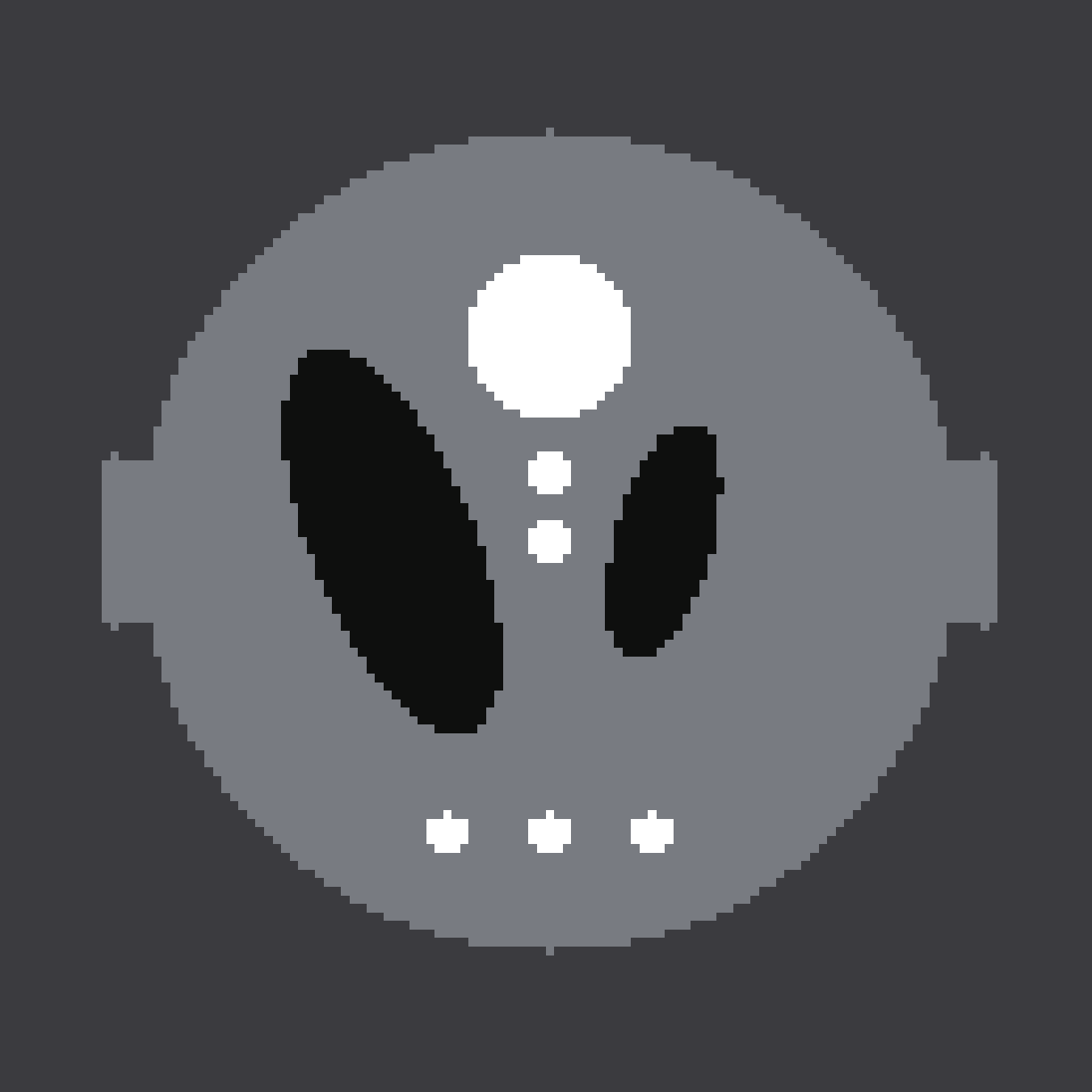}}
~\subfigure[]{
\includegraphics[width=4.5cm,height=3.7cm]{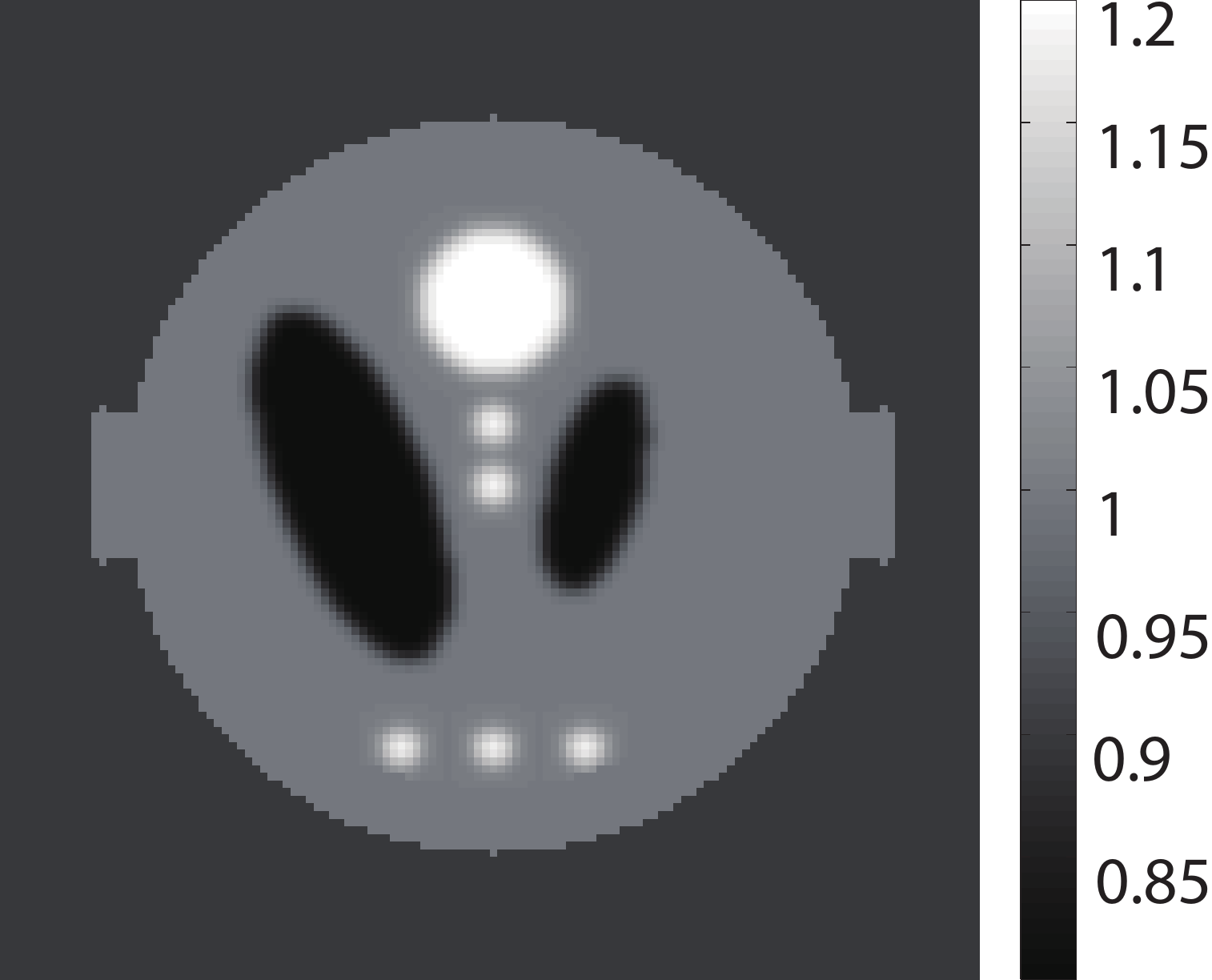}}
\caption{Simulation setup and conductivity distributions used in the modified Shepp-Logan phantom. (a) $\sigma_2$, (b) $\widehat \sigma_2$.}
\label{fig:shepp-logan_setup}
\end{figure}

As in Section \ref{sec:toy_simulation}, a current with amplitude 10 mA was injected, and the resulting $B_{z,2}$ and $\widehat B_{z,2}$ images are shown in Figures \ref{fig:Shepp_Bz} (a) and (b) respectively. The reconstructed conductivities $\sigma_2^n$ and $\widehat \sigma_2^n$ for $n=1,20$ and $50$ found using our iteration scheme are shown in Figure \ref{fig:Shepp-Logan_result}.

\begin{figure}[h]
\onecolumn
\centering
\subfigure[]{
\includegraphics[width=3.7cm,height=3.7cm]{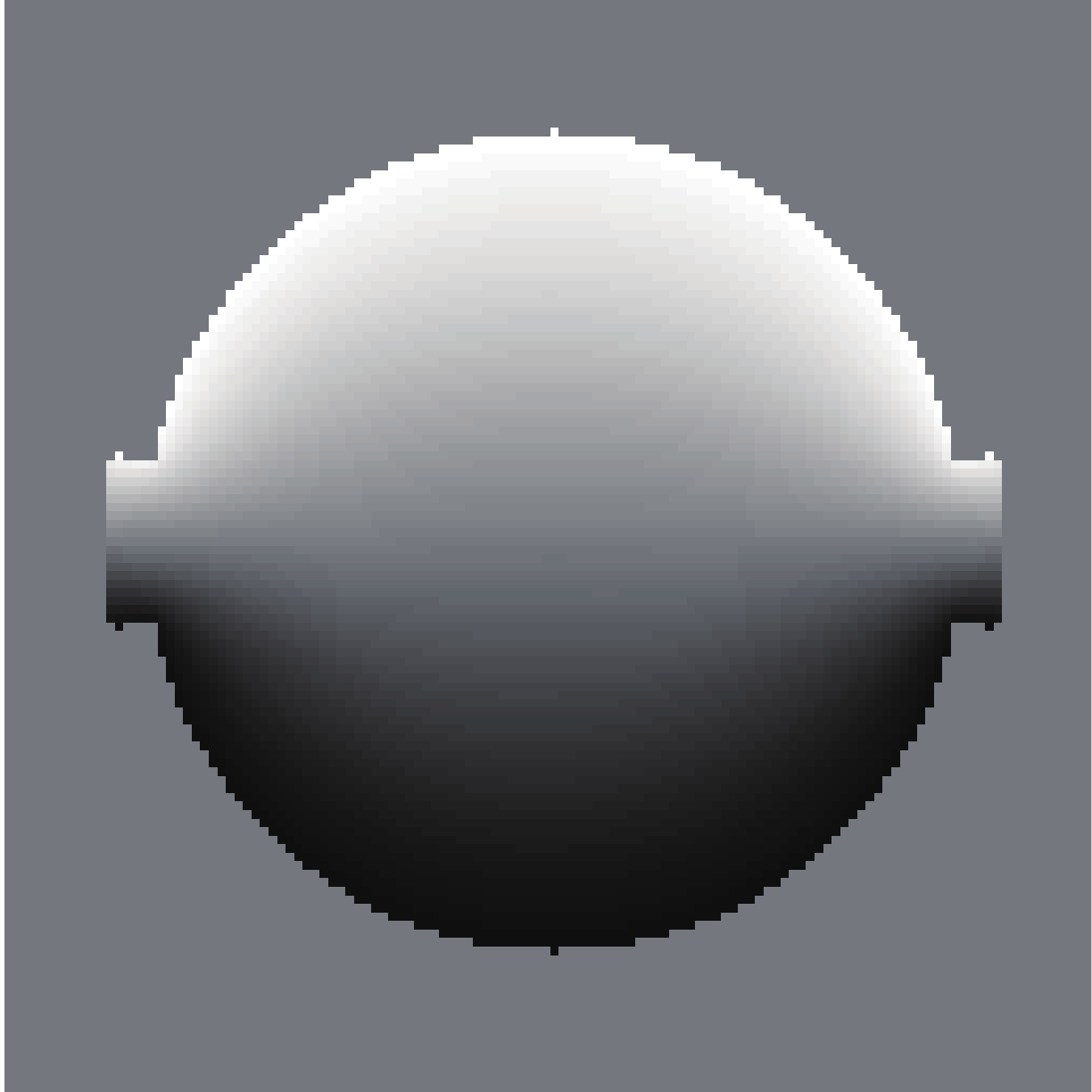}}
~\subfigure[]{
\includegraphics[width=4.5cm,height=3.7cm]{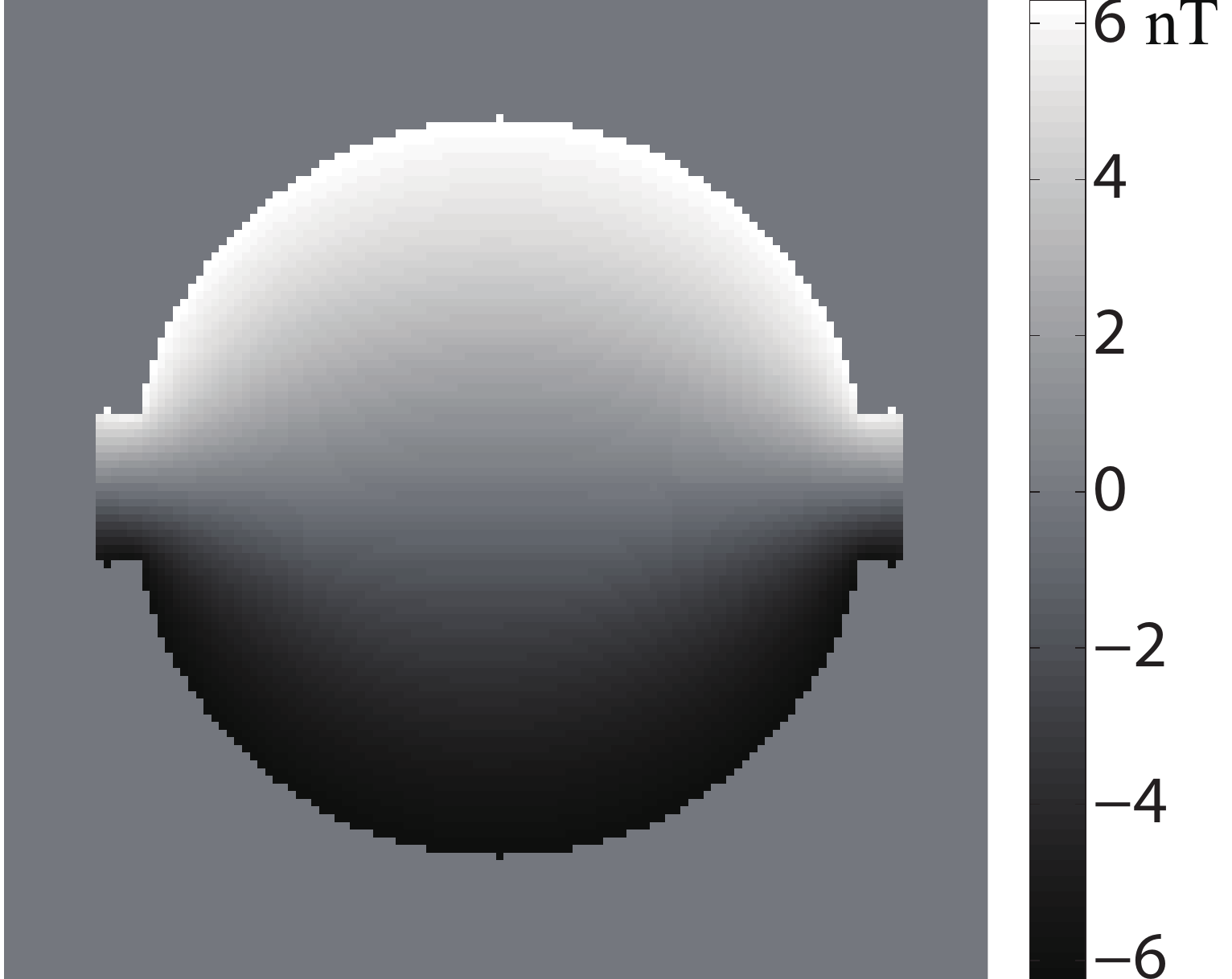}}
\caption{(a) $B_{z,2}$, (b) $\widehat B_{z,2}$.}
\label{fig:Shepp_Bz}
\end{figure}

\begin{figure}[h]
\onecolumn
\centering
\subfigure[]{
\includegraphics[width=3.7cm,height=3.7cm]{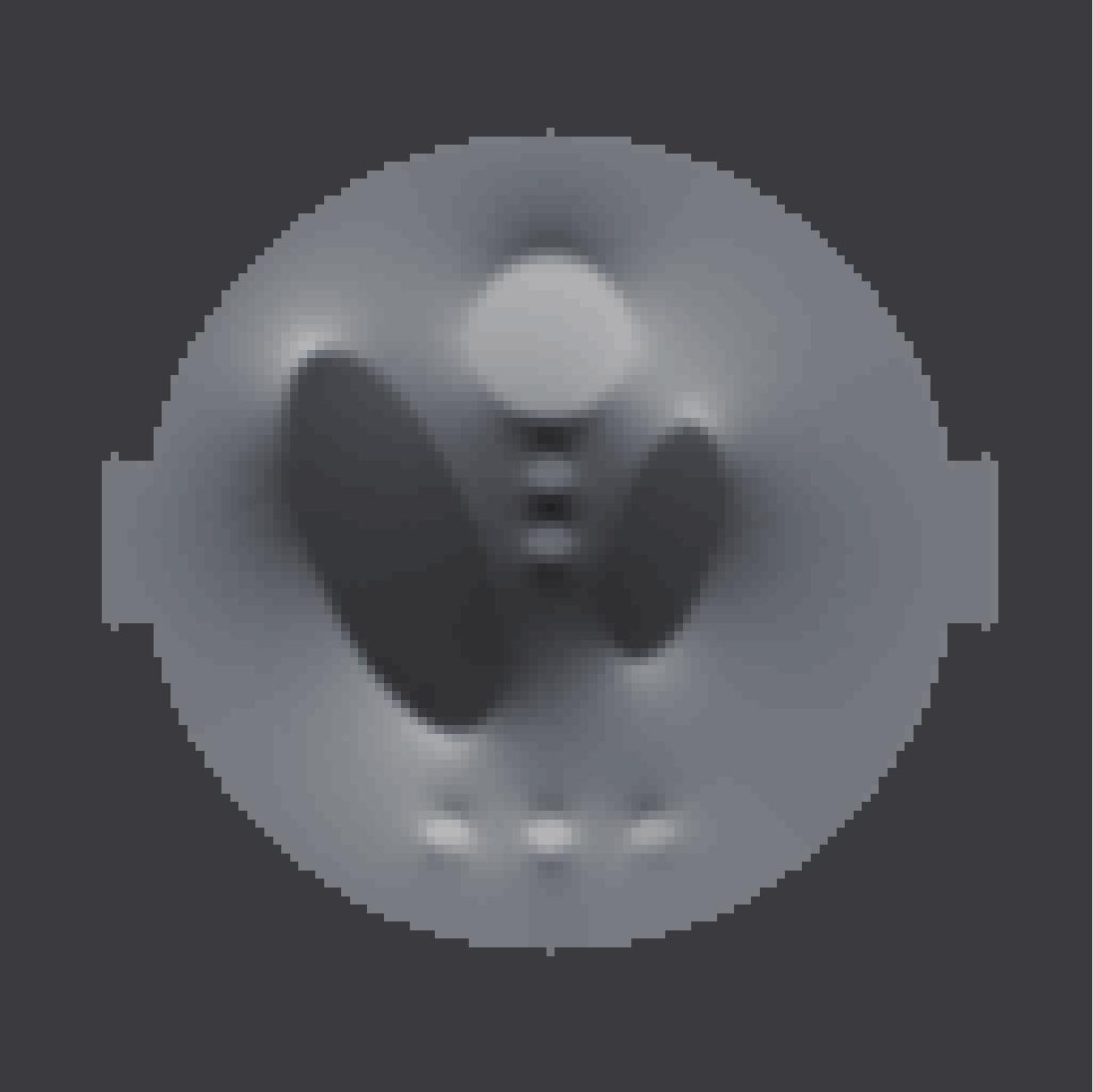}}
~\subfigure[]{
\includegraphics[width=3.7cm,height=3.7cm]{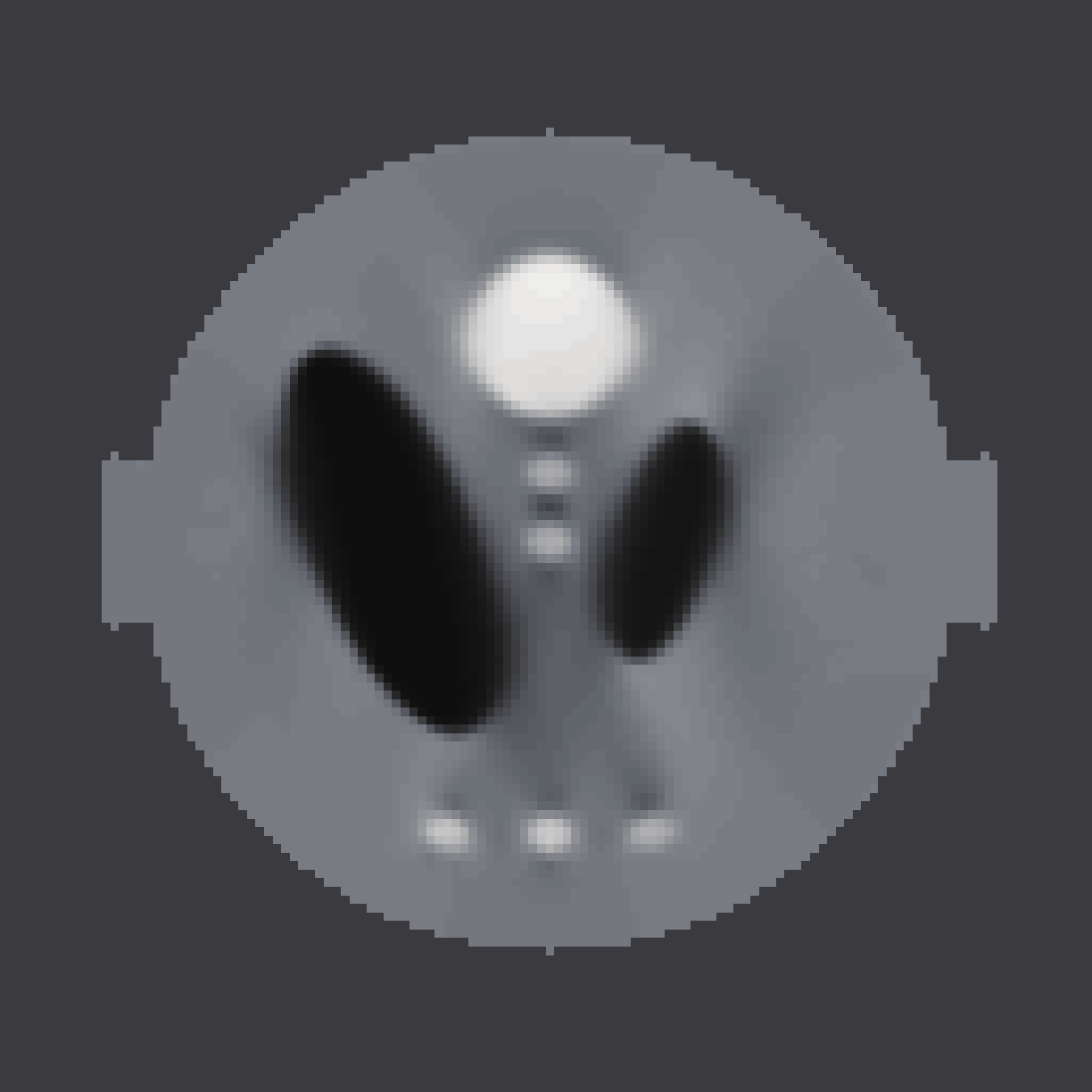}}
~\subfigure[]{
\includegraphics[width=4.5cm,height=3.7cm]{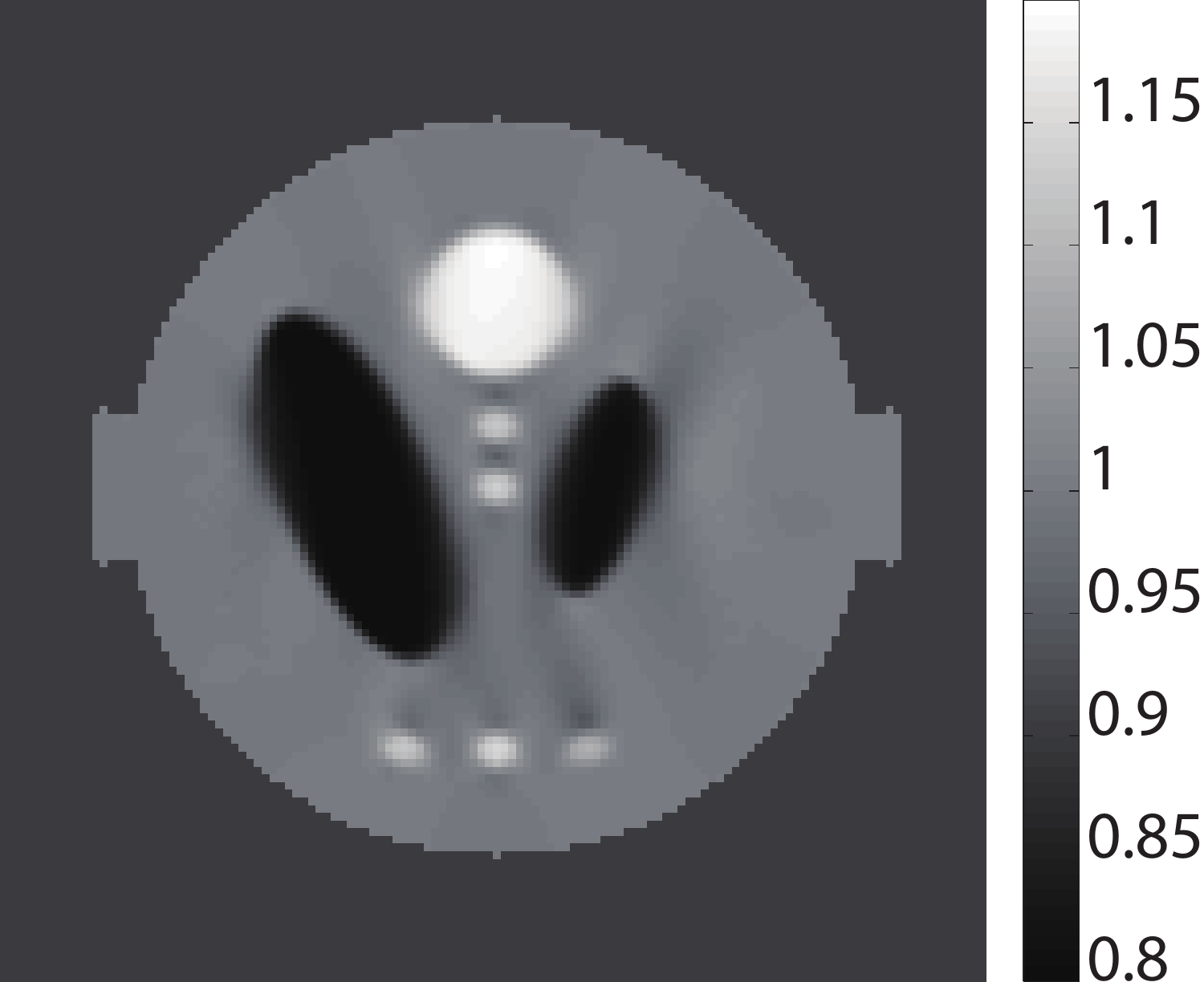}}
~\subfigure[]{
\includegraphics[width=3.7cm,height=3.7cm]{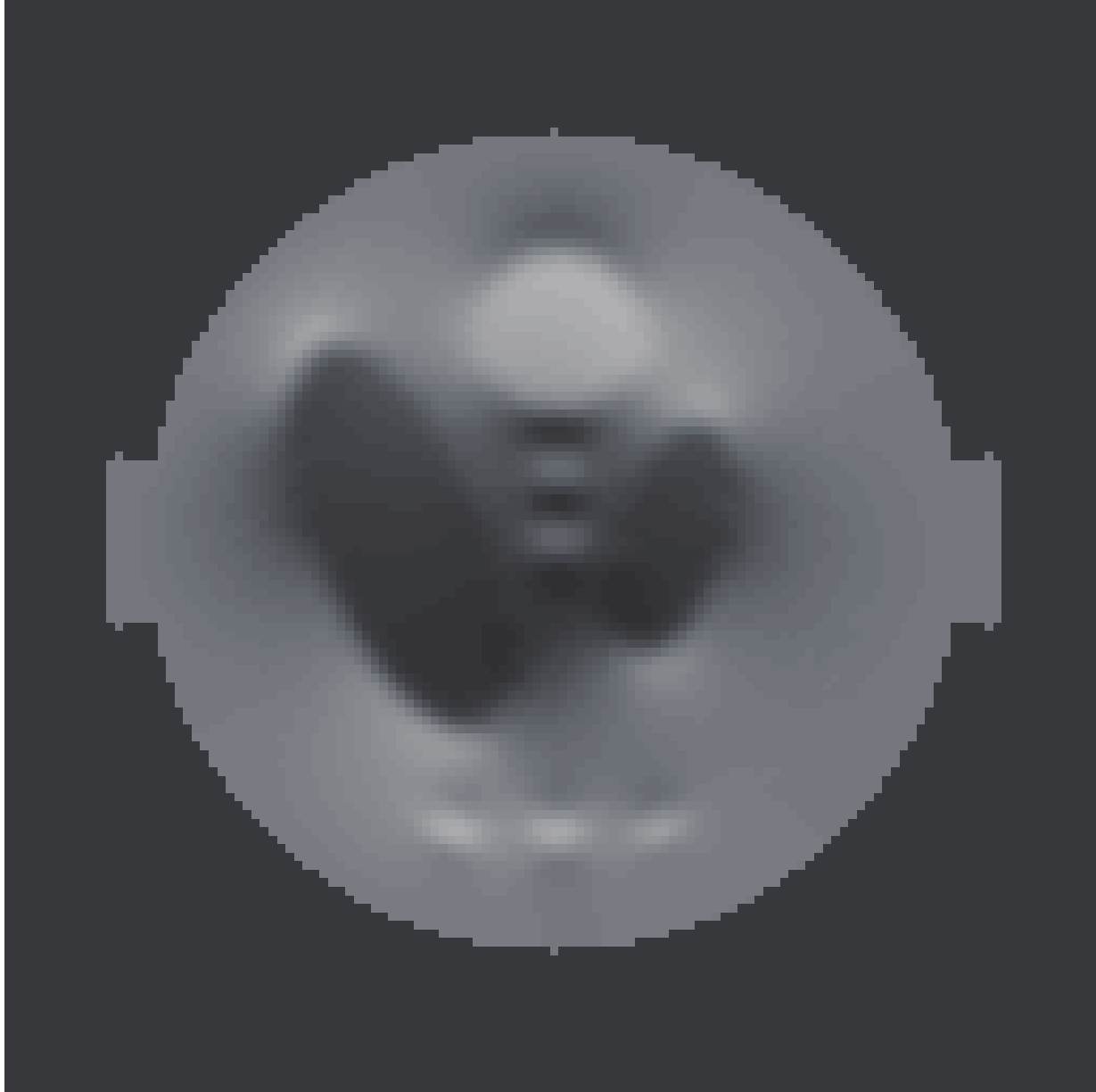}}
~\subfigure[]{
\includegraphics[width=3.7cm,height=3.7cm]{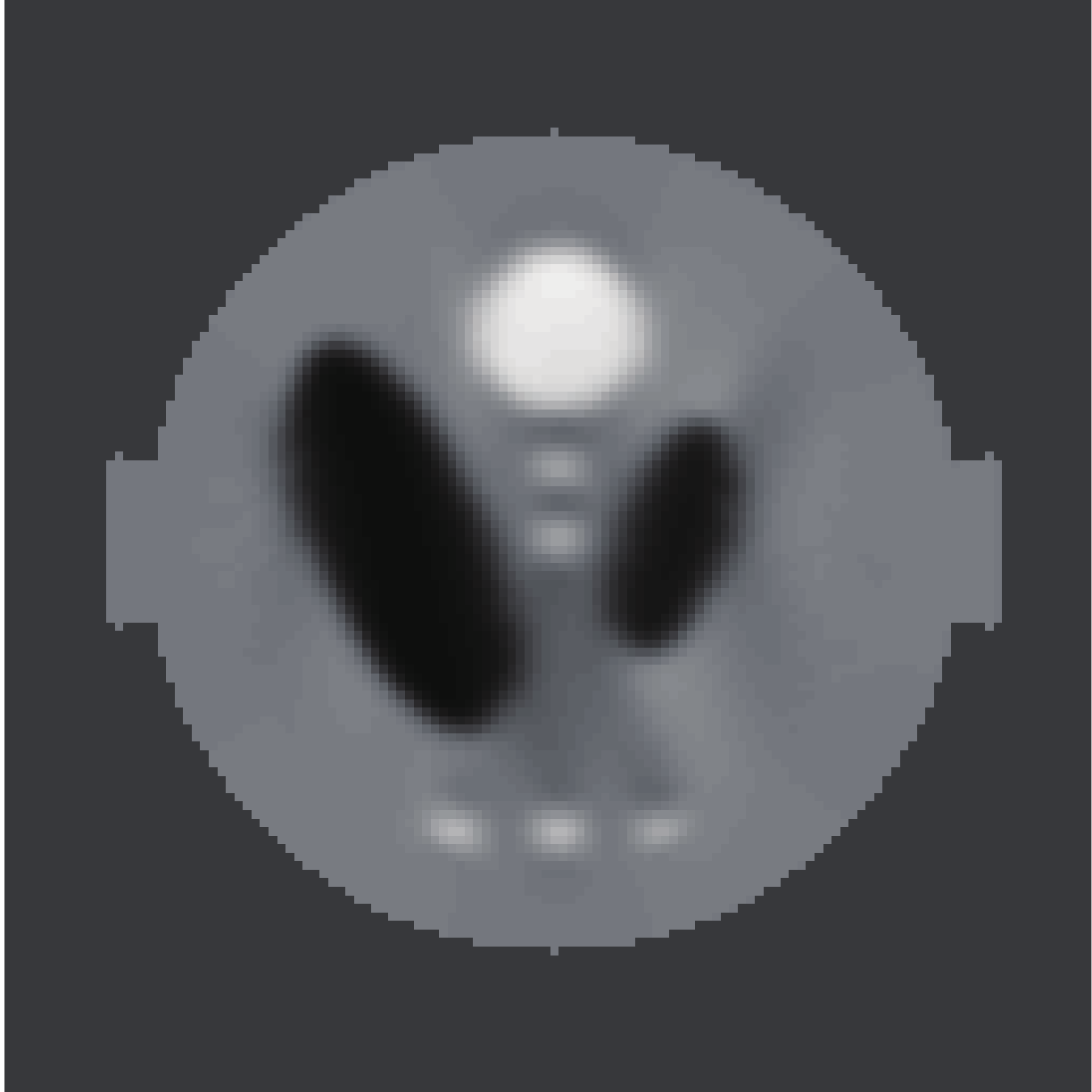}}
~\subfigure[]{
\includegraphics[width=4.5cm,height=3.7cm]{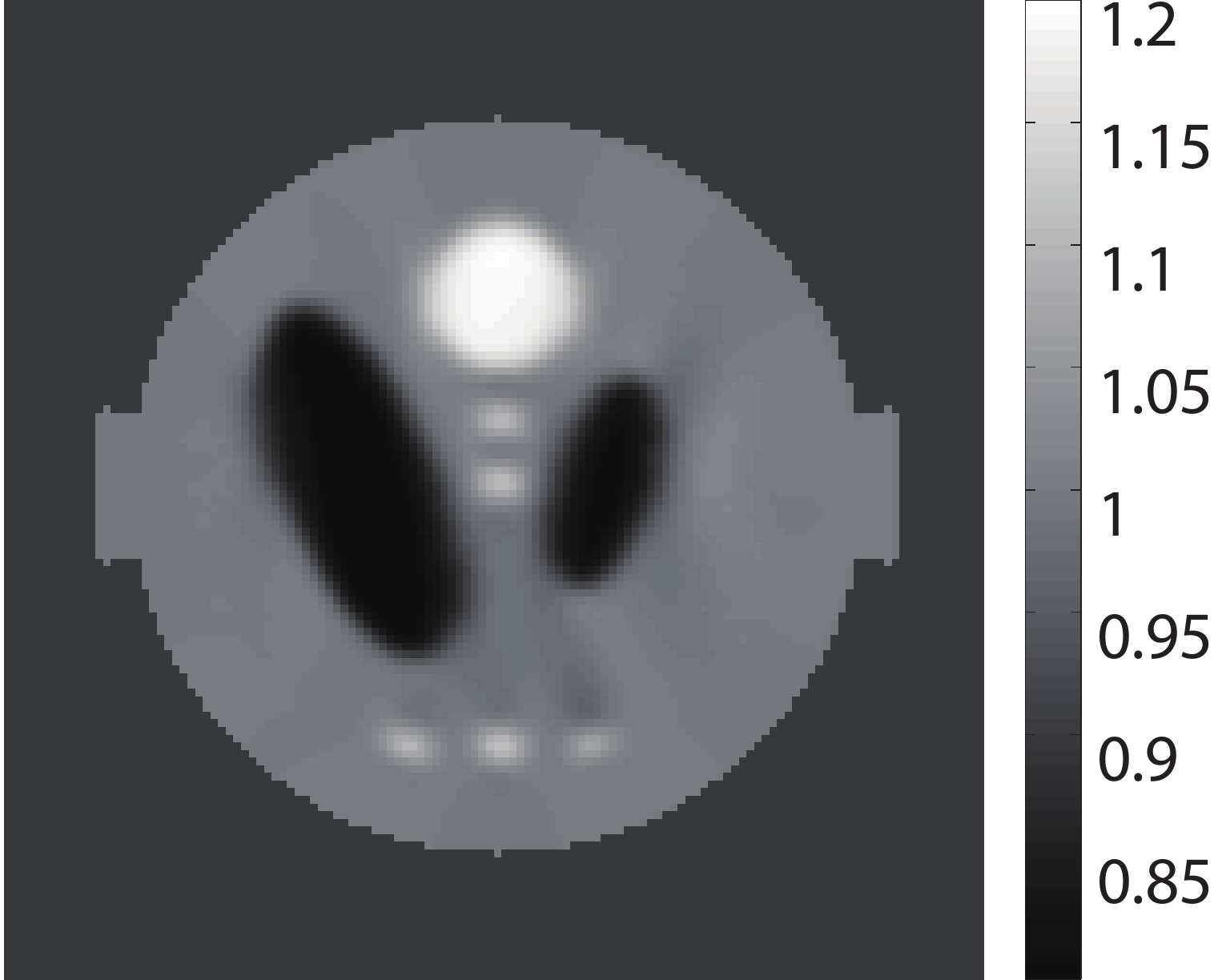}}
\caption{(a) $\sigma_2^1$, (b) $\sigma_2^{20}$, (c) $\sigma_2^{50}$, (d) $\widehat\sigma_2^1$, (e) $\widehat\sigma_2^{20}$ and (f) $\widehat\sigma_2^{50}$.}
\label{fig:Shepp-Logan_result}
\end{figure}

Asymptotic behaviors for $RE_2(n)$ and $\widehat{RE}_2(n)$ are shown for the Shepp-Logan phantom data in Figure \ref{Fig:convergence_shepp}. The values of $RE_2(n)$ and $\widehat{RE}_2(n)$ for $n=5,10,15,\cdots,50$ are also shown in Table \ref{table:RE}. We see that for $20$ and $50$ steps, the relative errors for the original conductivity distribution data are $RE_2(20) =0.1497$ and $RE_2(50)= 0.1448$ respectively, while those for the blurred distribution are $\widehat{RE}_2(20) =0.1038$ and $\widehat{RE}_2(50)= 0.0824$.
All notations used here have the same meaning as those in subsection \ref{sec:toy_simulation}.

\begin{figure}[h]
  \centering
  \includegraphics[width=10cm,height=5.0cm]{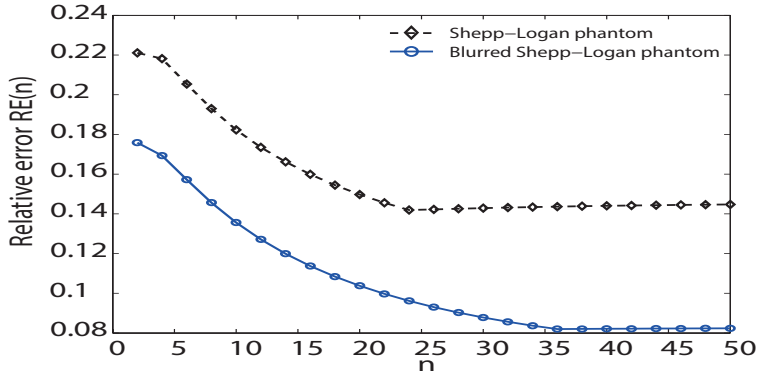}\\
  \caption{Asymptotic behaviors of $RE_2(n)$ and $\widehat{RE}_2(n)$ for $n\leq 50$. Plot legends are similar to those shown in Figure \ref{Fig:convergence_circle}.}\label{Fig:convergence_shepp}
\end{figure}

\subsection{A CT torso model}
In this subsection, we present simulation results using a realistic human torso model via the $640\times 640$ CT image shown in Figure \ref{fig:CT-torso_setup} (a), obtained from the {\it Cancer Imaging Archive} (https://www.cancerimagingarchive.net/). Here, we constructed the simulated conductivity distribution with reference to the CT image, even though there is no relation between the conductivity distribution and the CT image gray scale. We sub-sampled the CT image to a size of $128\times 128$ and attached a pair of electrodes with length of $0.03$. In Figure \ref{fig:CT-torso_setup} (b), we transformed the gray scale of the CT image into the range $[1,2]$ using the formula
\begin{equation*}\label{map_CT2cond}
  \sigma_3(\r) = \f{CT(\r)}{255}+1\qquad \mbox{for }\r\in \Om
\end{equation*}
and use it as the conductivity distribution in S/m, similar to the procedure used in \cite{Liu2010}.

Again, we imposed a blurring effect on the conductivity distribution shown in Figure \ref{fig:CT-torso_setup} (b) using the formula \eref{convolution} to obtain $\widehat \sigma_3$ for the purpose of $C^1$ smoothness. Here we used $\nu=1$ and used a convolution window size of $3\times 3$ pixels. The blurred conductivity image is shown in Figure \ref{fig:CT-torso_setup}(c).

\begin{figure}[h]
\onecolumn
\centering
\subfigure[]{
\includegraphics[width=3.7cm,height=3.7cm]{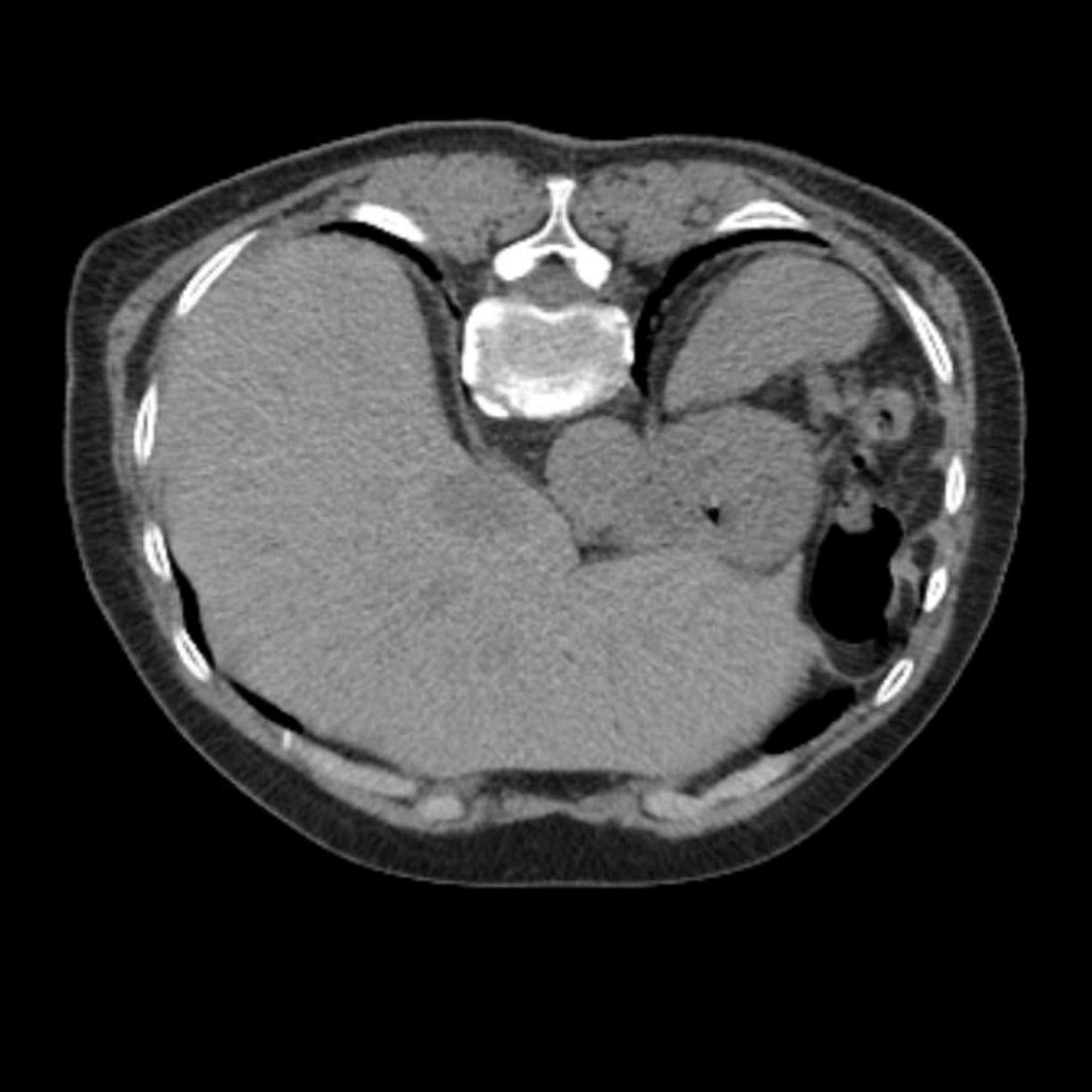}}
~\subfigure[]{
\includegraphics[width=3.7cm,height=3.7cm]{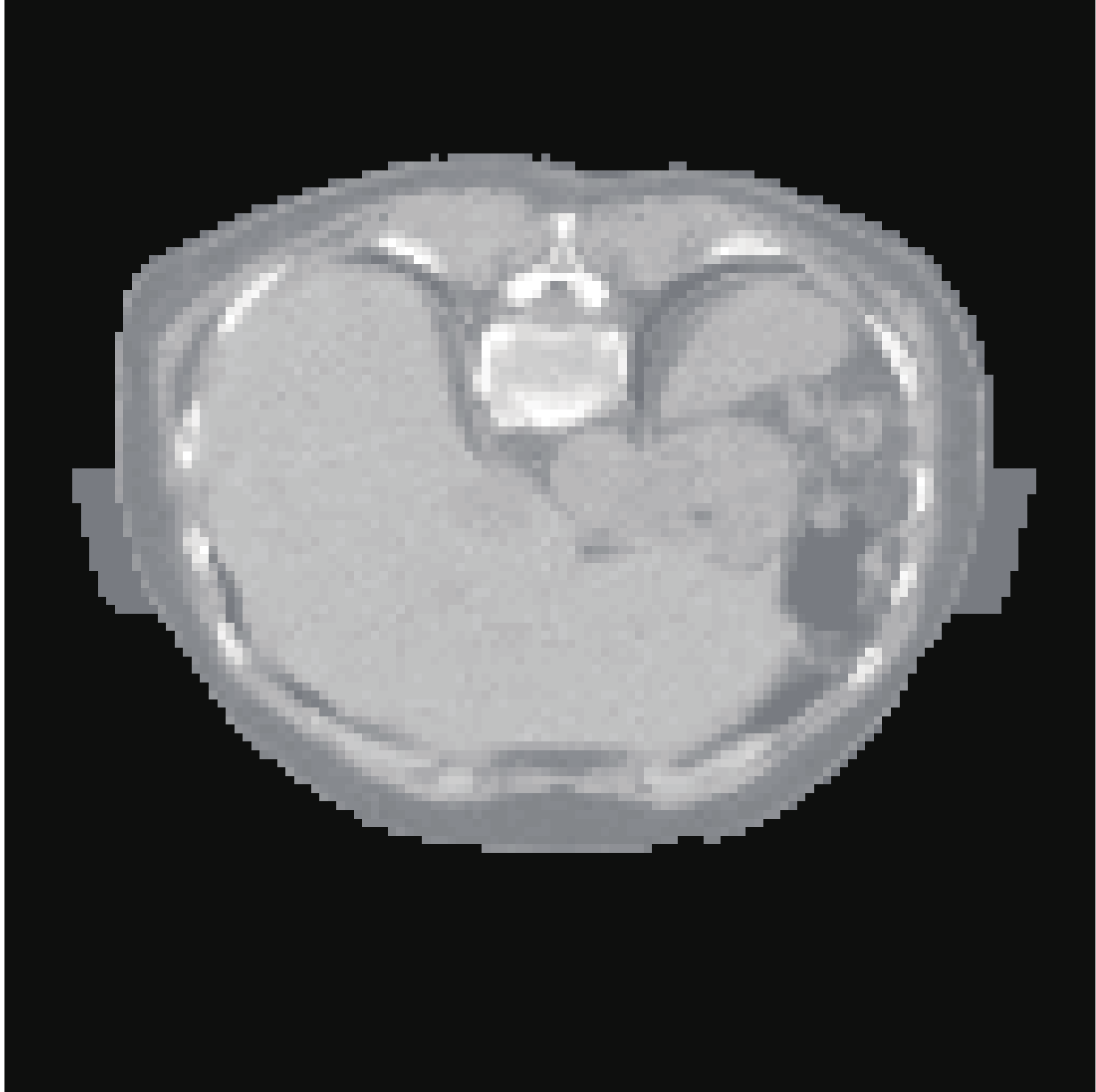}}
~\subfigure[]{
\includegraphics[width=4.5cm,height=3.7cm]{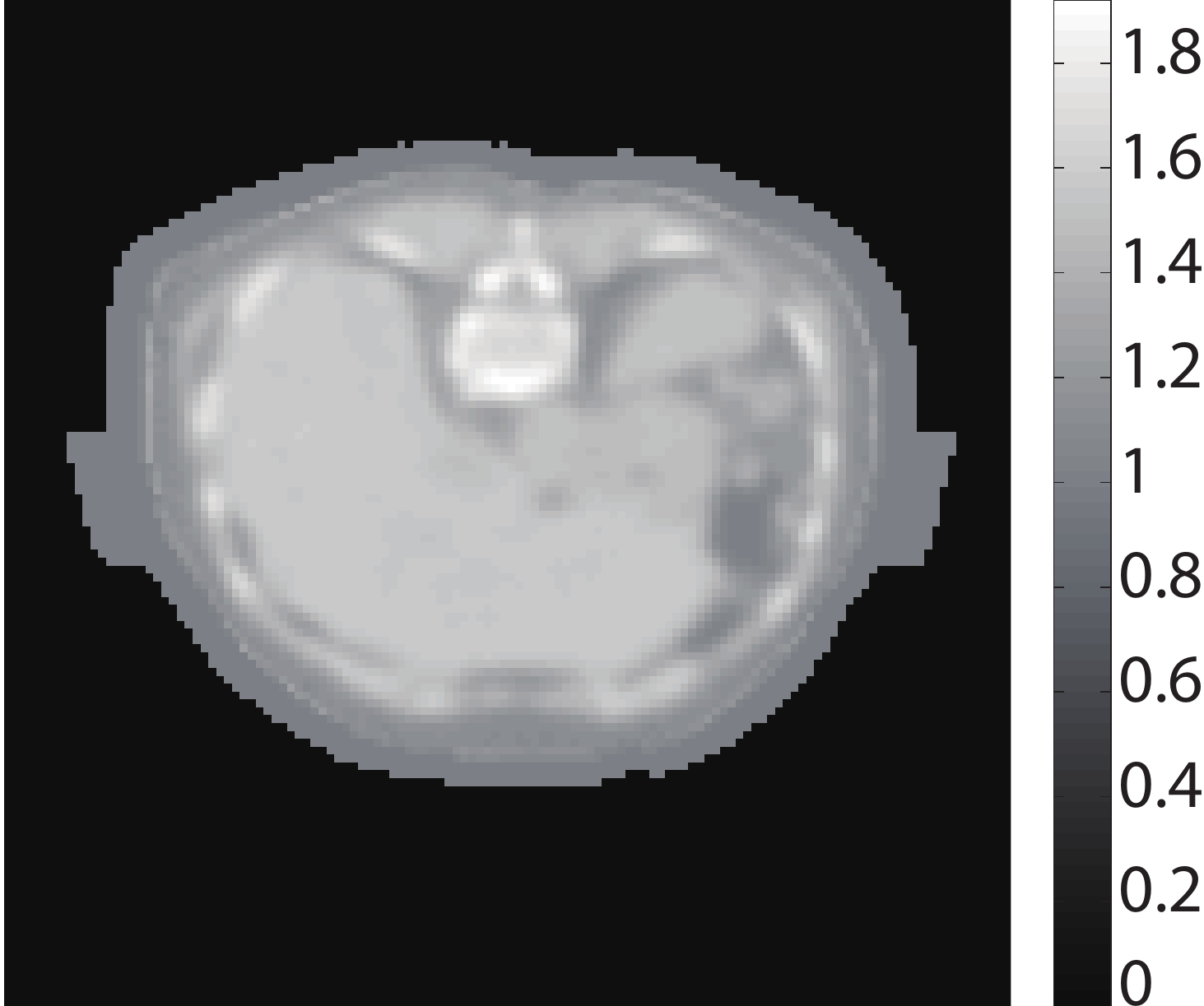}}
\caption{(a) CT image of human torso obtained from Cancer Imaging Archive, (b) $\sigma_3$ and (c) $\widehat \sigma_3$.}
\label{fig:CT-torso_setup}
\end{figure}

We again inject a current with amplitude of 10 mA through the electrodes, and the resulting images of $B_{z,3}$ and $\widehat B_{z,3}$ are shown in Figure \ref{fig:CT-torso_Bz} (a) and (b) for the conductivities $\sigma_3$ and $\widehat \sigma_3$, respectively.
Using these inversion input data, the reconstruction results $\sigma_3^n$ and $\widehat \sigma_3^n$ are shown in Figure \ref{fig:CT-torso_result} for $n=1,20$ and $50$, respectively.

\begin{figure}[h]
\onecolumn
\centering
\subfigure[]{
\includegraphics[width=3.7cm,height=3.7cm]{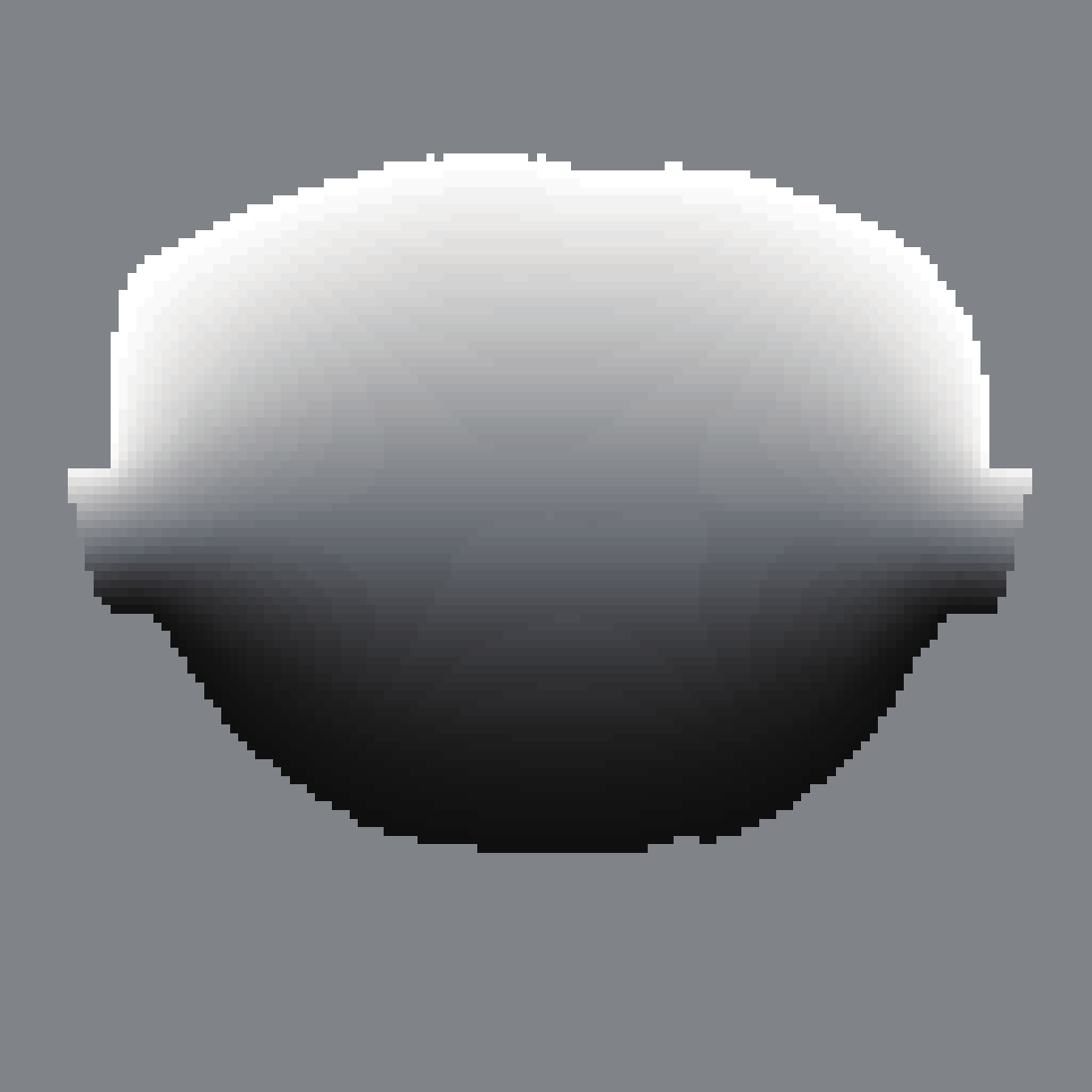}}
~\subfigure[]{
\includegraphics[width=4.5cm,height=3.7cm]{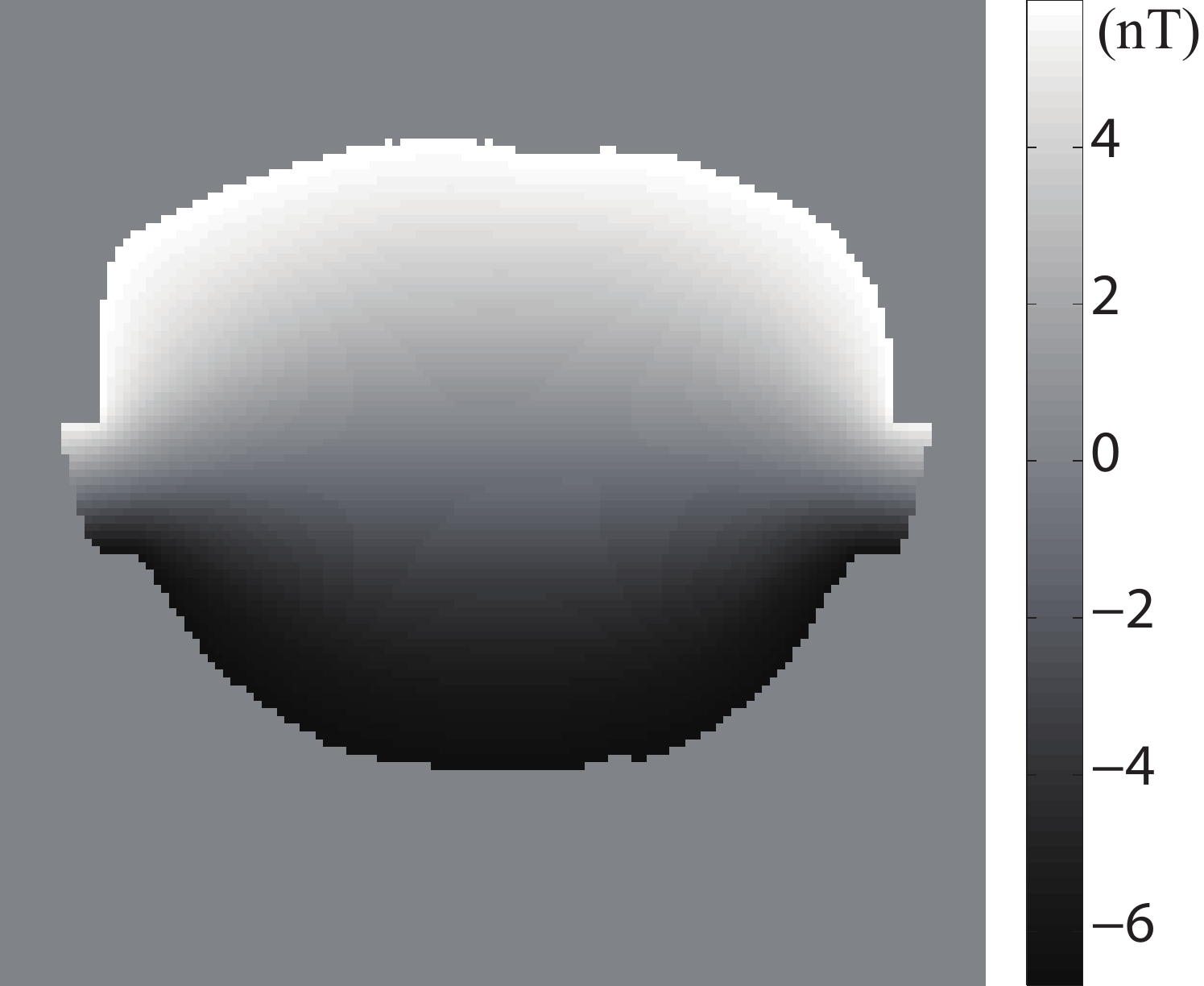}}
\caption{(a) $B_{z,3}$, (b) $\widehat B_{z,3}$.}
\label{fig:CT-torso_Bz}
\end{figure}

\begin{figure}[h]
\onecolumn
\centering
\subfigure[]{
\includegraphics[width=3.7cm,height=3.7cm]{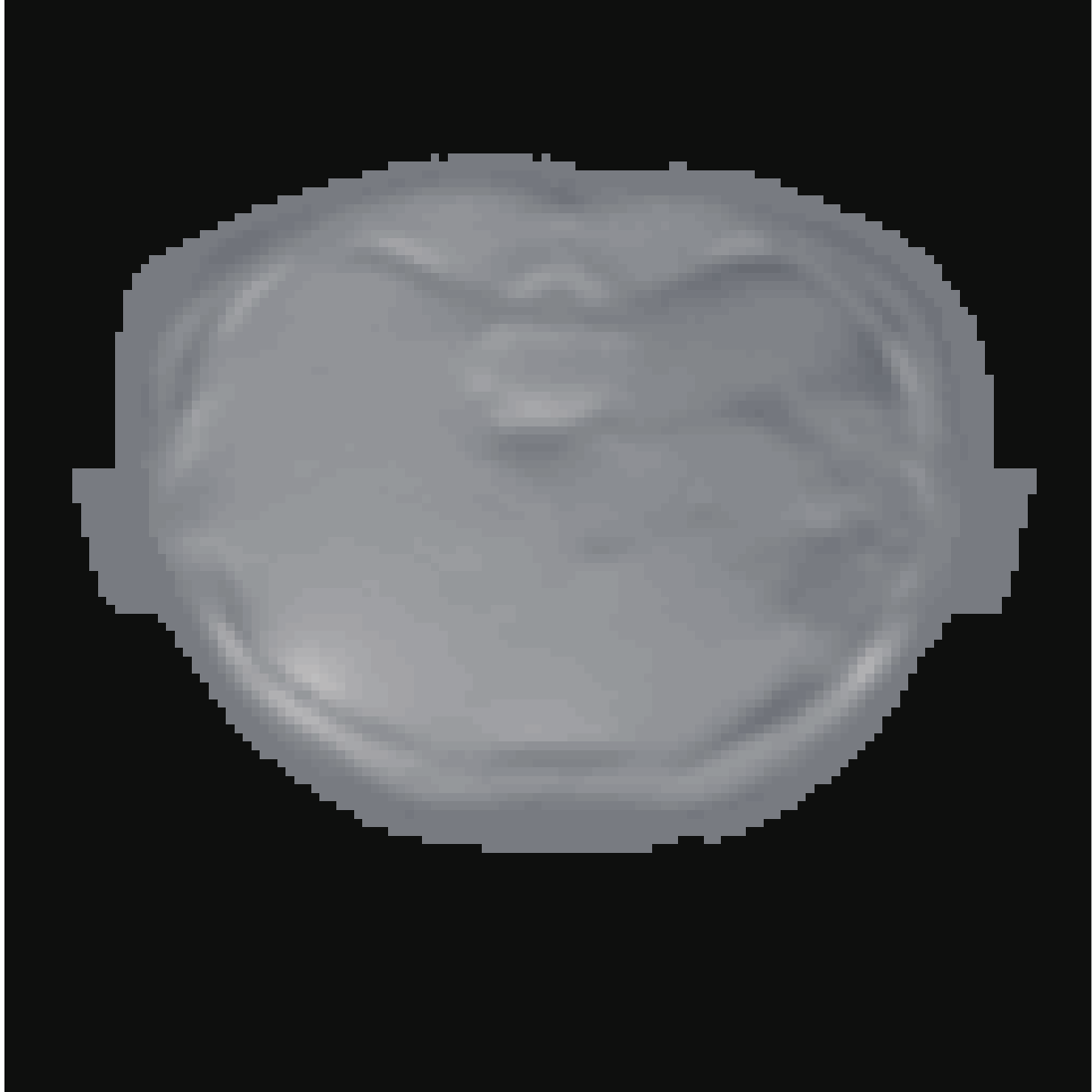}}
~\subfigure[]{
\includegraphics[width=3.7cm,height=3.7cm]{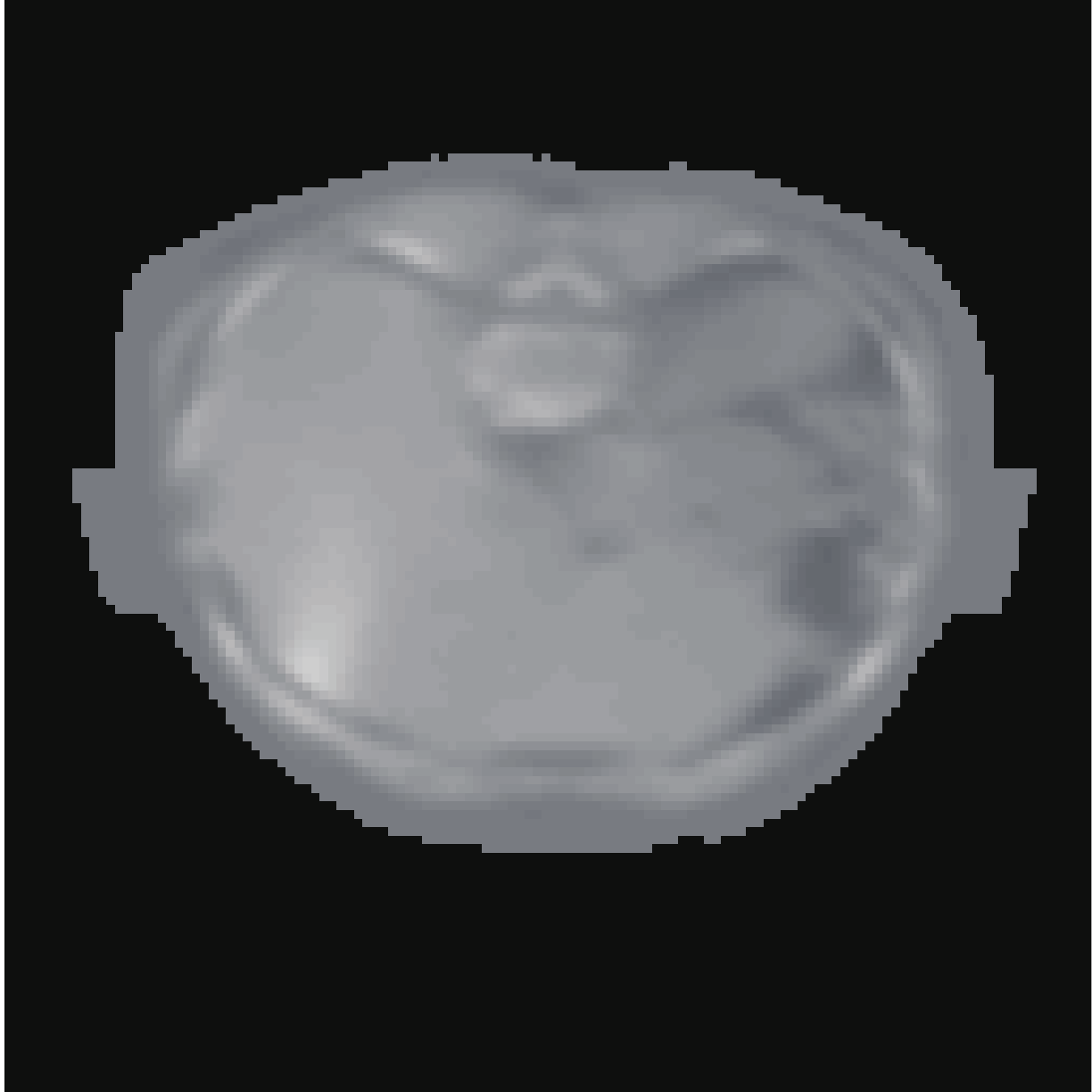}}
~\subfigure[]{
\includegraphics[width=4.5cm,height=3.7cm]{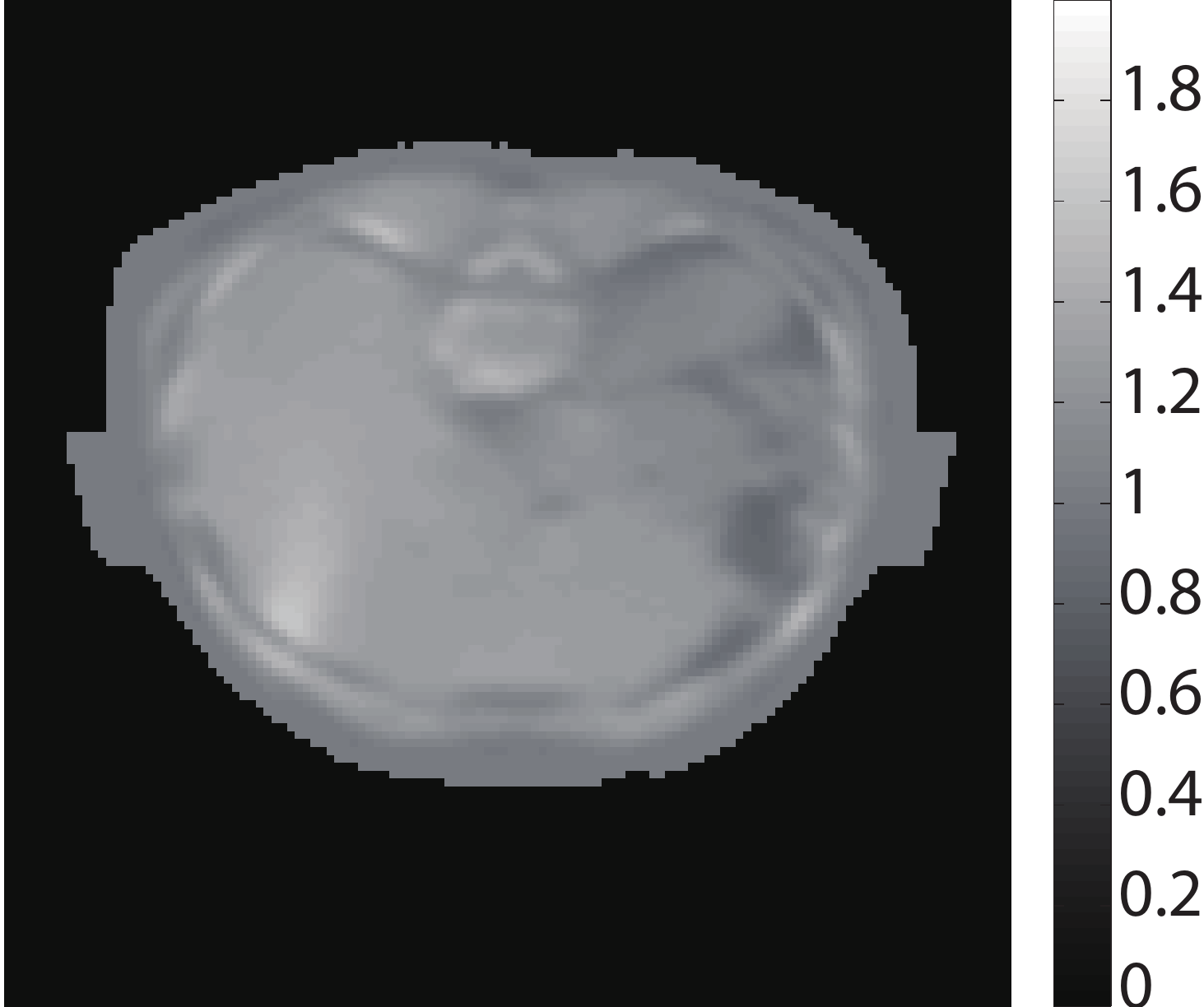}}
~\subfigure[]{
\includegraphics[width=3.7cm,height=3.7cm]{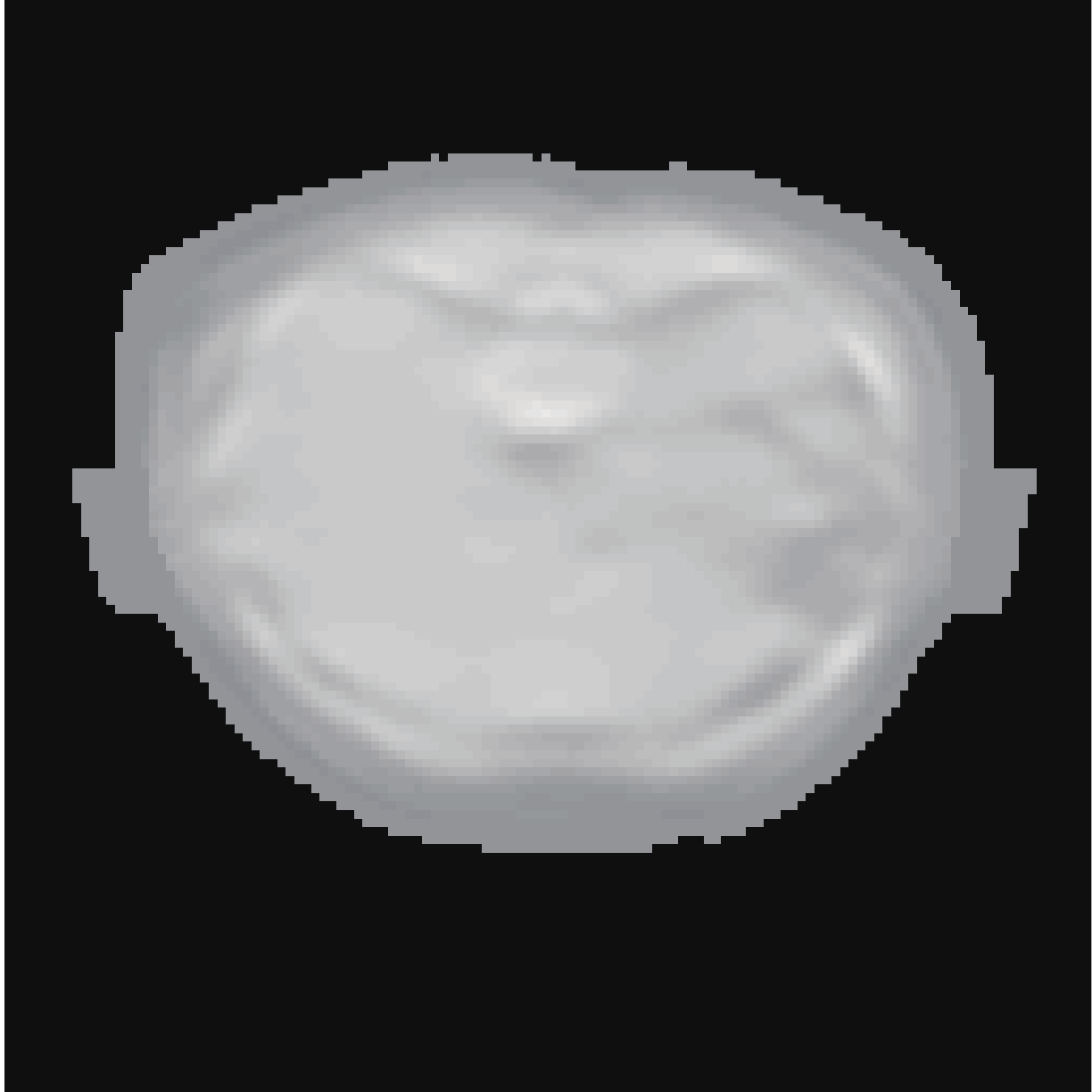}}
~\subfigure[]{
\includegraphics[width=3.7cm,height=3.7cm]{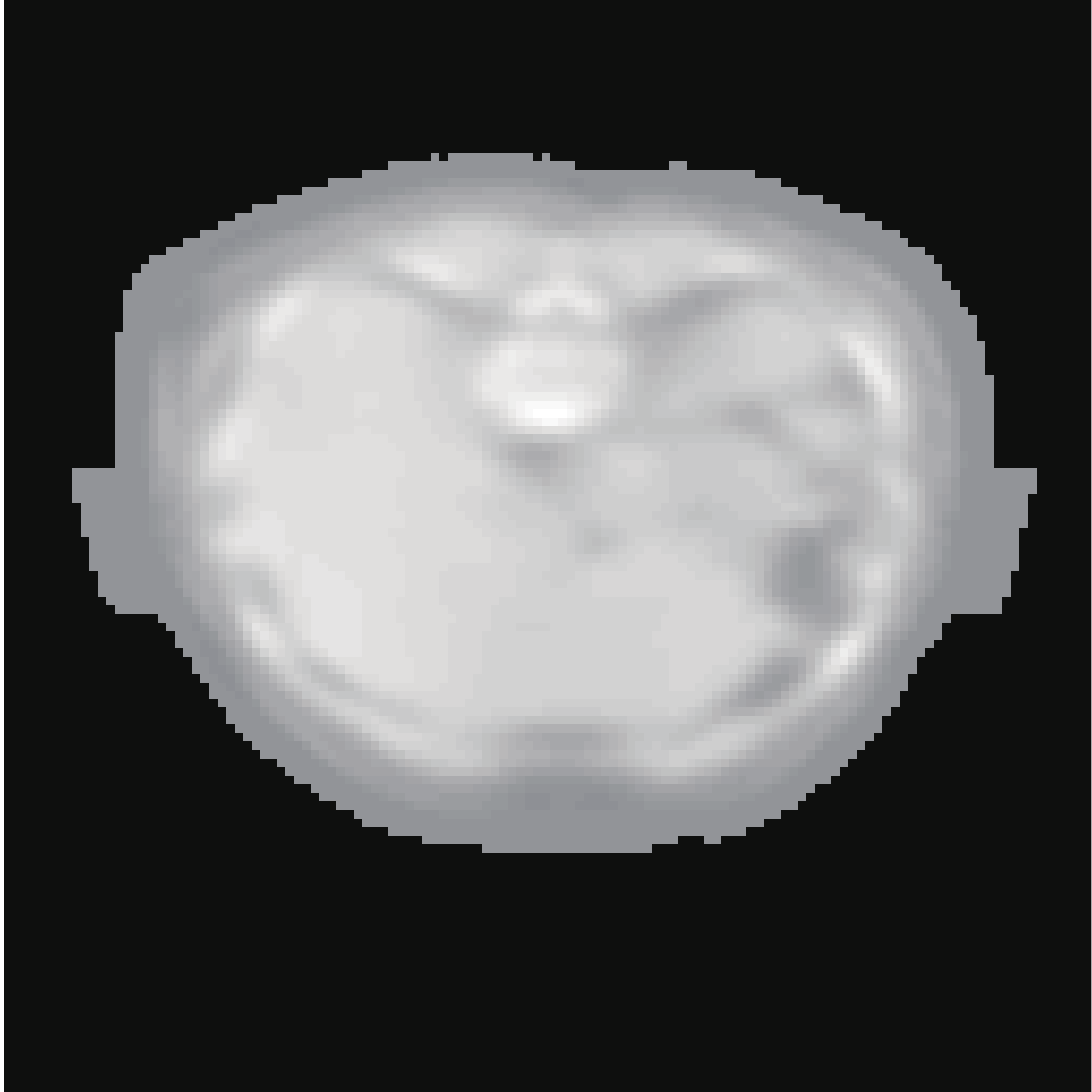}}
~\subfigure[]{
\includegraphics[width=4.5cm,height=3.7cm]{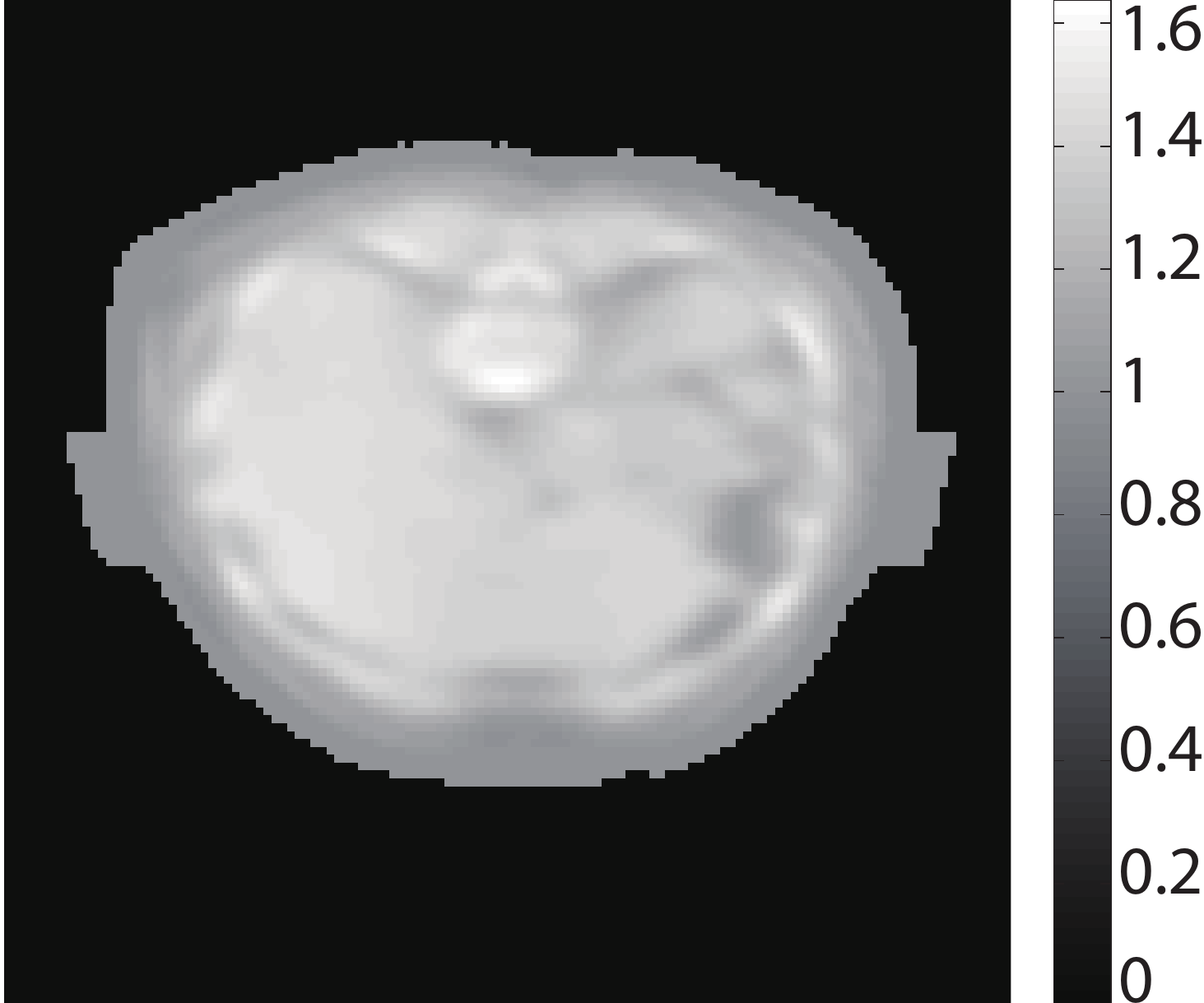}}
\caption{(a) $\sigma_3^1$, (b) $\sigma_3^{20}$, (c) $\sigma_3^{50}$, (d) $\widehat\sigma_3^1$, (e) $\widehat\sigma_3^{20}$ and (f) $\widehat\sigma_3^{50}$.}
\label{fig:CT-torso_result}
\end{figure}

The convergence behaviors for the torso model are shown in Figure \ref{Fig:convergence_CT}. Note that in this example the  relative error $RE_3(n)$ is a zigzag function, showing that convergence fails for this case. The quantitative results for $RE_3(n)$ are illustrated in Table \ref{table:RE} for $n=5,10,\cdots,50$. We believe that the main reason for this is the fact that in this example $\|\na\ln\sigma_3\|_{C^1(\Om)}$ is not small any more. If we set the conductivity to be $\widehat \sigma_3$, we obtain a much better convergence behavior, shown in the solid line with circle markers in Figure \ref{Fig:convergence_CT}. For the quantitative results for $\widehat{RE}_3(n)$ please refer to Table \ref{table:RE}.

\begin{figure}[h]
  \centering
  \includegraphics[width=10cm,height=5cm]{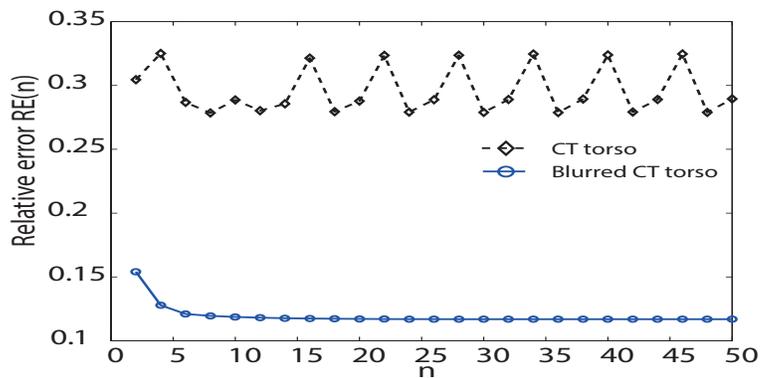}\\
  \caption{Asymptotic behaviors of $RE_3(n)$ and $\widehat{RE}_3(n)$ for $n\leq 50$.}\label{Fig:convergence_CT}
\end{figure}

\begin{table*}[h]
  \centering
  \caption{Values of $RE(n)$ for each model with $\sigma_i$ and $\widehat \sigma_i$ ($i=1,2,3$) and $n=5,10,\cdots,50$.}
\label{table:RE}
  \begin{tabular}{c|c|c|c|c|c|c}

    \hline
    \hline
    Models & \multicolumn{2}{c|}{Circle} & \multicolumn{2}{c|}{Shepp-Logan} & \multicolumn{2}{c}{CT torso} \\
    \hline
    Steps ($n$) & $RE_1(n)$ & $\widehat{RE}_1(n)$ & $RE_2(n)$ & $\widehat{RE}_2(n)$  & $RE_3(n)$ & $\widehat{RE}_3(n)$  \\
    \hline
    5 & 0.0422  & 0.0274  &  0.2120 & 0.1634  & 0.2503  & 0.1224  \\
    \hline
    10 & 0.0393  & 0.0248  & 0.1823  & 0.1356  & 0.2886  & 0.1187  \\
    \hline
    15 & 0.0391  & 0.0244  &  0.1630 & 0.1167  & 0.2259  & 0.1176  \\
    \hline
    20 & 0.0390  & 0.0243  &  0.1497 & 0.1038  & 0.2877  & 0.1172  \\
    \hline
    25 & 0.0390  & 0.0243  & 0.1421  & 0.0946  & 0.3143  & 0.1170  \\
    \hline
    30 & 0.0390  & 0.0243  & 0.1429  & 0.0878  & 0.2789  & 0.1169  \\
    \hline
    35 & 0.0390  & 0.0243  & 0.1435  & 0.0828  & 0.2800  & 0.1169  \\
    \hline
    40 & 0.0390  & 0.0243  & 0.1441  & 0.0821  & 0.3239  & 0.1169  \\
    \hline
    45 & 0.0390  & 0.0243  & 0.1445  & 0.0823  & 0.2256  & 0.1169  \\
    \hline
    50 & 0.0390  & 0.0243  & 0.1448  & 0.0824  & 0.2895  & 0.1169  \\
    \hline
    \hline

  \end{tabular}
\end{table*}

\section{Concluding remarks}

Reducing scanning time, improving signal-to-noise ratio in the $B_z$ data and developing practical methods of mapping current density and conductivity distributions in transcranial electrical stimulation applications are key issues for MREIT. Without adding prior knowledge of the unknown conductivity, a unique determination cannot in general be guaranteed  \cite{Kim2003}. However, by imposing a known boundary conductivity, we can uniquely reconstruct the conductivity distribution, as stated in Theorem \ref{thm:uniqueness} (see also \cite{ParkLeeKwon2007a}). In this paper, we reproved the uniqueness theorem using the theory of first-order hyperbolic PDEs. As a consequence, it is possible to develop reconstruction algorithms based on single current injection. Single-current reconstruction algorithms appear to have promise as a means of reducing MREIT scanning time to at least half that of previous two-current-based algorithms. It should streamline inclusion of MREIT procedures into existing transcranial electrical stimulation protocols using functional MRI and aid in understanding the mechanisms of action of these techniques.

In \cite{SongSadleir2020}, we developed an iterative reconstruction algorithm, the single current harmonic $B_z$ algorithm, to reconstruct the conductivity from a single current administration. In this paper, we provided a strict mathematical analysis of the convergence behavior of the single current harmonic $B_z$ algorithm. The main result of this paper, is to show that if the $C^1$ norm of the unknown conductivity is sufficiently small, the sequence $\{\ln\sigma^n\}$ converges to the true value $\ln\sigma^*$ in the sense of $C^1$. Through numerical experiments we showed that if $\|\na\ln\sigma\|$ is not small, convergence may not occur. However, even with a one step reconstruction we can still obtain the correct geometry of the internal structure and a correct local contrast of the unknown $\sigma$ which is similar to that found in electrical impedance tomography \cite{Harrach2010}. However, the strict theory for this observation is yet to be proven.

As well as reducing scanning time, another advantage of one-current based MREIT reconstruction algorithms lies in the fact that it is possible to analyze the stability and achievable  resolution of the reconstruction algorithm \cite{SeoWoo2009}. To be precise, we do not need to estimate the lower bound of the area of the parallelogram formed by two linearly independent current densities $\J^1$ and $\J^2$ due to two current injections. The only necessity is the lower bound of $\J$, which is guaranteed in the two-dimensional case by Corollary \ref{corollary:uniform_lower_bound} and \cite{Alessandrini2004}. A strict mathematical analysis of the stability and achievable resolution of single-current reconstruction methods are clearly needed in future research.

In spite of the fact that in 2D cases the current density $\J$ is non-zero, in real situations it could be very close to zero in local region such as bubble, bone and airways. In these regions, the term $\f{1}{|\J|^2}$ could amplify the noise in $B_z$. To solve this problem, an appropriate regularization involving the {\it a priori} information about the unknown conductivity and current streamlines should be developed to improve the quality of the reconstructed image. If these approaches prove successful in numerical and phantom testing animal and human experiments will be performed to further verify the proposed algorithm and the convergence theory.

\end{document}